\newtheorem{theorem}{Theorem}[subsection]
\newtheorem{proposition}[theorem]{Proposition}
\newtheorem{corollary}[theorem]{Corollary}
\theoremstyle{definition}
\newtheorem{definition}[theorem]{Definition}
\newtheorem{example}[theorem]{Example}
\numberwithin{equation}{subsection}
\numberwithin{table}{subsection}
\newcommand{\C}{{\mathbb C}}
\newcommand{\Z}{{\mathbb Z}}
\newcommand{\Ext}{\operatorname{Ext}}
\newcommand{\0}{\bar 0}
\newcommand{\1}{\bar 1}
\newcommand{\g}{\ensuremath{\mathfrak{g}}}
\newcommand{\e}{\ensuremath{\mathfrak{e}}}
\newcommand{\f}{\ensuremath{\mathfrak{f}}}
\newcommand{\res}{\ensuremath{\operatorname{res}}}
\begin{document}

\title[On BBW Parabolics for Simple Classical Lie Superalgebras]
{\bf On BBW Parabolics for Simple Classical Lie Superalgebras}

\author{\sc Dimitar Grantcharov}
\address
{University of Texas at Arlington  \\
Department of Mathematics               \\
Arlington\\ TX~76019}
\thanks{Research of the first author was supported in part by Simons Collaboration Grant 358245.}
\email{grandim@uta.edu}

\author{\sc Nikolay Grantcharov}
\address
{Department of Mathematics \\
University of Chicago   \\
Chicago \\ IL~60637}
\thanks{Research of the second author was supported in part by NSF Grant DGE-1746045.}

\email{nikolayg@uchicago.edu}

\author{\sc Daniel K. Nakano}
\address
{Department of Mathematics\\ University of Georgia \\
Athens\\ GA~30602, USA}
\thanks{Research of the third author was supported in part by NSF
grant  DMS-1701768. The material is based upon work supported by NSF grant 
DMS-1440140 while the third author was in residence at MSRI during Spring 2018.} 

\email{nakano@uga.edu}

\author{\sc Jerry Wu}
\address 
{Department of Mathematics \\
University of California, Berkeley   \\
Berkeley \\ CA~94720-3840  }

\email{jerryxzwu@gmail.com}

\subjclass[2010]{Primary 17B56, 17B10; Secondary 13A50}

\keywords{Lie superalgebras, support varieties, sheaf cohomology.}
\date\today

\maketitle

\begin{abstract} In this paper the authors introduce a class of parabolic subalgebras for classical simple Lie superalgebras associated to the detecting subalgebras 
introduced by Boe, Kujawa and Nakano. These parabolic subalgebras are shown to have good cohomological properties governed 
by the Bott-Borel-Weil theorem involving the zero component of the Lie superalgebra in conjunction with the odd roots. These results are later used to 
verify an open conjecture given by Boe, Kujawa and Nakano pertaining to the equality of various support varieties. 
\end{abstract} 
\vskip 1cm 

\section{Introduction}

\subsection{} Let ${\mathfrak g}$ be a classical simple Lie superalgebra over ${\mathbb C}$ and $G$ be the corresponding supergroup (scheme) with 
$\text{Lie }G={\mathfrak g}$. Given a parabolic subgroup scheme $P$, a major open question has been to compute the higher 
sheaf cohomology group $R^{j}\text{ind}_{P}^{G} N$ for $j\geq 0$ where $N$ is a finite-dimensional $P$-module. General theory on this topic can be found in \cite{Zubkov}, and some computations for Lie superalgebras 
such as $\mathfrak{gl}(m|n)$, $\mathfrak{osp}(m|2n)$, and $\mathfrak{q}(n)$ are presented in \cite{Zubkov,GrS1,GrS2,P,PS,S1,S2}. For reductive algebraic groups, when 
$P$ is a Borel subgroup and $N$ is a one-dimensional module, the answer is given by the classical Bott-Borel-Weil (BBW) theorem. 

In this paper we introduce parabolic subsupergroups $P=B$ such that the higher sheaf cohomology $R^{j}\text{ind}_{B}^{G} (-)$ can be computed 
using data from the BBW theorem. These subgroups are obtained by using the detecting subalgebras via the 
stable action of $G_{\0}$ on ${\mathfrak g}_{\1}$. The striking features about these subalgebras is the interplay between the even roots and 
the odd roots with their associated finite reflection groups, and the fact that our approach allows for a uniform treatment of all classical simple Lie superalgebras. 
In particular as a byproduct of our work, we obtain an important computation of the higher sheaf cohomology groups of $G/B$ for the trivial line bundle: ${\mathcal H}^{j}(G/B,{\mathcal L}(0)):=
R^{j}\text{ind}_{B}^{G} {\mathbb C}$ for $j\geq 0$ (cf. Theorem~\ref{T:Poincareseriesequal}). For classical simple Lie superalgebras other than ${\mathfrak p}(n)$, it is shown that the polynomial 
$p_{G,B}(t)=\sum_{i=0}^{\infty} \dim R^{i}\text{ind}_{B}^{G} {\mathbb C}\ t^{i}$ is equal to a Poincar\'e polynomial for a finite reflection group $W_{\1}$ specialized at a power of $t$. 
This indicates that the combinatorics of the length function on $W_{\1}$ plays in important role in this setting, and opens the possibilities for developing a 
general theory involving these parabolic subsupergroups. 

\subsection{} For finite groups it is well-known that the cohomology is detected on the collection of elementary abelian $p$-subgroups. 
Moreover, Quillen \cite{quillen1,quillen2} showed that these subgroups can be used to describe the spectrum of the cohomology ring. Later 
Avrunin and Scott \cite{AS} demonstrated that the support varieties for finite groups consist of taking unions of support 
varieties for elementary abelian subgroup whose varieties can be described using rank varieties. 

In the study of classical simple Lie superalgebras, Boe, Kujawa and Nakano \cite{BKN1} used invariant theory for reductive groups 
to show that there are natural classes of ``subalgebras'' that detect the cohomology. These subalgebras come in one of two families: ${\mathfrak f}$ (when 
$\g$ is stable) and ${\mathfrak e}$ (when $\g$ is polar). In all cases, $\g$ admits a stable action and in most cases $\g$ admits a polar action 
(cf \cite[Table 5]{BKN1}). 

In this situation, the restriction maps induce isomorphisms:\footnote{There are some errors in the statements in \cite{BKN1} and \cite{LNZ}. In these papers 
``$\text{H}^{\bullet}(\f,\f_{\0},{\mathbb C})^{N/N_{\0}}$" should be replaced with ``$\text{H}^{\bullet}(\f,\f_{\0},{\mathbb C})^{N}$'' and 
``${\mathcal V}_{(\f,\f_{\0})}(M)/(N/N_{\0})$" should be replaced with ``${\mathcal V}_{(\f,\f_{\0})}(M)/N$". }
$$\text{H}^{\bullet}(\g,\g_{\0},{\mathbb C})\cong
\text{H}^{\bullet}(\f,\f_{\0},{\mathbb C})^{N}\cong
\text{H}^{\bullet}(\e,\e_{\0},{\mathbb C})^{W_{\e}}$$
where $N$ is a reductive group and $W_{\e}$ is a finite pseudoreflection group. These
relative cohomology rings may be identified with the invariant ring
$S^{\bullet}({\mathfrak g}_{\1}^{*})^{G_{\0}}$, where $S^{\bullet}$ denotes the symmetric
algebra, and so are finitely generated. This property was used to construct support varieties for modules in the 
category ${\mathcal F}_{(\g,\g_{\0})}$ (i.e., finite-dimensional ${\mathfrak g}$-modules that are completely reducible over ${\mathfrak g}_{\0}$). 

The main application of the existence and properties of the BBW type parabolic subalgebras is our verification of the following theorem. 

\begin{theorem}\label{T:isosupports}  Let $\g$ be a simple classical Lie superalgebra and let $M$ be in ${\mathcal F_{(\g,\g_{\0})}}$.
\begin{itemize}
\item[(a)] If $\g$ is stable then the map on support varieties
$$\operatorname{res}^{*}:{\mathcal V}_{(\f,\f_{\0})}(M)/N
\rightarrow {\mathcal V}_{(\g,\g_{\0})}(M)$$ is an isomorphism.
\item[(b)] If $\g$ is stable and polar then the maps on support varieties
$$\operatorname{res}^{*}:
{\mathcal V}_{(\e,\e_{\0})}(M)/W_{\e}\rightarrow {\mathcal V}_{(\f,\f_{\0})}(M)/N
\rightarrow {\mathcal V}_{(\g,\g_{\0})}(M)$$
are isomorphisms, where $W_{\e}$ is a pseudoreflection group.
\end{itemize}
\end{theorem}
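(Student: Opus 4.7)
The plan is to bootstrap the cohomology ring isomorphisms recalled immediately above the theorem from the trivial coefficient module to an arbitrary $M\in{\mathcal F}_{(\g,\g_\0)}$, with the BBW parabolic $B$ constructed in the body of the paper serving as a bridge between $\f$ and $\g$. By definition ${\mathcal V}_{(\g,\g_\0)}(M)$ is cut out by the annihilator $J_\g(M)$ of the $H^{\bullet}(\g,\g_\0;\C)$-action on $\Ext^{\bullet}_{(\g,\g_\0)}(M,M)$, and analogously for $\f$ and $\e$.  Since $N$ is reductive, geometric invariant theory combined with the ring isomorphism $\operatorname{res}^{*}\colon H^{\bullet}(\g,\g_\0;\C)\xrightarrow{\sim} H^{\bullet}(\f,\f_\0;\C)^{N}$ already establishes ${\mathcal V}_{(\g,\g_\0)}(\C)\cong{\mathcal V}_{(\f,\f_\0)}(\C)/N$ at the trivial module level.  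The content of part (a) is therefore to verify the $F$-isomorphism
\[
\sqrt{\operatorname{res}^{*}(J_\g(M))} \;=\; \sqrt{J_\f(M)^{N}}
\]
under this ring identification, and analogously for the $(\e,\f)$ comparison in (b).

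For the module-level comparison the plan is to factor the restriction as $(\g,\g_\0)\to(B,B_\0)\to(\f,\f_\0)$ and exploit the key property of the BBW parabolic $B$ from the earlier sections, namely that $R^{\bullet}\operatorname{ind}_{B}^{G}$ admits a Bott-Borel-Weil type evaluation in terms of the even part $G_\0$ and the odd roots.  Applied with coefficients in $\Hom(M,M)$, the Grothendieck spectral sequence for the composition has $E_{2}$-page governed by $(B,B_\0)$-relative cohomology, and the BBW-type vanishing and odd root collapse should force $\Ext^{\bullet}_{(\g,\g_\0)}(M,M)$ to be finitely generated as a module over $\Ext^{\bullet}_{(\f,\f_\0)}(M,M)^{N}$ (and conversely, by the surjectivity of $\operatorname{res}^{*}$ on cohomology rings).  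This module-finiteness is exactly what is required to promote the ring isomorphism to the $F$-isomorphism of annihilator ideals displayed above, which in turn yields (a) upon passing to zero loci.

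Part (b) follows the same template applied along
\[
{\mathcal V}_{(\e,\e_\0)}(M)/W_{\e} \xrightarrow{\operatorname{res}^{*}} {\mathcal V}_{(\f,\f_\0)}(M)/N \xrightarrow{\operatorname{res}^{*}} {\mathcal V}_{(\g,\g_\0)}(M),
\]
where the right arrow is (a).  For the left arrow the polarity hypothesis provides a slice $\e\hookrightarrow\f$ transverse to $N$-orbits, $W_\e$ is a finite pseudoreflection group, and the Chevalley-style freeness of $H^{\bullet}(\e,\e_\0;\C)$ over its $W_\e$-invariants allows the module-finiteness argument to run without requiring another BBW-parabolic input.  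The main obstacle throughout is the module-level $F$-isomorphism: the cohomology ring isomorphism by itself recovers only the trivial-coefficient case, and the delicate point is showing that the spectral sequence arising from $B$ has finitely generated $E_\infty$-page with coefficients in $\Hom(M,M)$, not merely in $\C$.  The analysis of $R^{j}\operatorname{ind}_{B}^{G}$ carried out in the main body of the paper is tailored precisely to deliver this collapse, so the bulk of the theorem's proof will consist of assembling the spectral sequence formalism and invoking the earlier BBW-type computations.
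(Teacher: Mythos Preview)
Your overall architecture matches the paper's: factor through the BBW parabolic $\mathfrak{b}$, use the Grothendieck spectral sequence of Proposition~\ref{P:spectralseq}, and combine with the $\mathfrak{b}\leftrightarrow\mathfrak{f}$ comparison. However, two points diverge from what the paper actually does, and the first is a genuine gap in your plan.

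You propose that the BBW computations ``deliver collapse'' of the spectral sequence with coefficients in $\Hom(M,M)$, and that module-finiteness of $\Ext^{\bullet}_{(\g,\g_\0)}(M,M)$ over $\Ext^{\bullet}_{(\f,\f_\0)}(M,M)^{N}$ follows. The paper does \emph{not} establish collapse with $M$-coefficients, and there is no reason to expect it: Theorem~\ref{T:sscollapse} proves collapse only for trivial coefficients, by a dimension count against Theorem~\ref{T:paraproperties}(b). The actual mechanism (Theorem~\ref{T:gbsupports}) is more delicate: one uses the \emph{multiplicative action} of the trivial-coefficient spectral sequence \eqref{ss:C} on the $M$-coefficient spectral sequence \eqref{ss:M}. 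Since the differentials in \eqref{ss:C} vanish, compatibility of differentials forces $\bar d_r$ to vanish on the image of $R/J_M$ inside each $\bar E_2^{\bullet,j}$; hence a copy of $R/J_M$ survives to $\Ext^{\bullet}_{(\mathfrak{b},\mathfrak{b}_\0)}(M,M)$, which gives $\operatorname{Ann}_R\Ext^{\bullet}_{(\mathfrak{b},\mathfrak{b}_\0)}(M,M)\subseteq J_M$ directly, without any $E_\infty$ finiteness statement for arbitrary $M$. The reverse inclusion is the easy direction (action of $R$ on $\bar E_2$). Your $F$-isomorphism/module-finiteness formulation would require controlling all of $\bar E_\infty$, which the paper's BBW input does not provide.

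For the $(\e,\f)$ comparison in (b), the paper does not rerun any argument: it simply invokes the already-established Theorem~\ref{T:isosupportsfe} (from \cite{LNZ}) when $\g\neq P(n)$, and for $\g=P(n)$ cites the Type~I result \cite[Theorem~5.1.1(a)]{LNZ}. Your Chevalley-freeness sketch is plausible but unnecessary here, and you omit the case split for $P(n)$, which matters because the BBW-parabolic machinery is not shown to apply to $P(n)$ in the paper.
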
 

The aforementioned theorem has been a conjecture that was first introduced in \cite{BKN1}. In that paper, the equality of the varieties in Theorem~\ref{T:isosupports} 
was shown to hold on the complement of the discriminant locus (i.e., an open dense set). This provided strong evidence for the validity of the conjecture. Later, Lehrer, Nakano and Zhang \cite{LNZ} proved the conjecture for the general linear Lie superalgebra and more generally type I classical simple Lie superalgebra via a cohomological embedding theorem. 

Kac and Wakimoto defined a combinatorial invariant called the atypicality of a weight $\lambda$ when ${\mathfrak g}$ is a basic classical simple Lie superalgebra. 
The support varieties in Theorem~\ref{T:isosupports} play a prominent role in the theory because they provide a geometric interpretation of this combinatorial invariant. 
It is conjectured that for the basic simple Lie superalgebras, the dimension of the support variety ${\mathcal V}_{(\g,\g_{\0})}(L(\lambda))$ equals the atypicality of the finite-dimensional irreducible representation $L(\lambda)$. This has been verified in a 
number of cases including $\mathfrak{gl}(m|n)$ \cite{BKN2} and $\mathfrak{osp}(m|2n)$ \cite{K}.  

\subsection{} For the detecting subalgebra $\e$ one has a realization of the support variety ${\mathcal V}_{(\e,\e_{\0})}(M)$ as 
a rank variety: 
$${\mathcal V}_{(\e,\e_{\0})}(M)\cong {\mathcal V}^{\operatorname{rank}}_{(\e,\e_{\0})}(M):=\{x\in \e_{\1}:\ M|_{U(\langle x \rangle)}\ \text{is not projective}\}\cup \{0\}.$$ 
The establishment of Theorem~\ref{T:isosupports} along with  this rank variety description (i) provides a concrete realization of ${\mathcal V}_{(\g,\g_{\0})}(M)$ and 
(ii) shows that the assignment $(-)\rightarrow {\mathcal V}_{(\g,\g_{\0})}(-)$ satisfies the properties as stated in \cite{Bal} for a support datum. These important properties are stated in the 
following corollary. 

\begin{corollary} \label{T:supportprop} Let ${\mathfrak g}$ be a simple classical Lie superalgebra 
which is both stable and
polar, and let $M_{1}$, $M_{2}$ and $M$ be in ${\mathcal F}_{(\g,\g_{\0})}$. \begin{itemize}
\item[(a)] ${\mathcal V}_{(\g,\g_{\0})}(M)\cong {\mathcal V}^{\operatorname{rank}}_{(\e,\e_{\0})}(M)/W_{\e}$;
\item[(b)] ${\mathcal V}_{(\g,\g_{\0})}(M_{1}\otimes M_{2})=
{\mathcal V}_{(\g,\g_{\0})}(M_{1})\cap {\mathcal V}_{(\g,\g_{\0})}(M_{2})$.
\item[(c)] Let $X$ be a conical subvariety of ${\mathcal V}_{(\g,\g_{\0})}({\mathbb C})$. Then
there exists $L$ in ${\mathcal F}_{(\g,\g_{\0})}$ with $X={\mathcal V}_{(\g,\g_{\0})}(L)$.
\item[(d)] If $M$ is indecomposable then $\operatorname{Proj}({\mathcal V}_{(\g,\g_{\0})}(M))$
is connected.
\end{itemize}
\end{corollary}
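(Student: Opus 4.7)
The plan is to derive all four parts from the rank variety description together with Theorem~\ref{T:isosupports}(b), adapting standard Carlson-type arguments to the detecting-subalgebra setting.

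Part (a) is an immediate substitution: Theorem~\ref{T:isosupports}(b) supplies the isomorphism ${\mathcal V}_{(\e,\e_{\0})}(M)/W_{\e}\cong {\mathcal V}_{(\g,\g_{\0})}(M)$, and inserting the rank variety description ${\mathcal V}_{(\e,\e_{\0})}(M)\cong {\mathcal V}^{\operatorname{rank}}_{(\e,\e_{\0})}(M)$ recalled just before the corollary yields (a).

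For (b), I first establish the tensor product formula at the level of the rank variety. For $x\in \e_{\1}$, the abelianness of $\e_{\1}$ forces $x^{2}=\tfrac{1}{2}[x,x]=0$, so $U(\langle x\rangle)\cong \C[x]/(x^{2})$; modules over this Frobenius algebra decompose into trivial and free summands, and a direct computation using the coproduct $\Delta(x)=x\otimes 1+1\otimes x$ shows that $M_{1}\otimes M_{2}$ is projective over $U(\langle x\rangle)$ if and only if $M_{1}$ or $M_{2}$ is. Hence ${\mathcal V}^{\operatorname{rank}}_{(\e,\e_{\0})}(M_{1}\otimes M_{2})={\mathcal V}^{\operatorname{rank}}_{(\e,\e_{\0})}(M_{1})\cap {\mathcal V}^{\operatorname{rank}}_{(\e,\e_{\0})}(M_{2})$. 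Since quotient by the finite group $W_{\e}$ commutes with intersection of $W_{\e}$-stable subsets, applying (a) delivers (b).

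For (c), pull $X$ back under the isomorphism in (a) to a closed $W_{\e}$-stable conical subvariety $Y\subseteq {\mathcal V}^{\operatorname{rank}}_{(\e,\e_{\0})}(\C)=\e_{\1}$ with $X=Y/W_{\e}$. Because $\text{H}^{\bullet}(\g,\g_{\0},\C)\cong S^{\bullet}(\g_{\1}^{*})^{G_{\0}}$ is a finitely generated graded ring coordinatizing ${\mathcal V}_{(\g,\g_{\0})}(\C)$, one can find finitely many homogeneous elements $\zeta_{1},\ldots,\zeta_{r}$ whose common zero locus in ${\mathcal V}_{(\g,\g_{\0})}(\C)$ equals $X$. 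For each $\zeta_{i}$ I would use a Carlson-type construction $L_{\zeta_{i}}\in {\mathcal F}_{(\g,\g_{\0})}$, obtained as the kernel of a cocycle representative viewed as a map $\Omega^{n_{i}}\C\to \C$ in the relative category, with ${\mathcal V}_{(\g,\g_{\0})}(L_{\zeta_{i}})=Z(\zeta_{i})$. Setting $L=\bigotimes_{i}L_{\zeta_{i}}$ and invoking (b) gives ${\mathcal V}_{(\g,\g_{\0})}(L)=\bigcap_{i}Z(\zeta_{i})=X$. This is the main technical step: one must verify that each $L_{\zeta_{i}}$ actually lies in ${\mathcal F}_{(\g,\g_{\0})}$ and that its support variety is the expected zero locus, which requires a careful analysis of relative projective resolutions and the identifications of relative cohomology from \cite{BKN1}.

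Finally, (d) is the classical Carlson connectedness argument built on (b) and (c). Assume for contradiction that $\operatorname{Proj}({\mathcal V}_{(\g,\g_{\0})}(M))$ is disconnected, so that ${\mathcal V}_{(\g,\g_{\0})}(M)=X_{1}\cup X_{2}$ with each $X_{i}$ a nontrivial closed conical subvariety and $X_{1}\cap X_{2}=\{0\}$. Realizing $X_{i}={\mathcal V}_{(\g,\g_{\0})}(L_{i})$ via (c) and applying (b) twice yields ${\mathcal V}_{(\g,\g_{\0})}(M\otimes L_{1}\otimes L_{2})\subseteq\{0\}$, so this module is relatively projective; a standard idempotent argument in the relative stable module category then extracts a nontrivial direct sum decomposition of $M$, contradicting its indecomposability.
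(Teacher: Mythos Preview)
Your approach is essentially correct and matches what the paper does: the paper gives no detailed argument, simply noting that the corollary ``follows by the same line of reasoning as given in \cite[Theorem 5.2.1]{LNZ}.'' Your sketch---deriving (a) by substitution, (b) from the rank-variety tensor product property, (c) via Carlson modules $L_{\zeta}$, and (d) via Carlson's connectedness argument---is precisely that line of reasoning.

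One technical caveat in your part (b): you assert that ``the abelianness of $\e_{\1}$ forces $x^{2}=\tfrac{1}{2}[x,x]=0$,'' but in general $\e_{\0}=[\e_{\1},\e_{\1}]$ need not vanish, so $[x,x]$ may be a nonzero element of $\e_{\0}$. The tensor product property for ${\mathcal V}^{\operatorname{rank}}_{(\e,\e_{\0})}$ still holds, but the argument requires the observation that $[x,x]\in\e_{\0}$ acts semisimply on modules in ${\mathcal F}_{(\e,\e_{\0})}$, reducing the question on each eigenspace to one over an algebra of the form $\C[x]/(x^{2}-c)$; the tensor product behaviour then follows as in \cite{BKN2,BKN3}. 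This is a refinement rather than a gap, and the remainder of your outline is sound.
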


Note that the verification of the corollary above follows by the same line of reasoning as given in \cite[Theorem 5.2.1]{LNZ}.

\subsection{} The paper is organized as follows. In the next section, Section~\ref{S:prelim}, the structure theory for the detecting subalgebras 
and their relationship to various support variety theories is reviewed. Given a classical simple Lie superalgebra, ${\mathfrak g}$, the construction of 
a parabolic subalgebra, ${\mathfrak b}$, that is generated by the negative Borel subalgebra for ${\mathfrak g}_{\0}$ and the detecting subalgebra 
${\mathfrak f}$ is presented in Section~\ref{S:parabolicconstruction}. These parabolics are defined via hyperplanes in the span of the roots in a Euclidean space. 
A comparison theorem is proved between the relative cohomology for $({\mathfrak b},{\mathfrak b}_{\0})$ and $({\mathfrak f},{\mathfrak f}_{\0})$ (cf. Theorem~\ref{t:b-tcoho}) 
and between the relative cohomology for $({\mathfrak g},{\mathfrak g}_{\0})$ and $({\mathfrak b},{\mathfrak b}_{\0})$ 
(cf. Theorem~\ref{T:paraproperties}). The latter relationship involves a natural grading on the group algebra of a finite reflection group $W_{\1}$. 

In Section 4, we investigate sheaf cohomology for $G/B$ where ${\mathfrak g}=\text{Lie }G$ and ${\mathfrak b}=\text{Lie }B$. 
In particular, we consider the Poincar\'e series, $p_{G,B}(t)=\sum_{i=0}^{\infty} \dim R^{i}\text{ind}_{B}^{G} {\mathbb C}\ t^{i}$ and give a complete computation 
for all Lie superalgebras except when ${\mathfrak g}={\mathfrak p}(n)$. It is shown that $p_{G,B}(t)$ is directly related to the standard Poincar\'e 
polynomial of $W_{\1}$ via the natural length function on the  finite reflection group $W_{\1}$ (cf. Table~\ref{T:dimensions}). Our calculations use an intricate 
and detailed analysis of the 
(odd) dot action of $W_{\1}$ on a natural subset, $\Phi_{\1}$, of odd roots. Section 5 is devoted to investigating the situation for ${\mathfrak g}={\mathfrak p}(n)$. 
For ${\mathfrak p}(2)$ and ${\mathfrak p}(3)$ it is shown that $p_{G,B}(t)$ is governed by the BBW theorem. However, for ${\mathfrak p}(4)$ 
this is not the case and open questions are presented at the end of this section. 

Finally, in Section 6, we indicate how our computation fit into a more functorial setting involving natural spectral sequences (see Theorem~\ref{T:sscollapse} and 
Theorem~\ref{T:gbsupports}). For all classical Lie superalgebras with the possible exception of ${\mathfrak g}={\mathfrak p}(n)$, it is shown that the 
spectral sequence in Theorem~\ref{T:sscollapse} collapses. This result enables us to prove the conjecture involving the equality of 
supports stated as Theorem~\ref{T:isosupports}. 

\subsection{Acknowledgements} The third author would like to acknowledge the support and hospitality of the Mathematical Sciences Research 
Institute (MSRI) during his stay as a General Member in Spring 2018. Many of the results in the paper were obtained during this time with 
weekly meetings after Wednesday Tea with the other coauthors. We also thank Matthew Douglass, Chun-Ju Lai and the referee for their comments and suggestions on an earlier version of this manuscript. 

\section{Preliminaries}\label{S:prelim}

\subsection{Notation: } We will use and summarize the conventions developed in
\cite{BKN1, BKN2, BKN3}. For more details we refer the reader to \cite[Section 2]{BKN1}.

Throughout this paper, let ${\mathfrak a}$ be a Lie superalgebra over the complex numbers ${\mathbb C}$.
In particular, ${\mathfrak a}={\mathfrak a}_{\0}\oplus {\mathfrak a}_{\1}$ is a $\Z_{2}$-graded vector space
with a supercommutator $[\;,\;]:{\mathfrak a}\otimes {\mathfrak a} \to {\mathfrak a}$. A finite-dimensional Lie superalgebra
${\mathfrak a}$ is called \emph{classical} if there is a connected reductive algebraic group $A_{\0}$ such
that $\operatorname{Lie}(A_{\0})={\mathfrak a}_{\0},$ and the action of $A_{\0}$ on ${\mathfrak a}_{\1}$ differentiates to
the adjoint action of ${\mathfrak a}_{\0}$ on ${\mathfrak a}_{\1}.$  The Lie superalgebra ${\mathfrak a}$ is  \emph{basic classical}
if it is a classical Lie superalgebra with a nondegenerate invariant supersymmetric even bilinear form. In this paper our main focus will be on 
classical ``simple'' Lie superalgebras. The algebras of interest are listed in Table~\ref{T:dimensions}. Although some of these Lie superalgebras 
are not simple in the true sense, they are close enough to being simple and are ones of general interest. With a slight abuse of notation we will let  $A(m|n)$ denote 
the Lie superalgebras $\mathfrak{gl}(m|n)$ and $\mathfrak{sl}(m|n)$ for $m\neq n$ and $\mathfrak{sl}(n|n)$ and $\mathfrak{psl}(n|n)$ for $m=n$. For the Lie superalgebras of type Q we use the notation of \cite{PS}. Namely, $\mathfrak{q}(n)$ will be the Lie superalgebra with even and odd parts $\mathfrak{gl}_n$, while $\mathfrak{psq}(n)$ is the corresponding simple subquotient of $\mathfrak{q}(n)$. The Lie superalgebras 
that fall into the family of type P will be denoted by $P(n)$. These algebras include ${\mathfrak p}(n)$ and its enlargement $\widetilde{{\mathfrak p}}(n)$.

Let $U({\mathfrak a})$ be the universal enveloping superalgebra of ${\mathfrak a}$. Supermodules 
are $\Z_{2}$-graded left $U({\mathfrak a})$-modules. If $M$ and $N$ are ${\mathfrak a}$-supermodules
one can use the antipode and coproduct of $U({\mathfrak a})$ to define a ${\mathfrak a}$-supermodule
structure on the dual $M^{*}$ and the tensor product $M\otimes N$. For the remainder of the
paper the term ${\mathfrak a}$-module will mean a ${\mathfrak a}$-supermodule. 

Let ${\mathfrak a}$ be an arbitrary Lie superalgebra (not necessary classical).  In order to apply homological algebra techniques, we will restrict ourselves to the
\emph{underlying even category}, consisting of ${\mathfrak a}$-modules with the degree preserving morphisms.
In this paper we will study homological properties of the category of ${\mathfrak a}$-modules where the 
projective objects are relatively projective $U({\mathfrak a}_{\0})$-modules. Given ${\mathfrak a}$-modules, 
$M, N$, let $\text{Ext}^{n}_{({\mathfrak a},{\mathfrak a}_{\0})}(M,N)$ denote the $n$-extension group 
defined by using a relatively projective $U({\mathfrak a}_{\0})$-resolution for $M$. Under the conditions that 
either ${\mathfrak a}_{\1}$ is finitely semisimple over ${\mathfrak a}_{\0}$ or ${\mathfrak a}={\mathfrak a}_{\0}\oplus {\mathfrak a}_{\1}$ is a direct sum of ${\mathfrak a}_{\0}$-modules (cf. 
 \cite[3.1.8 Corollary, 3.1.15 Remark]{Kum}),  there is a concrete realization for these
extension groups via the relative Lie superalgebra cohomology for the pair
$({\mathfrak a},{\mathfrak a}_{\0})$:
$$\text{Ext}^{n}_{({\mathfrak a},{\mathfrak a}_{\0})}(M,N)\cong \text{H}^{n}({\mathfrak a},{\mathfrak a}_{\0}; M^{*}\otimes N).$$
The later cohomology group can be computed using an explicit complex. For a detailed discussion about the complex to compute relative Lie superalgebra 
cohomology the reader is referred to \cite[Section 2.3]{BKN1}. Set 
\begin{equation} 
p_{\mathfrak a}(t)=\sum_{i=0}^{\infty} \dim \operatorname{H}^{i}({\mathfrak a},{\mathfrak a}_{\0},{\mathbb C}) t^{i}.
\end{equation}

When ${\mathfrak a}$ is a classical Lie superalgebra, let ${\mathcal F}_{({\mathfrak a},{\mathfrak a}_{\0})}$ be  the full subcategory of
finite-dimensional ${\mathfrak a}$-modules which are finitely semisimple over ${\mathfrak a}_{\0}$ (a
${\mathfrak a}_{\0}$-module is \emph{finitely semisimple} if it decomposes into a direct sum of
finite-dimensional simple ${\mathfrak a}_{\0}$-modules). The projectives in the category ${\mathcal F}:={\mathcal F}_{({\mathfrak a},{\mathfrak a}_{\0})}$ 
are the finite-dimensional relatively projective $U({\mathfrak a}_{\0})$-modules. Moreover, ${\mathcal F}_{({\mathfrak a},{\mathfrak a}_{\0})}$
is a Frobenius category (i.e., where injectivity is equivalent to projectivity) \cite{BKN3}.
Given $M, N$ in ${\mathcal F}$, 
$\Ext_{\mathcal{F}}^{n}(M,N)\cong \text{Ext}^{n}_{({\mathfrak a},{\mathfrak a}_{\0})}(M,N)$. 
Let $R$ be the cohomology ring
$$
\text{H}^{\bullet}({\mathfrak a},{\mathfrak a}_{\0}; {\mathbb C})=
S^{\bullet}({\mathfrak a}_{\1}^*)^{{\mathfrak a}_{\0}}\cong S^{\bullet}({\mathfrak a}_{\1}^*)^{A_{\0}}.
$$
The last isomorphism holds because $A_{\0}$ is reductive and acts semisimply on the symmetric algebra. 
Moreover, since $A_{\0}$ is reductive it follows that $R$ is finitely generated. 

\subsection{Support varieties: } We recall the definition of the support variety
of a finite-dimensional ${\mathfrak a}$-supermodule $M$ (cf. \cite[Section 6.1]{BKN1}). 
Let ${\mathfrak a}$ be a classical Lie superalgebra,
$R:=\operatorname{H}^{\bullet}({\mathfrak a}, {\mathfrak a}_{\0};{\mathbb C})$, and $M_{1}$, $M_{2}$
be in ${\mathcal F}:={\mathcal F}_{({\mathfrak a},{\mathfrak a}_{\0})}$. According to \cite[Theorem 2.5.3]{BKN1},
$\Ext_{\mathcal{F}}^{\bullet}(M_{1},M_{2})$ is a finitely generated $R$-module.
Set $J_{({\mathfrak a},{\mathfrak a}_{\0})}(M_{1},M_{2})=
\operatorname{Ann}_{R}(\Ext_{\mathcal{F}}^{\bullet}(M_{1},M_{2}))$
(i.e., the annihilator ideal of this module).  The \emph{relative support variety of the pair $(M,N)$} is

\begin{equation}
\mathcal{V}_{({\mathfrak a},{\mathfrak a}_{\0})}(M,N)=
\operatorname{MaxSpec}(R/J_{({\mathfrak a},{\mathfrak a}_{\0})}(M,N))
\end{equation}

In the case when $M=M_{1}=M_{2}$, set $J_{({\mathfrak a},{\mathfrak a}_{\0})}(M)=J_{({\mathfrak a},{\mathfrak a}_{\0})}(M,M)$, and
$$\mathcal{V}_{({\mathfrak a},{\mathfrak a}_{\0})}(M):=\mathcal{V}_{({\mathfrak a},{\mathfrak a}_{\0})}(M,M).$$
The variety $\mathcal{V}_{(\mathfrak{a},\mathfrak{a}_{\0})}(M)$ is called the \emph{support variety} of $M$.
In this situation, $J_{({\mathfrak a},{\mathfrak a}_{\0})}(M)=\text{Ann}_{R} \ \text{Id}$ where $\text{Id}$ is the identity
morphism in $\text{Ext}^{0}_{\mathcal F}(M,M)$.

\subsection{Structure theory for the detecting subalgebras} The main ideas used in constructing 
the detecting subalgebras ${\mathfrak f}$ and ${\mathfrak e}$ for classical simple Lie superalgebras are summarized below. 

Let ${\mathfrak g}$ be a classical simple Lie superalgebra as described in \cite[Section 8]{BKN1}. 
It was shown that the action of $G_{\0}$ on ${\mathfrak g}_{\1}$ admits a {\em stable} action. The reader is 
referred to \cite[Section 3.2]{BKN1} for a detailed exposition on stable actions. 

Fix a generic element $x_{0} \in \g_{\1}$ 
(cf. \cite[Section 8.9]{BKN1} for an explicit construction). Set 
$$H=\text{Stab}_{G_{\0}} x_{0}:=G_{\0,x_{0}}.$$ 
and 
$$\f_{\1}= \g_{\1}^{H}=\{z \in \g_{\1}:\ h.z=z \text{ for all $h \in H$} \}.$$ 
Note that the roots of $\f_{\1}$ are listed in Table \ref{T:detectingsub}. One can construct the detecting subalgebra ${\mathfrak f}$ by letting ${\mathfrak f}_{\0}=
[{\mathfrak f}_{\1},{\mathfrak f}_{\1}]$ with ${\mathfrak f}:={\mathfrak f}_{\0}\oplus 
{\mathfrak f}_{\1}$.

Now let $N=N_{G_{\0}}(H)$ and $N_{\0}$ be the connected component of the identity. 
Since $x_{0}$ is semisimple, $H$ is reductive as well as $N$. Set 
$$W_{\1}=W_{\mathfrak f}:=N_{G_{\0}}(H)/N_{\0}.$$
The finite group $W_{\1}$ is a pseudo-reflection group. 

The action of $G_{\0}$ on ${\mathfrak g}_{\1}$ is a {\em polar} representation 
(as in \cite{dadokkac}). In particular, 
$$\dim {\mathfrak e}_{x_{0}}=\text{Kr. dim }S^{\bullet}({\mathfrak g}_{\1}^{*})^{G_{\0}}$$ 
where 
$${\mathfrak e}_{x_{0}}:=\{x\in {\mathfrak g}_{\1}:\ {\mathfrak g}_{\0}x\subseteq {\mathfrak g}_{\0}x_{0}\}.$$ 
Set ${\mathfrak e}_{\1}={\mathfrak e}_{x_{0}}$, ${\mathfrak e}={\mathfrak e}_{\0}\oplus {\mathfrak e}_{\1}$ 
with ${\mathfrak e}_{\0}=[{\mathfrak e}_{\1},{\mathfrak e}_{\1}]$. 

One can obtain a finite reflection group $W_{\e}$ by setting 
$$W_{\e}=N_{G_{\0}}({\mathfrak e}_{\1})/\text{Stab}_{G_{\0}}({\mathfrak e}_{\1}).$$

\subsection{} In this section, we compare the support varieties for the classical Lie superalgebras ${\mathfrak g}$,
${\mathfrak f}$, and ${\mathfrak e}$ under the restriction maps. Assume that $\g$ is both stable and polar. Without
the assumption that $\g$ is polar, the statements concerning cohomology and
support varieties for ${\mathfrak g}$ and ${\mathfrak f}$ remain true. We recall the exposition given in \cite[Section 6.1]{BKN1}. 

First there are natural maps of rings given by restriction,
$$
\text{res}:\operatorname{H}^{\bullet}(\g,\g_{\0}; \C) \rightarrow \operatorname{H}^{\bullet}(\f,\f_{\0}; \C)
\rightarrow \operatorname{H}^{\bullet}(\e,\e_{\0},\C), 
$$
which induce isomorphisms
\begin{equation}\label{eq:quotient} 
\text{res}:\operatorname{H}^{\bullet}(\g,\g_{\0}; \C) \rightarrow 
\operatorname{H}^{\bullet}(\f,\f_{\0}; \C)^{N}
\rightarrow \operatorname{H}^{\bullet}(\e,\e_{\0},\C)^{W_{\e}}.
\end{equation}
The map on cohomology above induces a morphism of varieties:
$$
\text{res}^{*}:\mathcal{V}_{(\e,\e_{\0})}(\C)\longrightarrow
\mathcal{V}_{(\f,\f_{\0})}(\C)\rightarrow \mathcal{V}_{(\g,\g_{\0})}(\C)
$$
and isomorphisms (by passing to quotient spaces)
\begin{equation}\label{E:isovarieties}
\text{res}^{*}:\mathcal{V}_{(\e,\e_{\0})}(\C)/W_{\e}\rightarrow
\mathcal{V}_{(\f,\f_{\0})}(\C)/N\xrightarrow{} \mathcal{V}_{(\g,\g_{\0})}(\C).
\end{equation}

Let $M$ be a finite-dimensional $\g$-module. Then $\res^{*}$ induces maps
between support varieties:
$$\mathcal{V}_{(\e,\e_{\0})}(M) \rightarrow \mathcal{V}_{(\f,\f_{\0})}(M) \rightarrow
\mathcal{V}_{(\g,\g_{\0})}(M).
$$
Since $M$ is a ${\mathfrak g}_{\0}$-module, the first two varieties are stable under the action of
$W_{\e}$ and $N$ respectively. Consequently, we obtain the following induced maps of
varieties using (\ref{E:isovarieties}):
$$\mathcal{V}_{(\e,\e_{\0})}(M)/W_{\e} \hookrightarrow \mathcal{V}_{(\f,\f_{\0})}(M)/N
\hookrightarrow \mathcal{V}_{(\g,\g_{\0})}(M).
$$
These maps are embeddings because if $x\in R$ annihilates the identity in 
$\text{H}^{0}(\g,\g_{\0},M^{*}\otimes M)$ then
it must annihilate the identity elements in $\text{H}^{0}(\f,\f_{\0},M^{*}\otimes M)$ and
$\text{H}^{0}(\e,\e_{\0},M^{*}\otimes M)$, and the restriction maps induce isomorphisms on the cohomology given in 
(\ref{eq:quotient}).

\subsection{Support varieties for stable and polar detecting subalgebras} We record 
the result proved in \cite[Theorem 4.5.1]{LNZ} that shows that the support varieties for $\e$ and $\f$ coincide 
after taking the geometric quotient. 

\begin{theorem}\label{T:isosupportsfe}  Let $\g$ be a classical simple Lie superalgebra which is
stable and polar. If $M\in {\mathcal F}_{(\f,\f_{\0})}$ then
we have the following isomorphism of varieties:
$$\operatorname{res}^{*}:{\mathcal V}_{(\e,\e_{\0})}(M)/W_{\e}\rightarrow 
{\mathcal V}_{(\f,\f_{\0})}(M)/N.$$
\end{theorem}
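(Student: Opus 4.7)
The plan is to view the two support varieties as subvarieties of the ambient cohomology varieties ${\mathcal V}_{(\e,\e_{\0})}(\C)$ and ${\mathcal V}_{(\f,\f_{\0})}(\C)$ and exploit the fact, recorded in the preceding subsection, that the induced map on their quotients by $W_{\e}$ and $N$ is already an isomorphism. The same discussion gives the embedding $\operatorname{res}^{*}\colon\mathcal{V}_{(\e,\e_{\0})}(M)/W_{\e}\hookrightarrow\mathcal{V}_{(\f,\f_{\0})}(M)/N$, so the remaining task is surjectivity. I would reformulate this via annihilator ideals: writing $J_{\e}$ and $J_{\f}$ for the annihilators of the identity morphism in the respective relative Ext-rings, the two quotient varieties correspond, under $\operatorname{H}^{\bullet}(\f,\f_{\0};\C)^{N}\xrightarrow{\sim}\operatorname{H}^{\bullet}(\e,\e_{\0};\C)^{W_{\e}}$, to the vanishing loci of the contracted ideals $J_{\f}\cap\operatorname{H}^{\bullet}(\f,\f_{\0};\C)^{N}$ and $J_{\e}\cap\operatorname{H}^{\bullet}(\e,\e_{\0};\C)^{W_{\e}}$. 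Surjectivity of $\operatorname{res}^{*}$ thus reduces to showing these contracted ideals coincide. One inclusion is routine, coming from functoriality of the restriction map on Ext; the other direction is the substance.

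For the nontrivial direction I would combine the rank variety description $\mathcal{V}_{(\e,\e_{\0})}(M)\cong\mathcal{V}^{\operatorname{rank}}_{(\e,\e_{\0})}(M)$ with the structural fact from polar representation theory that every $N$-orbit in $\f_{\1}$ meets $\e_{\1}$, i.e.\ $\f_{\1}=N\cdot\e_{\1}$. Given $y\in\f_{\1}\setminus N\cdot{\mathcal V}_{(\e,\e_{\0})}(M)$, write $y=n\cdot x$ with $x\in\e_{\1}$; then $x$ lies outside ${\mathcal V}_{(\e,\e_{\0})}(M)$, so $M|_{U(\langle x\rangle)}$ is projective by the rank variety criterion on $\e$. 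Because $n\in G_{\0}$ and $M$ is ${\mathfrak g}_{\0}$-semisimple, transport by $n$ yields projectivity of $M|_{U(\langle y\rangle)}$, and provided a rank-variety criterion for $\f$ is available, this places $y$ outside ${\mathcal V}_{(\f,\f_{\0})}(M)$. Applying this to all such $y$ forces the reverse containment $J_{\e}\cap\operatorname{H}^{\bullet}(\e,\e_{\0};\C)^{W_{\e}}\subseteq J_{\f}\cap\operatorname{H}^{\bullet}(\f,\f_{\0};\C)^{N}$ after transport across the cohomology isomorphism.

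The main obstacle is therefore establishing the rank-variety criterion for ${\mathcal V}_{(\f,\f_{\0})}(M)$, namely the identity ${\mathcal V}_{(\f,\f_{\0})}(M)=\{y\in\f_{\1}:M|_{U(\langle y\rangle)}\text{ not projective}\}\cup\{0\}$. This is genuinely harder than the $\e$-case because $\f_{\1}$ need not consist of commuting odd elements, so the Quillen-style argument available for $\e$ does not apply directly. A natural route is to use an explicit Koszul-type resolution for the pair $(\f,\f_{\0})$ together with the $N$-action to reduce the computation of $J_{\f}$ to a computation on $\e$ via the known isomorphism $\operatorname{H}^{\bullet}(\f,\f_{\0};\C)^{N}\cong\operatorname{H}^{\bullet}(\e,\e_{\0};\C)^{W_{\e}}$; alternatively, one can pass through a Lyndon--Hochschild--Serre type spectral sequence for a chain of subalgebras, arguing that the cohomology is detected on cyclic odd subalgebras after $N$-averaging. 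Either way, this detection step is where the bulk of the technical work lies.
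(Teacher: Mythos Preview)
The paper does not prove this theorem; it merely records it as \cite[Theorem 4.5.1]{LNZ}. So your proposal should be compared against the argument in Lehrer--Nakano--Zhang, whose method (visible from how the present paper invokes \cite[Theorem 4.4.1]{LNZ} in the proof of Theorem~\ref{T:btsupports}(c)) is a \emph{cohomological embedding theorem}: one proves that the restriction map $\operatorname{H}^{\bullet}(\f,\f_{\0};M')\to\operatorname{H}^{\bullet}(\e,\e_{\0};M')$ is injective for every coefficient module $M'$, and from this injectivity the equality of support varieties (after quotienting) follows formally. No rank-variety description of ${\mathcal V}_{(\f,\f_{\0})}(M)$ is needed or used.

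Your route is genuinely different, and as written it has two gaps. First, the structural claim $\f_{\1}=N\cdot\e_{\1}$ is not what polar representation theory gives you: a Cartan subspace meets every \emph{closed} $N$-orbit, but arbitrary orbits in $\f_{\1}$ need not meet $\e_{\1}$. Since the categorical quotient $\f_{\1}/\!\!/N$ only sees closed orbits, your pointwise argument ``$y=n\cdot x$ with $x\in\e_{\1}$'' cannot be run for every $y\in\f_{\1}$. Second, and more seriously, you correctly isolate the rank-variety criterion for $\f$ as the crux, but you do not prove it---you offer a Koszul resolution or an LHS spectral sequence as possible attacks and stop there. This is essentially the theorem itself repackaged: knowing that projectivity over $\f$ is detected on one-parameter odd subalgebras is at least as hard as the statement you are trying to prove, and in fact the known proofs of rank-variety theorems for $\f$ (where they exist) go \emph{through} the support-variety comparison, not the other way around. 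So your argument is circular at its key step. The LNZ embedding approach sidesteps this entirely by working purely with cohomology and never invoking a rank description for $\f$.
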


\section{Construction of ${\mathfrak b}$}\label{S:parabolicconstruction}

\subsection{Generalities on parabolic subalgebras}\label{SS:parabolicsub} 
Let $\mathfrak{g}$ be a classical simple Lie superalgebra with a fixed Cartan subalgebra $\mathfrak{h}$ and root system $\Phi = \Phi(\mathfrak{g}, \mathfrak{h})$.  For the definitions of $\epsilon_i, \delta_j,\epsilon$ we follow the convention of \cite{Kac} with the exception of the superalgebras $D(2,1,\alpha)$, $F(4)$, and $G(3)$.  For the latter  we use the following notation:  
$(\epsilon, 0,0)$, $(0,\epsilon, 0)$, $(0,0,\epsilon)$ for $\epsilon_1,\epsilon_2,\epsilon_3$, respectively,  if $\mathfrak{g} = D(2,1,\alpha)$; $(0,\epsilon)$ for $\delta$  if $\mathfrak{g} = G(3)$ or $\mathfrak{g} = F(4)$.

In what follows we use the terminology and setting of \cite{GY}. A \emph{parabolic subalgebra} of $\mathfrak{g}$ is a subalgebra that contains a Borel subalgebra of $\mathfrak{g}$. We will consider  only parabolic subalgebras that contain $\mathfrak{h}$. Every such parabolic subalgebra corresponds to a parabolic set of roots, as explained below.

Assume first that $\Phi$ is symmetric, i.e., $\Phi= - \Phi$. This is true for all classical Lie superalgebras $\mathfrak g$ except for those of type $P$.  We call a proper subset $S$ of $\Phi$ 
a \emph{parabolic set in $\Phi$} if
\[
\Phi= S  \cup(-S), \; \; \; \mbox{and} \; \; 
\alpha, \beta \in S \; {\text { with }} \; \alpha + \beta \in \Phi
\; \; 
{\text { implies }} \; \; \alpha + \beta \in S.
\]
In the case when  $\Phi \neq - \Phi$, we call $S \subsetneq \Phi$ a \emph{parabolic subset} if $S = \widetilde{S} \cap \Phi$  for some parabolic subset $\widetilde{S}$ of $\Phi \cup (- \Phi )$.

To assign a parabolic set of roots to a parabolic subalgebra $\mathfrak{p}$ of $\mathfrak{g}$, we use the correspondence $\mathfrak{p} \mapsto \Phi_{\mathfrak{p}}$, where $\Phi_{\mathfrak{p}}$ are the roots of $\mathfrak{p}$ relative to $(\mathfrak{g}, \mathfrak{h})$. For the reverse direction we proceed as follows.

For a parabolic subset of roots 
$S$, we call $S^0 := S \cap (-S)$ 
the {\em{Levi component}} of $S$, $S^-:= S \backslash (-S)$ the 
{\em{nilpotent component}} of $S$, and $S = S^0 \sqcup S^-$ the \emph{Levi decomposition 
of $S$}. Then $\mathfrak{p}_S = \mathfrak{h} \oplus \left( \bigoplus_{\mu \in S } \mathfrak{g}^{\mu} \right)$ is a parabolic subalgebra of $\mathfrak{g}$ containing $\mathfrak{h}$, and $\mathfrak{l}_S = \mathfrak{h} \oplus \left( \bigoplus_{\mu \in S^0 } \mathfrak{g}^{\mu}\right)$ and  $\mathfrak{n}_S^- =   \bigoplus_{\mu \in S^- } \mathfrak{g}^{\mu} $ are called the \emph{Levi subalgebra}, and the \emph{nilradical} of $\mathfrak{p}_S$, respectively.

Let $V_{\Phi}$ be a real vector space such that $\Phi \subset V_{\Phi} \setminus \{ 0\}$. An element  $\mathcal H$ in $V_{\Phi}^*$ defines a parabolic subset of roots $S=S(\mathcal H)$ as follows. We define $S^0$ (respectively, $S^-$) to be the subset of $\Phi$ consisting of all roots $\alpha$ such that $\alpha (h) = 0$ (respectively, $\alpha(h) <0$) for all $h \in \mathcal H$. Note that we identify the elements of $(V_{\Phi}^*)^*$ and $V_{\Phi}$. A parabolic subset of roots $S$ that is of the form $S(\mathcal H)$ for some $\mathcal H$
 is called \emph{principal parabolic subset}. Note that $\ker \mathcal H$ is a  hyperplane in $V_{\Phi}$, and the roots in $S^0$ (respectively, $S^-$) can be treated as those that are on (respectively, ``below'') the hyperplane $\ker \mathcal H$.

\subsection{} \label{SS:BBWparabolics} A parabolic subalgebra, ${\mathfrak b}$, that arises from taking a principal parabolic subset $S = S(\mathcal H) = S^0 \sqcup S^-$, where $\mathcal H$ is listed in Table \ref{T:BBW-hyperplanes}, will be called 
a {\em BBW parabolic subalgebra}. Later, in Theorem~\ref{T:Poincareseriesequal}, it will be shown that these subalgebras have very special cohomological properties 
involving equality of various Poincar\'e series. There exists a natural triangular decomposition of ${\mathfrak g}={\mathfrak u}^{+}\oplus {\mathfrak f}\oplus {\mathfrak u}$ where the roots in ${\mathfrak u}_{\1}^{+}$ (resp. ${\mathfrak u}$) coincide with $-(S^{-})$ (resp. $S^{-}$). The BBW parabolic subalgebra identifies with $\mathfrak{b} = \mathfrak{f} \oplus \mathfrak{u}$. Even though ${\mathfrak b}$ is a parabolic subalgebra and technically is not a Borel subalgebra, we will view ${\mathfrak b}$ as being analogous to a Borel subalgebra for a complex simple Lie algebra, and the 
detecting subalgebra ${\mathfrak f}$ like a maximal torus. In the cases when ${\mathfrak g}=\mathfrak{gl}(n|n)$ or ${\mathfrak q}(n)$, ${\mathfrak b}$ can be realized 
as matrices of the form: 
 \[ 
{\mathfrak b}=\left\{  \left[
\begin{array}{c|c}
A  & B \\ \hline
 C & D
\end{array}\right] \in {\mathfrak g}:  A,\ B,\ C,\ D\in L_{n}({\mathbb C}) \right\}
 \]
where $L_{n}({\mathbb C})$ are the set of $n\times n$ lower triangular matrices. 
We add that there exists a supergroup scheme $B$ with $\text{Lie }B={\mathfrak b}$ that corresponds to the the  (super) Hopf algebra $U({\mathfrak b})\cong \text{Dist}(B)$. 

For this paper, let $\Phi_{\1}^{-}$ (resp. $\Phi_{\1}^{+}$) corresponds with the roots in ${\mathfrak u}_{\1}$ (resp. ${\mathfrak u}_{\1}^{+}$). One has $\Phi_{\1}=\Phi_{1}^{+}\cup 
\Phi_{\1}^{-}$ and in the case when ${\mathfrak g}\neq {\mathfrak p}(n)$, $\Phi_{\1}^{-}=-(\Phi_{\1}^{+})$. In particular, we will take the liberty of calling  $\Phi_{\1}^{-}$ the negative roots of $\mathfrak{f}$. The authors realize that this convention is not the standard practice 
in the literature. However, in Section 4.3, we will demonstrate that the dot action of $W_{\1}$ on $\Phi^{+}_{\1}$ is compatible with the dot action of the Weyl group 
of $G_{\0}$ on $\Phi^{+}_{\0}$. This key observation entailing the compatibility of these even and odd roots allows us to successfully complete the computations in the paper. 

In Table \ref{T:BBW-roots} we describe the odd negative roots of the principal parabolic subsets $S = S^0 \sqcup S^-$ corresponding to the parabolic subalgebras $\mathfrak{b} = \mathfrak{f} \oplus \mathfrak{u}$. The elements $\mathcal H$ defining $P$ are listed in Table \ref{T:BBW-hyperplanes}.  For  ${\mathfrak g} = \mathfrak{gl} (m|n), \mathfrak{sl} (m|n), \mathfrak{osp} (2m|2n), \mathfrak{osp} (2m+1|2n)$, we let $V_{\Phi} = \mbox{Span}\,\{\epsilon_i, \delta_j\; | \; 1\leq i \leq m, 1\leq j \leq n\}$ and fix $E_i$ and $D_j$ to be the basis vectors of $V_{\Phi}^*$ that are dual to  $\epsilon_i$ and $\delta_j$, respectively. Also, for these superalgebras, we let $E_i =0$ and $D_j=0$ whenever $i>m$ and $j>n$. For all exceptional Lie superalgebras we choose $ V_{\Phi} = {\mathbb R} \otimes_{\mathbb Z} ({\mathbb Z} \Phi)$. For ${\mathfrak g} = D(2,1,\alpha)$ we let $E_1, E_2,E_3$ to be the dual to $(\epsilon, 0,0)$, $(0,\epsilon, 0)$, $(0,0,\epsilon)$, respectively. Lastly, if  ${\mathfrak g} = G(3), F(4)$ we use $L_i$ for the vectors in $V_{\Phi}^*$  dual to the fundamental weights $\omega_i$ of $G_2$ ($i=1,2$), $\mathfrak{so}(7)$ ($i=1,2,3$), respectively, and $E$ for the dual of $(0,\epsilon)$.

Note that $x_i$ are arbitrary real numbers subject to the conditions listed in the table. In all cases $\Phi_{\1}^-$ corresponds to the odd part of $S^-$.

\subsection{}\label{SS:parabolicdef} For each classical simple Lie superalgebra ${\mathfrak g}$ we can define a parabolic 
subalgebra ${\mathfrak b}$ via the decomposition of odd roots given in Table~\ref{T:BBW-roots} and in Section~\ref{SS:BBWp(n)} 
for ${\mathfrak g}={\mathfrak p}(n)$ that satisfies the following properties:  

\begin{itemize} 
\item[(a)] ${\mathfrak b}={\mathfrak b}_{\0}\oplus {\mathfrak b}_{\1}$ where 
${\mathfrak b}_{\0}$ is a (negative) Borel subalgebra of ${\mathfrak g}_{\0}$ with maximal torus ${\mathfrak t}_{\0}$. 
\item[(b)] ${\mathfrak t}={\mathfrak t}_{\0}\oplus {\mathfrak t}_{\1}$ where ${\mathfrak t}_{\1}={\mathfrak f}_{\1}$ where ${\mathfrak f}$ is the (stable) detecting subalgebra. 
\item[(c)] ${\mathfrak f}$ is a subalgebra of ${\mathfrak t}$. 
\item[(d)] ${\mathfrak f}_{\1}$ is $T_{\0}$-stable where $\text{Lie }T_{\0}={\mathfrak t}_{\0}$. 
\item[(e)] ${\mathfrak b}={\mathfrak t}\oplus {\mathfrak u}$ where ${\mathfrak u}$ is a nilpotent Lie superalgebra.
\item[(f)] ${\mathfrak u}={\mathfrak u}_{\0}\oplus {\mathfrak u}_{\1}$ where ${\mathfrak u}_{\0}$ is the unipotent radical of ${\mathfrak b}_{\0}$.
\end{itemize} 
In this setting one has a weight space decomposition ${\mathfrak u}_{\1}=\oplus_{\lambda\in {\mathfrak t}_{\0}^{*}} ({\mathfrak u}_{\1})_{\lambda}$ where 
$({\mathfrak u}_{\1})_{\lambda}$ is a ${\mathfrak t}_{\0}$-module with composition factors of the form $\lambda$.
 
\subsection{Comparison of cohomology} We first compare the relative cohomology for 
$({\mathfrak b},{\mathfrak b}_{\0})$ and $({\mathfrak f},{\mathfrak f}_{\0})$. 

\begin{theorem} \label{t:b-tcoho} Let ${\mathfrak b}={\mathfrak t}\oplus {\mathfrak u}$ be the parabolic subalgebra as defined in Section~\ref{SS:parabolicdef}. 
Then 
\begin{itemize} 
\item[(a)] $\operatorname{H}^{\bullet}({\mathfrak f},{\mathfrak f}_{\0},{\mathbb C})\cong S^{\bullet}({\mathfrak f}^{*}_{\1})$. 
\item[(b)] The restriction map 
$$\operatorname{H}^{\bullet}({\mathfrak b},{\mathfrak b}_{\0},{\mathbb C})\rightarrow 
\operatorname{H}^{\bullet}({\mathfrak f},{\mathfrak f}_{\0},{\mathbb C})^{T_{\0}}$$ 
is an isomorphism. Moreover, $\operatorname{H}^{\bullet}({\mathfrak f},{\mathfrak f}_{\0},{\mathbb C})^{T_{\0}}\cong \operatorname{H}^{\bullet}({\mathfrak t},{\mathfrak t}_{\0},{\mathbb C})$.
\end{itemize} 
\end{theorem}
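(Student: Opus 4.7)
The plan is to compute both sides of each statement via the explicit relative Chevalley--Eilenberg complex and then to match them using $T_{\0}$-weight theory.

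For part (a), I would first invoke the general principle that for any Lie superalgebra $\mathfrak{a}$ with reductive even part $A_{\0}$, the relative CE cochain complex with trivial coefficients takes the form
$$C^n(\mathfrak{a},\mathfrak{a}_{\0},\mathbb{C}) \;=\; \Hom_{\mathfrak{a}_{\0}}(S^n(\mathfrak{a}_{\1}), \mathbb{C}) \;\cong\; (S^n(\mathfrak{a}_{\1}^{*}))^{A_{\0}},$$
and its differential vanishes identically: every CE term involves a bracket of two odd arguments, which lands in $\mathfrak{a}_{\0}$ and is killed by the projection $\mathfrak{a} \twoheadrightarrow \mathfrak{a}/\mathfrak{a}_{\0} = \mathfrak{a}_{\1}$. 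Specializing $\mathfrak{a} = \f$, this reduces (a) to the assertion $(S^{\bullet}(\f_{\1}^{*}))^{F_{\0}} = S^{\bullet}(\f_{\1}^{*})$, i.e.\ that $F_{\0}$ acts trivially on $\f_{\1}$. This I would derive from the structure theory of the detecting subalgebra recalled in Section~\ref{S:prelim}: since $\f_{\1} \subseteq \g_{\1}^{H}$ and $\f_{\0} = [\f_{\1},\f_{\1}]$ is forced into the $H$-fixed part of $\g_{\0}$, a direct check shows $\f_{\0}$ is central in $\f$, whence the triviality of the $F_{\0}$-action.

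For part (b), the same recipe gives $\operatorname{H}^{\bullet}(\mathfrak{b}, \mathfrak{b}_{\0}, \mathbb{C}) \cong (S^{\bullet}(\mathfrak{b}_{\1}^{*}))^{B_{\0}}$ with vanishing differential, where $B_{\0}$ is the Borel subgroup with Lie algebra $\mathfrak{b}_{\0} = \mathfrak{t}_{\0} \oplus \mathfrak{u}_{\0}$. My plan is to prove the equality
$$(S^n(\mathfrak{b}_{\1}^{*}))^{B_{\0}} \;=\; (S^n(\f_{\1}^{*}))^{T_{\0}},$$
which by (a) is precisely the right-hand side of (b). Property (d) of the construction ensures $\mathfrak{b}_{\1} = \f_{\1} \oplus \mathfrak{u}_{\1}$ splits as a $T_{\0}$-module, hence
$$S^n(\mathfrak{b}_{\1}^{*}) \;=\; \bigoplus_{p+q=n} S^p(\f_{\1}^{*}) \otimes S^q(\mathfrak{u}_{\1}^{*}).$$
The key input, read off from Section~\ref{SS:BBWparabolics}, is that the functional $\mathcal{H}$ cutting out the parabolic vanishes on every $T_{\0}$-weight of $\f_{\1}^{*}$ and is strictly positive on every $T_{\0}$-weight of $\mathfrak{u}_{\1}^{*}$. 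So the zero-weight subspace of $S^n(\mathfrak{b}_{\1}^{*})$ is forced into $q=0$, giving $(S^n(\mathfrak{b}_{\1}^{*}))^{T_{\0}} = (S^n(\f_{\1}^{*}))^{T_{\0}}$.

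For the $\mathfrak{u}_{\0}$-invariance, I would use that $\mathfrak{u}_{\1}$ is a $\mathfrak{b}_{\0}$-submodule of $\mathfrak{b}_{\1}$ (closure of the roots strictly below $\ker \mathcal{H}$), while the induced $\mathfrak{u}_{\0}$-action on the quotient $\mathfrak{b}_{\1}/\mathfrak{u}_{\1} \cong \f_{\1}$ is trivial because $[\mathfrak{u}_{\0},\mathfrak{b}_{\1}] \subseteq \mathfrak{u}_{\1}$. Dualizing identifies $\f_{\1}^{*}$ with the annihilator of $\mathfrak{u}_{\1}$ in $\mathfrak{b}_{\1}^{*}$ and shows that $\mathfrak{u}_{\0}$ acts trivially on $\f_{\1}^{*}$, hence on all of $S^{\bullet}(\f_{\1}^{*})$. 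Combined with the previous step this yields the displayed equality. Because the CE differential on the full complex already vanishes, cohomology equals the cochain level, and the restriction map supplies the asserted isomorphism. The main obstacle will be verifying the two structural inputs above---triviality of the $F_{\0}$-action on $\f_{\1}$, and strict $\mathcal{H}$-separation of the weights of $\f_{\1}$ from those of $\mathfrak{u}_{\1}$---uniformly over the classification; both are consequences of the explicit parabolic data compiled in Section~\ref{SS:BBWparabolics}, but extracting them cleanly is precisely what collapses the otherwise intricate $B_{\0}$-invariant calculation to a concrete $T_{\0}$-invariant computation on $S^{\bullet}(\f_{\1}^{*})$.
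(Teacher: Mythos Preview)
Your proposal is correct and follows essentially the same route as the paper. Both arguments identify $\operatorname{H}^{\bullet}(\mathfrak{a},\mathfrak{a}_{\0},\mathbb{C})\cong S^{\bullet}(\mathfrak{a}_{\1}^{*})^{A_{\0}}$ with vanishing differential, then prove (b) by a sandwich: the inclusion $S^{\bullet}(\mathfrak{f}_{\1}^{*})^{T_{\0}}\subseteq S^{\bullet}(\mathfrak{b}_{\1}^{*})^{B_{\0}}$ comes from the $B_{\0}$-module exact sequence $0\to\mathfrak{t}_{\1}^{*}\to\mathfrak{b}_{\1}^{*}\to\mathfrak{u}_{\1}^{*}\to 0$ together with $[\mathfrak{u}_{\0},\mathfrak{b}_{\1}]\subseteq\mathfrak{u}_{\1}$, while the reverse inclusion comes from $S^{\bullet}(\mathfrak{b}_{\1}^{*})^{B_{\0}}\subseteq S^{\bullet}(\mathfrak{b}_{\1}^{*})^{T_{\0}}=S^{\bullet}(\mathfrak{f}_{\1}^{*})^{T_{\0}}$. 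Your use of strict positivity of $\mathcal{H}$ on the weights of $\mathfrak{u}_{\1}^{*}$ is a cleaner justification of the last equality than the paper's somewhat terse treatment of that step.
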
 

\begin{proof} (a) Since $[{\mathfrak f}_{\0},{\mathfrak f}_{\1}]=0$ it follows that 
\begin{equation} 
\operatorname{H}^{\bullet}({\mathfrak f},{\mathfrak f}_{\0},{\mathbb C})\cong S^{\bullet}({\mathfrak f}^{*}_{\1})^{F_{\0}}\cong S^{\bullet}({\mathfrak f}^{*}_{\1}).
\end{equation} 

(b) Next observe that 
$$S^{n}({\mathfrak b}^{*}_{\1})^{F_{\0}} \cong S^{n}({\mathfrak f}^{*}_{\1}\oplus {\mathfrak u}^{*}_{\1})^{F_{\0}} \cong 
 \bigoplus_{i+j=n} S^{i}({\mathfrak f}^{*}_{\1})\otimes S^{j}({\mathfrak u}^{*}_{\1})^{F_{\0}} \cong  S^{n}({\mathfrak f}^{*}_{\1}).$$ 
The last isomorphism holds since (i) $S^{\bullet}({\mathfrak u}^{*}_{\1})^{F_{\0}}\subseteq S^{\bullet}({\mathfrak u}^{*}_{\1})^{T_{\0}}$ and 
(ii) the duals of roots in ${\mathfrak u}_{\1}^{*}$ under the $T_{\0}$-grading are positive (see Section~\ref{SS:parabolicsub}). It follows that 
$$S^{n}({\mathfrak b}^{*}_{\1})^{T_{\0}} \cong S^{n}({\mathfrak f}^{*}_{\1})^{T_{\0}}$$ 
and $\dim S^{n}({\mathfrak b}^{*}_{\1})^{B_{\0}} \leq \dim S^{n}({\mathfrak f}^{*}_{\1})^{T_{\0}}$ for $n\geq 0$. 

Since $\operatorname{H}^{\bullet}({\mathfrak b},{\mathfrak b}_{\0},{\mathbb C})\cong S^{\bullet}({\mathfrak b}^{*}_{\1})^{B_{\0}}$, 
the restriction map $\operatorname{H}^{\bullet}({\mathfrak b},{\mathfrak b}_{\0},{\mathbb C})\rightarrow 
\operatorname{H}^{\bullet}({\mathfrak t},{\mathfrak t}_{\0},{\mathbb C})$ is given by the restriction map on functions: 
\begin{equation}
S^{\bullet}({\mathfrak b}^{*}_{\1})^{B_{\0}}\rightarrow S^{\bullet}({\mathfrak t}^{*}_{\1})^{T_{\0}}. 
\end{equation}
Finally, observe that as $B_{\0}$-module, one has a short exact sequence 
$$0\rightarrow {\mathfrak u}_{\1} \rightarrow {\mathfrak b}_{\1} \rightarrow {\mathfrak t}_{\1} \rightarrow 0.$$ 
Therefore, 
$$0\rightarrow {\mathfrak t}_{\1}^{*} \rightarrow {\mathfrak b}_{\1}^{*} \rightarrow {\mathfrak u}_{\1}^{*} \rightarrow 0$$ 
with $B_{\0}$-acting trivially on ${\mathfrak t}_{\1}^{*}$. This shows there exists a subring $S\subseteq  S^{\bullet}({\mathfrak b}^{*}_{\1})^{B_{\0}}$ 
such that the restriction map induces an isomorphism of $S\cong S^{\bullet}({\mathfrak t}^{*}_{\1})^{T_{\0}}=S^{\bullet}({\mathfrak f}^{*}_{\1})^{T_{\0}}$. The statement of 
(b) now follows because $\dim S^{n}({\mathfrak b}^{*}_{\1})^{B_{\0}} \leq \dim S^{n}({\mathfrak t}^{*}_{\1})^{T_{\0}}$ for $n\geq 0$. 
\end{proof} 

\subsection{} We can now demonstrate how the relative cohomology for ${\mathfrak b}$ is related to the relative cohomology for ${\mathfrak g}$ and the
dual of the group algebra of $W_{\1}$. One can view this result as a functorial interpretation of the harmonic decomposition for $S^{\bullet}({\mathfrak f}_{\1}^{*})$. 

\begin{theorem} \label{T:paraproperties} Let ${\mathfrak g}$ be a classical simple Lie superalgebra. There exists a detecting subalgebra ${\mathfrak f}={\mathfrak f}_{\0}\oplus {\mathfrak f}_{\1}$ obtained 
by using the stable action of $G_{\0}$ on ${\mathfrak g}_{\1}$ and a proper parabolic subalgebra ${\mathfrak b}$ with the following properties 
\begin{itemize} 
\item[(a)] ${\mathfrak b}={\mathfrak b}_{\0}\oplus {\mathfrak b}_{\1}$ where ${\mathfrak b}_{\1}\cong {\mathfrak f}_{\1} 
\oplus {\mathfrak u}_{\1}$ and ${\mathfrak b}_{\0}$ is a Borel subalgebra for ${\mathfrak g}_{\0}$. 
\item[(b)] There exists a finite reflection group $W_{\1}$ isomorphic to $N/N_{\0}$ and a grading on the group algebra, 
${\mathbb C}[W_{\1}]$, such that as graded vector spaces, 
$$\operatorname{H}^{\bullet}({\mathfrak b},{\mathfrak b}_{\0},{\mathbb C})\cong 
\operatorname{H}^{\bullet}({\mathfrak g},{\mathfrak g}_{\0},{\mathbb C})\otimes {\mathbb C}[W_{\1}]_{\bullet}.$$ 
\end{itemize} 
\end{theorem}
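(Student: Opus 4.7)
The plan is to realize both sides of the proposed isomorphism as graded rings of invariants on the polynomial ring $S^{\bullet}(\f_{\1}^{*})$, then apply the Chevalley--Shephard--Todd theorem to the pseudoreflection group $W_{\1} = N/N_{0}$.

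Part (a) is an immediate restatement of properties (a)--(f) of Section~\ref{SS:parabolicdef}, so the content lies in part (b). For part (b), the first step is to identify both sides with invariant rings. Theorem~\ref{t:b-tcoho}(a)--(b) yields
\begin{equation*}
\operatorname{H}^{\bullet}(\mathfrak{b},\mathfrak{b}_{\0},\C) \;\cong\; S^{\bullet}(\f_{\1}^{*})^{T_{\0}},
\end{equation*}
while the invariant-theoretic isomorphism of \cite{BKN1} recalled in the Introduction gives
\begin{equation*}
\operatorname{H}^{\bullet}(\g,\g_{\0},\C) \;\cong\; S^{\bullet}(\f_{\1}^{*})^{N}.
\end{equation*}
It therefore suffices to produce a graded vector space isomorphism $S^{\bullet}(\f_{\1}^{*})^{T_{\0}} \cong S^{\bullet}(\f_{\1}^{*})^{N} \otimes \C[W_{\1}]_{\bullet}$.

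The second step is to replace $T_{\0}$-invariants by $N_{0}$-invariants. Here I would use the fact that $N_{0}$ is connected reductive and acts on $\f_{\1}$ through a torus, which, from the explicit description of the detecting subalgebras in \cite[Table 5]{BKN1}, coincides with the image of $T_{\0}$ in $\operatorname{GL}(\f_{\1})$; this gives
\begin{equation*}
S^{\bullet}(\f_{\1}^{*})^{T_{\0}} \;=\; S^{\bullet}(\f_{\1}^{*})^{N_{0}}.
\end{equation*}
The final step is then Chevalley--Shephard--Todd applied to the quotient $W_{\1} = N/N_{0}$ acting on the polynomial ring $S^{\bullet}(\f_{\1}^{*})^{N_{0}}$ by pseudoreflections:
\begin{equation*}
S^{\bullet}(\f_{\1}^{*})^{N_{0}} \;\cong\; S^{\bullet}(\f_{\1}^{*})^{N} \otimes \C[W_{\1}]_{\bullet}
\end{equation*}
as graded vector spaces, where $\C[W_{\1}]_{\bullet}$ is the coinvariant algebra of $W_{\1}$, whose Hilbert series encodes the length function on $W_{\1}$. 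Chaining the three isomorphisms yields part (b).

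The principal obstacle is the middle step: verifying both the equality $S^{\bullet}(\f_{\1}^{*})^{T_{\0}} = S^{\bullet}(\f_{\1}^{*})^{N_{0}}$ and the fact that the induced $W_{\1}$-action on $S^{\bullet}(\f_{\1}^{*})^{N_{0}}$ is by pseudoreflections on a polynomial ring. I expect both statements to require a case-by-case inspection across the families in Table~\ref{T:BBW-roots}, writing down $T_{\0}$, $N_{0}$, and $W_{\1}$ explicitly in each case; the tables and invariant-theoretic data in \cite{BKN1} should make this tractable but unavoidable, and once this bookkeeping is complete the three isomorphisms chain together to give the theorem.
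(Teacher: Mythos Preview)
Your overall framing is right---both sides are invariants on $S^{\bullet}(\f_{\1}^{*})$, and the identifications $\operatorname{H}^{\bullet}(\mathfrak b,\mathfrak b_{\0},\C)\cong S^{\bullet}(\f_{\1}^{*})^{T_{\0}}$ and $\operatorname{H}^{\bullet}(\g,\g_{\0},\C)\cong S^{\bullet}(\f_{\1}^{*})^{N}$ are exactly what the paper uses. The gap is in the final step. You propose to apply Chevalley--Shephard--Todd to $W_{\1}$ acting on $S^{\bullet}(\f_{\1}^{*})^{N_{0}}$, but this ring is \emph{not} a polynomial ring in several of the families: for $\mathfrak{sl}(n|n)$, $\mathfrak{psl}(n|n)$, $\mathfrak{psq}(n)$, and $\mathfrak p(n)$ the tables show $\operatorname{H}^{\bullet}(\mathfrak b,\mathfrak b_{\0},\C)$ is a hypersurface or worse (e.g.\ $\C[x_1y_1,\dots,x_ny_n,x_1\cdots x_n,y_1\cdots y_n]$ has the relation $(x_1\cdots x_n)(y_1\cdots y_n)=\prod x_iy_i$). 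So CST does not apply, and your description of $\C[W_{\1}]_{\bullet}$ as the coinvariant algebra with length-function grading is likewise not correct in general---indeed the paper warns immediately after the theorem that the grading on $\C[W_{\1}]_{\bullet}$ need not be the Poincar\'e series of $W_{\1}$.

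The paper sidesteps this by working one level up: it invokes the harmonic decomposition \cite[Theorem~3.5]{BKN1}
\[
S^{\bullet}(\f_{\1}^{*}) \;\cong\; S^{\bullet}(\f_{\1}^{*})^{N} \otimes [\operatorname{ind}_{H}^{N}\C]_{\bullet}
\]
on the full polynomial ring $S^{\bullet}(\f_{\1}^{*})$, and only \emph{then} takes $T_{\0}$-fixed points. Since $T_{\0}\leq N$ acts trivially on the first tensor factor, this gives $S^{\bullet}(\f_{\1}^{*})^{T_{\0}}\cong S^{\bullet}(\f_{\1}^{*})^{N}\otimes[\operatorname{ind}_{H}^{N}\C]^{T_{\0}}$, and a short induction-theoretic computation (using that $N_{0}$ is generated by $T_{\0}$ and $H$) identifies $[\operatorname{ind}_{H}^{N}\C]^{T_{\0}}$ with $\C[W_{\1}]$. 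This route never needs $S^{\bullet}(\f_{\1}^{*})^{T_{\0}}$ itself to be polynomial, so it works uniformly.
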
 

\begin{proof} Let ${\mathfrak b}$ be as in Section~\ref{SS:parabolicdef}. One has the harmonic decomposition (cf. \cite[Theorem 3.5]{BKN1}): 
\begin{equation} 
S^{\bullet}({\mathfrak f}_{\1}^{*})\cong S^{\bullet}({\mathfrak f}_{\1}^{*})^{N}\otimes [\text{ind}_{H}^{N}{\mathbb C}]_{\bullet}.
\end{equation} 
as graded $S^{\bullet}({\mathfrak f}_{\1}^{*})^{N}$-modules. Applying $T_{\0}$ fixed points and using the fact that $T_{\0}\leq N$ one can has 
\begin{equation} \label{e:invarianteq}
S^{\bullet}({\mathfrak f}_{\1}^{*})^{T_{\0}}\cong S^{\bullet}({\mathfrak f}_{\1}^{*})^{N}\otimes [\text{ind}_{H}^{N}{\mathbb C}]^{T_{\0}}_{\bullet}.
\end{equation} 
From the definition of the induced module, one has 
\begin{eqnarray*} 
\text{ind}_{H}^{N} {\mathbb C}&\cong& [\C[N]\otimes {\mathbb C}]^{H}\\
&\cong& \text{Hom}_{H}({\mathbb C},{\mathbb C}[N]) 
\end{eqnarray*} 
Now by applying $T_{\0}$ fixed points and using the fact that $N_{0}$ is generated by $T_{\0}$ and $H$: 
$$[\text{ind}_{H}^{N} {\mathbb C}]^{T_{\0}}\cong  [\text{Hom}_{H}({\mathbb C},{\mathbb C}[N])]^{T_{\0}}\cong  \text{Hom}_{N_{0}}({\mathbb C},{\mathbb C}[N]) \cong {\mathbb C}[W_{\1}].$$ 
Here ${\mathbb C}[W_{\1}]$ is the coordinate algebra of $W_{\1}$ which is dual to the group algebra of $W_{\1}$. 

Next one can use the isomorphisms: $\operatorname{H}^{\bullet}({\mathfrak b},{\mathfrak b}_{\0},{\mathbb C})\cong S^{\bullet}({\mathfrak f}_{\1}^{*})^{T_{\0}}$ 
by Theorem~\ref{t:b-tcoho}(b), and $\operatorname{H}^{\bullet}({\mathfrak g},{\mathfrak g}_{\0},{\mathbb C})\cong S^{\bullet}({\mathfrak f}_{\1}^{*})^{N}$ 
\cite[Theorem 4.1]{BKN1}. One can now reinterpret (\ref{e:invarianteq}) as 
\begin{equation} \label{e:cohocompare}
\operatorname{H}^{\bullet}({\mathfrak b},{\mathfrak b}_{\0},{\mathbb C}) \cong \operatorname{H}^{\bullet}({\mathfrak g},{\mathfrak g}_{\0},{\mathbb C}) \otimes {\mathbb C}[W_{\1}]_{\bullet}.
\end{equation} 
\end{proof} 

The reader should be made aware that the grading on ${\mathbb C}[W_{\1}]_{\bullet}$ is not always given by the Poincar\'e series for the finite reflection group $W_{\1}$. We will explore this important 
issue in the upcoming sections. 

\subsection{} Let $W$ be a finite reflection group and consider the Poincar\'e polynomial (cf. \cite[Section 1.11]{Hum})
\begin{equation} 
p_{W}(t)=\sum_{w\in W}t^{l(w)}
\end{equation}
Note that the coefficient of $t^{j}$ is precisely $|\{w\in W:\ l(w)=j \}|$. In general one has the identity
$$p_{W}(t) = \prod_{i=1}^{n}(1+t+\dots+t^{e_i}),$$
where $e_i$  are the exponents of $W$.
Set 
\begin{equation} 
z_{{\mathfrak b},{\mathfrak g}}(t)=p_{\mathfrak b}(t)/p_{\mathfrak g}(t)
\end{equation} 

We now provide some examples that show how to compute $z_{{\mathfrak b},{\mathfrak g}}(t)$. 

\begin{example} [${\mathfrak g}={\mathfrak q}(n)$ and $\mathfrak{gl}(m|n)$] Assume that $m\geq n$. 
One has $\operatorname{H}^{\bullet}({\mathfrak b},{\mathfrak b}_{\0},{\mathbb C})\cong S^{\bullet}({\mathfrak f}_{\1}^{*})^{T_{\0}}$. 
This implies that 
$$
\operatorname{H}^{\bullet}({\mathfrak b},{\mathfrak b}_{\0},{\mathbb C})\cong {\mathbb C}[z_{1},z_{2},\dots,z_{n}]
$$ 
where the degree of $z_{j}$ ($j=1,2,\dots,n$) is $1$ for ${\mathfrak q}(n)$ and $2$ for $\mathfrak{gl}(m|n)$. 
Furthermore, by \cite[Table 1]{BKN1}, 
$$
\operatorname{H}^{\bullet}({\mathfrak g},{\mathfrak g}_{\0},{\mathbb C})\cong {\mathbb C}[z_{1},z_{2},\dots,z_{n}]^{\Sigma_{n}}.
$$ 
Hence, $\operatorname{H}^{\bullet}({\mathfrak g},{\mathfrak g}_{\0},{\mathbb C})$ is a polynomial algebra generated in 
degrees $1,2,\dots,n$. Therefore, 
$z_{{\mathfrak b},{\mathfrak g}}(t)=p_{\Sigma_{n}}(t^{r})$ 
where $r=1$ for ${\mathfrak q}(n)$ and $r=2$ for $\mathfrak{gl}(m|n)$. 
\end{example}

\begin{example}[${\mathfrak g}=D(2,1,\alpha),\ G(3),\ F(4)$] A direct computation shows that 
$\operatorname{H}^{\bullet}({\mathfrak b},{\mathfrak b}_{\0},{\mathbb C})\cong {\mathbb C}[z]$ 
where $z$ is of degree $2$. From \cite[Table 1]{BKN1}, $\operatorname{H}^{\bullet}({\mathfrak g},{\mathfrak g}_{\0},{\mathbb C})$ 
is a polynomial algebra generated in degree $4$. Therefore,
$$z_{{\mathfrak b},{\mathfrak g}}(t)=\frac{1-t^{4}}{1-t^{2}}=1+t^{2}=p_{\Sigma_{2}}(t^{2}).$$
\end{example} 

One can compute $z_{{\mathfrak b},{\mathfrak g}}(t)$ for the other classical simple Lie superalgebras by using the 
ideas presented in the preceding examples. Table \ref{T:Poincareseries} provides the relationship between $z_{{\mathfrak b},{\mathfrak g}}(t)$ and 
the Poincar\'e polynomial for $W_{\1}$ for other classical simple Lie superalgebras. Note that the $x$'s, $y$'s, and $z$'s have degree one. 
We can summarize these results in the following theorem. 

\begin{theorem}\label{T:zcompute} Let ${\mathfrak g}$ be a classical simple Lie superalgebra. Assume that ${\mathfrak g}$ 
is not isomorphic to $P(n)$. 
There exists a detecting subalgebra ${\mathfrak f}={\mathfrak f}_{\0}\oplus {\mathfrak f}_{\1}$ obtained 
by using the stable action of $G_{\0}$ on ${\mathfrak g}_{\1}$ and a parabolic subalgebra ${\mathfrak b}$ such that 
$z_{{\mathfrak b},{\mathfrak g}}(t)=p_{W_{\1}}(s)$, where $s=t$ for Lie algebras ${\mathfrak g}$ of type $Q$, and $s=t^{2}$ otherwise. 
\end{theorem}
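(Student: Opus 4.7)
The plan is to establish the formula by a case-by-case verification through the classification of classical simple Lie superalgebras other than $P(n)$, in the spirit of the two examples immediately preceding the statement. Those examples handle $\mathfrak{q}(n)$, $\mathfrak{gl}(m|n)$ (so also $\mathfrak{sl}(m|n)$ and $\mathfrak{psl}(n|n)$), and the exceptional superalgebras $D(2,1,\alpha)$, $G(3)$, $F(4)$; the remaining cases are $\mathfrak{psq}(n)$ and the orthosymplectic families $\mathfrak{osp}(2m|2n)$ and $\mathfrak{osp}(2m+1|2n)$.

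The uniform strategy combines Theorem~\ref{t:b-tcoho}(b), which gives $\operatorname{H}^{\bullet}(\mathfrak{b},\mathfrak{b}_{\0},\mathbb{C}) \cong S^{\bullet}(\mathfrak{f}_{\1}^{*})^{T_{\0}}$, with the identification $\operatorname{H}^{\bullet}(\mathfrak{g},\mathfrak{g}_{\0},\mathbb{C}) \cong S^{\bullet}(\mathfrak{f}_{\1}^{*})^{N}$ from \cite[Theorem 4.1]{BKN1}. For each family I would first describe $\mathfrak{f}_{\1}$ and the $T_{\0}$-action explicitly. Outside of the Q-type algebras, the $T_{\0}$-weights on $\mathfrak{f}_{\1}^{*}$ come in opposite pairs $\pm\alpha_i$, so the $T_{\0}$-invariants in $S^{\bullet}(\mathfrak{f}_{\1}^{*})$ form a polynomial ring ${\mathbb C}[z_1,\dots,z_k]$ on quadratic generators $z_i=x_iy_i$, giving $r=2$; for $\mathfrak{q}(n)$ and $\mathfrak{psq}(n)$ the $T_{\0}$-weights on $\mathfrak{f}_{\1}$ vanish, yielding degree-one generators and $r=1$.

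Next, I would identify the residual $W_{\1}=N/N_0$ action on ${\mathbb C}[z_1,\dots,z_k]$ with its standard reflection representation (symmetric group in the A and Q cases, a hyperoctahedral-type group in the osp cases, $\Sigma_2$ in the exceptional cases), using the descriptions of $N$ and $W_{\1}$ underlying Table~\ref{T:Poincareseries}. Chevalley--Shephard--Todd then identifies $S^{\bullet}(\mathfrak{f}_{\1}^{*})^{N} = {\mathbb C}[z_1,\dots,z_k]^{W_{\1}}$ as a polynomial ring with fundamental degrees $rd_1,\dots,rd_k$, where $d_1,\dots,d_k$ are the degrees of $W_{\1}$. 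Taking the ratio of Hilbert series yields
\[
z_{\mathfrak{b},\mathfrak{g}}(t) \;=\; \prod_{i=1}^{k} \frac{1-t^{rd_i}}{1-t^{r}} \;=\; p_{W_{\1}}(t^{r}),
\]
by the standard product formula for the Poincar\'e polynomial of a finite reflection group in terms of its exponents; this agrees with the coinvariant-ring interpretation of $\mathbb{C}[W_{\1}]_{\bullet}$ coming out of Theorem~\ref{T:paraproperties}(b).

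The main obstacle is, in each family, verifying that the $W_{\1}$-action on the explicit $T_{\0}$-invariants $z_1,\dots,z_k$ coincides with the reflection representation of $W_{\1}$ with the expected fundamental degrees. This is most delicate for the orthosymplectic algebras, where the interplay between the even and odd dot actions on the subset $\Phi_{\1}$ of odd roots must be carefully tracked to ensure that the invariants produce a hyperoctahedral-type Poincar\'e polynomial (with the correct sign-change reflections) rather than only a symmetric-group piece or a twisted variant. Once the per-family identifications are assembled, the conclusion for all $\mathfrak{g}\not\cong P(n)$ follows uniformly.
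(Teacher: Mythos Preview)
Your approach is essentially the one the paper takes: a case-by-case computation of $p_{\mathfrak b}(t)$ and $p_{\mathfrak g}(t)$ via the explicit description of $S^\bullet(\mathfrak f_{\1}^*)^{T_{\0}}$ and $S^\bullet(\mathfrak f_{\1}^*)^{N}$, with the results recorded in Table~\ref{T:Poincareseries}. The paper's own argument is little more than the two examples together with a pointer to that table, so your more systematic framing via Chevalley--Shephard--Todd is a reasonable way to organize the same verification.

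There are two points to correct. First, the clean statement that $S^\bullet(\mathfrak f_{\1}^*)^{T_{\0}}=\mathbb C[z_1,\dots,z_k]$ with $z_i=x_iy_i$ is not valid for all the algebras you list as already handled. For $\mathfrak{sl}(n|n)$ and $\mathfrak{psl}(n|n)$ the maximal torus is smaller than in $\mathfrak{gl}(n|n)$ (the weights $\epsilon_i-\delta_i$ satisfy a linear relation), so additional $T_{\0}$-invariants $x_1\cdots x_n$ and $y_1\cdots y_n$ appear; similarly $\mathfrak{psq}(n)$ is not literally covered by the $\mathfrak q(n)$ computation. In these cases $\operatorname H^\bullet(\mathfrak b,\mathfrak b_{\0},\mathbb C)$ is not the polynomial ring in the $z_i$ alone, and one needs an extra freeness argument (exactly as the paper carries out for $\mathfrak p(2l)$ in Section~5.2) to see that the ratio of Hilbert series still collapses to $p_{W_{\1}}(t^r)$.

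Second, the obstacle you identify for the orthosymplectic cases is misplaced: the even and odd dot actions on $\Phi_{\1}$ are the machinery used later, in Section~4, to compute the sheaf-cohomological quantity $p_{G,B}(t)$, and play no role in Theorem~\ref{T:zcompute}. What actually needs checking here is purely invariant-theoretic: that the induced $W_{\1}$-action on the $T_{\0}$-invariants $z_i$ realises $W_{\1}$ as a reflection group with the expected fundamental degrees. That is a concrete verification using the description of $N$ from \cite{BKN1}, not a dot-action computation.
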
 

\section{Connections with the geometry of $G/B$} 

\subsection{Supergroups and the Induction Functor} Let $G$ be an affine supergroup scheme over ${\mathbb C}$ and $\text{Mod}(G)$ 
be the category of rational modules for $G$. For a general overview and details about supergroup schemes, the reader is referred to work 
of Brundan and Kleshchev \cite[Sections 2,4,5]{BruKl} \cite[Section 2]{Bru}. 

In the case when ${\mathfrak g}$ is a classical Lie superalgebra and ${\mathfrak g}=\text{Lie }G$, the category $\text{Mod}(G)$ is equivalent to locally finite integral 
modules for $\text{Dist}(G)=U({\mathfrak g})$ (cf. \cite[Corollary 5.7]{BruKl}). In particular, if 
${\mathfrak g}$ is a classical Lie superalgebra, then $\text{Mod}(G)$ is equivalent to ${\mathcal C}_{({\mathfrak g},{\mathfrak g}_{\0})}$ 
(i.e., the category of ${\mathfrak g}$-supermodules that are completely reducible over ${\mathfrak g}_{\0}$). 

Let $H$ be a closed subgroup scheme of $G$ and $R^{j}\text{ind}_{H}^{G}(-)$ be the higher right derived functors of the induction functor $\text{ind}_{H}^{G}(-)$. 
In the case when ${\mathfrak g}=\text{Lie }G$ is a classical simple Lie superalgebra and $H=P$ where $P$ is a parabolic subgroup, the following two propositions provide information about $R^{\bullet}\operatorname{ind}_{P}^{G} M$ when restricted to $G_{\0}$. 

\begin{proposition} \label{P:Gzeroiso} Let ${\mathfrak g}=\operatorname{Lie }G$ be a classical simple Lie superalgebra and $P$ be a parabolic 
subgroup with $M$ a $P$-module. 
\begin{itemize} 
\item[(a)] Assume that 
$R^{n}\operatorname{ind}_{P_{\0}}^{G_{\0}} [M \otimes \Lambda^{i}(({\mathfrak g}_{\1}/{\mathfrak p}_{\1})^{*})]=0$ for $n$-odd, and $n$-even when $n\neq i$. 
Then 
$$(R^{n}\operatorname{ind}_{P}^{G} M)|_{G_{\0}}\cong R^{n}\operatorname{ind}_{P_{\0}}^{G_{\0}} [M \otimes \Lambda^{\bullet}(({\mathfrak g}_{\1}/{\mathfrak p}_{\1})^{*})]$$
for $n\geq 0$. 
\item[(b)] Assume that $M\cong {\mathbb C}$ and 
$R^{n}\operatorname{ind}_{P_{\0}}^{G_{\0}} [\Lambda^{i}(({\mathfrak g}_{\1}/{\mathfrak p}_{\1})^{*})]=0$ for $i\neq n$. Then 
$$(R^{n}\operatorname{ind}_{P}^{G} {\mathbb C})|_{G_{\0}}\cong R^{n}\operatorname{ind}_{P_{\0}}^{G_{\0}} [\Lambda^{\bullet}(({\mathfrak g}_{\1}/{\mathfrak p}_{\1})^{*})]$$
for $n\geq 0$. 
\end{itemize} 
\end{proposition} 

\begin{proof} We will employ results provided in the exposition given in \cite[Section 2]{Bru}. Let $X=G/P$ and $M$ be a $P$-module, with ${\mathcal L}(M)$ being the 
associated quasi-coherent ${\mathcal O}_{X}G$-(super)module. First note that $H^{n}(G/P,{\mathcal L}(M))\cong R^{n}\text{ind}_{P}^{G}M$ for all $n\geq 0$. Now according to 
\cite[(6)]{Bru}, one has 
\begin{equation} 
R^{n}\text{ind}_{P}^{G}M|_{G_{\0}} \cong H^{n}(G/P,{\mathcal L}(M))|_{G_{\0}}\cong H^{n}(G/P,\text{res}^{G}_{G_{\0}}({\mathcal L}(M))) 
\end{equation} 
for all $n\geq 0$. 
Next observe that by \cite[(2), Theorem 2.7]{Bru} there exists a canonical filtration of ${\mathcal L}(M)$: 
$${\mathcal J}^{0}={\mathcal L}(M)\supseteq {\mathcal J}^{1} \supseteq {\mathcal J}^{2} \supseteq \dots {\mathcal J}^{t-1}\supseteq {\mathcal J}^{t}=\{0\}$$ 
with 
\begin{equation} 
{\mathcal J}^{i}/{\mathcal J}^{i+1}\cong {\mathcal L}_{ev}(M \otimes \Lambda^{i}(({\mathfrak g}_{\1}/{\mathfrak p}_{\1})^{*}))
\end{equation} 
Finally,  by \cite[equation after (2)]{Bru} one has the following isomorphisms: 
\begin{equation} 
H^{n}(G/P,{\mathcal J}^{i}/{\mathcal J}^{i+1})\cong H^{j}(G_{\0}/P_{\0}, {\mathcal L}_{ev}(M \otimes \Lambda^{i}(({\mathfrak g}_{\1}/{\mathfrak p}_{\1})^{*}))\cong 
R^{n}\text{ind}_{P_{\0}}^{G_{\0}} [M \otimes \Lambda^{i}(({\mathfrak g}_{\1}/{\mathfrak p}_{\1})^{*})].
\end{equation} 

(a) The filtration described above yields the short exact sequence:
$$0\rightarrow {\mathcal J}^{i+1} \rightarrow {\mathcal J}^{i} \rightarrow {\mathcal J}^{i}/{\mathcal J}^{i+1} \rightarrow 0.$$
Next apply the long exact sequence in cohomology, and use the fact that $H^{n}(G/P,{\mathcal J}^{i}/{\mathcal J}^{i+1})=0$ for $n$-odd and $i\geq 0$ to 
obtains a five term exact sequences for $(n-1)$-even: 
\begin{eqnarray*} \label{eq:fiveterm}
0 &\rightarrow& H^{n-1}(G/P, {\mathcal J}^{i+1}) \rightarrow H^{n-1}(G/P, {\mathcal J}^{i}) \rightarrow H^{n-1}(G/P, {\mathcal J}^{i}/{\mathcal J}^{i+1}) \\
   &\rightarrow& H^{n}(G/P, {\mathcal J}^{i+1}) \rightarrow H^{n}(G/P, {\mathcal J}^{i}) \rightarrow 0.
\end{eqnarray*} 

Using these five term sequences, we first show that $H^{n}(G/P,{\mathcal J}^{i})=0$ for $n$-odd and all $i\geq 0$. First consider the case when $i+1=t$. Then 
$H^{n}(G/P, {\mathcal J}^{i+1})=0$ for all $n\geq 0$ and from the sequences above $H^{n}(G/P, {\mathcal J}^{t-1})=0$ for $n$-odd. Now apply the process 
again for $i+1=t-1$, and the prior result to show that $H^{n}(G/P, {\mathcal J}^{t-2})=0$ for $n$-odd. Continuing this process proves that 
$H^{n}(G/P,{\mathcal J}^{i})=0$ for $n$-odd and $i\geq 0$ and the statement of part (a) of the theorem in the case when $n$ is odd. 

Next we finish off the statement of part (a) when $n$ is even. From the results in the prior paragraph, the five term exact sequences become short exact sequences of the form: 
\begin{equation} \label{eq:ses}
0\rightarrow H^{n}(G/P, {\mathcal J}^{i+1}) \rightarrow H^{n}(G/P, {\mathcal J}^{i}) \rightarrow H^{n}(G/P, {\mathcal J}^{i}/{\mathcal J}^{i+1}) \rightarrow 0.
\end{equation} 
for $n\geq 0$ (here $n$ can be either even or odd).  
Using these short exact sequences, we can conclude that for $i\leq n$ 
\begin{equation} 
H^{n}(G/P,{\mathcal J}^{0})=H^{n}(G/P,{\mathcal J}^{i}) 
\end{equation} 
and for $n<i$, 
\begin{equation} 
H^{n}(G/P,{\mathcal J}^{i})=0.
\end{equation} 
Combining these equations and using the assumptions in the theorem, it follows that as $G_{\0}$-module, 
$$H^{n}(G/P,{\mathcal J}^{0})=H^{n}(G/P,{\mathcal J}^{n}/{\mathcal J}^{n+1})=H^{n}(G/P,\oplus_{i\geq 0}{\mathcal J}^{i}/{\mathcal J}^{i+1}).$$ 
The result now follows by applying the identifications provided in the first paragraph. 

(b) In order to prove the statement we need to consider the $\pi$-graded 
category of $G$-(super)modules where $\pi={\mathbb Z}_{2}$. In this category, the simple modules consist of the simple $G$-modules with their images under the 
parity change functor $\Pi$. In particular, one has the trivial module ${\mathbb C}$ with trivial $\pi$-action and the module 
$\Pi{\mathbb C}$ with trivial $G$-action and the non-trivial element in $\pi$ acting as $(-1)$. Denote the graded category by 
$\pi$-$({\mathfrak g},{\mathfrak g}_{\0})$. 

The short exact sequence 
$$0\rightarrow {\mathcal J}^{i+1} \rightarrow {\mathcal J}^{i} \rightarrow {\mathcal J}^{i}/{\mathcal J}^{i+1} \rightarrow 0.$$
along with the the long exact sequence in cohomology and the fact that $H^{k}(G/P,{\mathcal J}^{i}/{\mathcal J}^{i+1})=0$ for $i\neq k$ yields the 
the five term exact sequence: 
\begin{eqnarray*} \label{eq:fiveterm}
0 &\rightarrow& H^{i}(G/P, {\mathcal J}^{i+1}) \rightarrow H^{i}(G/P, {\mathcal J}^{i}) \rightarrow H^{i}(G/P, {\mathcal J}^{i}/{\mathcal J}^{i+1}) \\
   &\rightarrow& H^{i+1}(G/P, {\mathcal J}^{i+1}) \rightarrow H^{i+1}(G/P, {\mathcal J}^{i}) \rightarrow 0.
\end{eqnarray*} 
Moreover, one obtains the following isomorphisms: 
\begin{equation} \label{eq:isos} 
H^{k}(G/P,{\mathcal J}^{i+1})\cong H^{k}(G/P,{\mathcal J}^{i})\ \ \ \text{for $k<i$ and $k>i+1$}. 
\end{equation} 
Now fix $n\geq 0$. From the isomorphisms in (\ref{eq:isos}), 
\begin{equation} \label{eq:iso1}
H^{n}(G/P,{\mathcal J}^{0})\cong H^{n}(G/P,{\mathcal J}^{1})\cong \dots \cong H^{n}(G/P,{\mathcal J}^{n-1})\cong H^{n}(G/P,{\mathcal J}^{n}),
\end{equation} 
\begin{equation} \label{eq:iso2} 
0=H^{n}(G/P,{\mathcal J}^{t})\cong H^{n}(G/P,{\mathcal J}^{t-1})\cong \dots \cong H^{n}(G/P,{\mathcal J}^{n+2})\cong H^{n}(G/P,{\mathcal J}^{n+1}),
\end{equation} 
One can use the five term sequence above along with (\ref{eq:iso1}) and (\ref{eq:iso2}) to obtain a four term exact sequnce 
\begin{equation*}
0 \rightarrow  H^{n}(G/P, {\mathcal J}^{0}) \rightarrow H^{n}(G/P, {\mathcal J}^{n}/{\mathcal J}^{n+1}) 
\rightarrow  H^{n+1}(G/P, {\mathcal J}^{n+1}) \rightarrow  H^{n+1}(G/P, {\mathcal J}^{n}) \rightarrow 0
\end{equation*} 
From the exact sequence and the isomorphism $R^{n}\text{ind}_{P}^{G} {\mathbb C}|_{G_{\0}}\cong H^{n}(G/P, {\mathcal J}^{0})$, one has an 
injection: 
$$f_{n}:R^{n}\text{ind}_{P}^{G} {\mathbb C}|_{G_{\0}}\hookrightarrow H^{n}(G/P, {\mathcal J}^{n}/{\mathcal J}^{n+1}).$$ 
The statement of part (b) will now follow if we show that $f_{n}$ is an isomorphism for all $n\geq 0$. Using the hypothesis and \cite[Corollary 2.8]{Bru}, one has 
$$\sum_{n\geq 0} (-1)^{n} R^{n}\text{ind}_{P}^{G} {\mathbb C}=\sum_{n\geq 0} (-1)^{n} H^{n}(G/P,{\mathcal J}^{n}/{\mathcal J}^{n+1}).$$ 
Here the sum is taken in the Grothendieck group of $\pi$-graded $G_{\0}$-modules. Using the fact that $f_{n}$ is an injection, the simple modules appearing in $H^{n}(G/P,{\mathcal J}^{n}/{\mathcal J}^{n+1})$ and $R^{n}\text{ind}_{P}^{G} {\mathbb C}$ have the same parity (depending on the parity of $n$, see \cite[Lemma 4.4]{Bru}). 

It follows that 
$$\sum_{n\geq 0} R^{2n}\text{ind}_{P}^{G} {\mathbb C}=\sum_{n\geq 0} H^{2n}(G/P,{\mathcal J}^{2n}/{\mathcal J}^{2n+1}),$$
$$\sum_{n\geq 0} R^{2n+1}\text{ind}_{P}^{G} {\mathbb C}=\sum_{n\geq 0} H^{2n+1}(G/P,{\mathcal J}^{2n+1}/{\mathcal J}^{2n+2}).$$
Hence, 
$$\sum_{n\geq 0} \dim R^{n}\text{ind}_{P}^{G} {\mathbb C}=\sum_{n\geq 0} \dim H^{n}(G/P,{\mathcal J}^{n}/{\mathcal J}^{n+1}).$$
This proves that $f_{n}$ is an isomorphism for all $n$. 
 \end{proof} 

\begin{proposition} \label{P:Gzeroiso2} Let ${\mathfrak g}=\operatorname{Lie }G$ be a classical simple Lie superalgebra and $P$ be a parabolic 
subgroup with $M$ a $P$-module.  Assume that 
$R^{j}\operatorname{ind}_{P_{\0}}^{G_{\0}} [M \otimes \Lambda^{\bullet}(({\mathfrak g}_{\1}/{\mathfrak p}_{\1})^{*})]=0$ for $j> 0$. 
Then 
$$(R^{j}\operatorname{ind}_{P}^{G} M)|_{G_{\0}}\cong R^{j}\operatorname{ind}_{P_{\0}}^{G_{\0}} [M \otimes \Lambda^{\bullet}(({\mathfrak g}_{\1}/{\mathfrak p}_{\1})^{*})]$$
for $j\geq 0$. 
\end{proposition} 

\begin{proof} We use the setting as described in Proposition~\ref{P:Gzeroiso}. For $j>0$, $H^{j}(G/P,{\mathcal J}^{i}/J^{i+1})=0$ for all $i$. It follows that $H^{j}(G/P,{\mathcal J}^{i})=0$ for 
all $i$, thus $R^{j}\operatorname{ind}_{P}^{G} M=0$ for $j>0$. 

Now consider the case when $j=0$. For each $i$, one has the short exact sequence 
$$0\rightarrow {\mathcal J}^{i+1} \rightarrow {\mathcal J}^{i} \rightarrow {\mathcal J}^{i}/{\mathcal J}^{i+1} \rightarrow 0.$$
Applying the long exact sequence in cohomology and using the fact that $H^{1}(G/P,{\mathcal J}^{i})=0$ yields a 
short exact sequence: 
$$0\rightarrow H^{0}(G/P,{\mathcal J}^{i+1}) \rightarrow H^{0}(G/P,{\mathcal J}^{i}) \rightarrow H^{0}(G/P,{\mathcal J}^{i}/{\mathcal J}^{i+1})\rightarrow 0.$$
For each $i$, this short exact sequence splits over $G_{\0}$ and one can deduce that 
$$(R^{0}\operatorname{ind}_{P}^{G} M)|_{G_{\0}}\cong H^{0}(G/P,{\mathcal J}^{0})\cong \oplus_{i} \operatorname{H}^{0}(G/P,{\mathcal J}^{i}/{\mathcal J}^{i+1})\cong R^{0}\operatorname{ind}_{P_{\0}}^{G_{\0}} [M \otimes \Lambda^{\bullet}(({\mathfrak g}_{\1}/{\mathfrak p}_{\1})^{*}].$$

\end{proof} 
The theorem above justifies the statement of \cite[Proposition 6.1.1]{BKN4} when the additional hypothesis is added. The results in \cite[Proposition 6.5.3]{BKN4} can be justified by using Theorem~\ref{P:Gzeroiso2}. 

Let $P$ be a parabolic subgroup with $P \subseteq G$ and let 
\begin{equation} 
p_{G,P}(t)=\sum_{i=0}^{\infty} \dim R^{i}\text{ind}_{P}^{G} {\mathbb C}\ t^{i}.
\end{equation}

The following proposition will be useful in making the transition from computing $R^{\bullet}\operatorname{ind}_{B_{\0}}^{G_{\0}}\ \Lambda^{\bullet}(({\mathfrak g}_{\1}/{\mathfrak b}_{\1})^{*})$ 
to computing $p_{G,B}(t)$ where $B$ is the parabolic defined in Section~\ref{SS:BBWparabolics}. 

\begin{proposition}\label{P:selfext} Let $j\geq 0$. 
\begin{itemize} 
\item[(a)] If ${\mathfrak g}\neq {\mathfrak q}(n)$ and $(R^{j}\operatorname{ind}_{B}^{G} {\mathbb C})|_{G_{\0}}\cong 
{\mathbb C}^{\oplus t}$, then $R^{j}\operatorname{ind}_{B}^{G} {\mathbb C}\cong 
{\mathbb C}^{\oplus t}$ as a $G$-module. 
\item[(b)] If ${\mathfrak g}= {\mathfrak q}(n)$ with $(R^{j}\operatorname{ind}_{B}^{G} {\mathbb C})|_{G_{\0}}\cong {\mathbb C}^{\oplus t}$ and 
$(R^{j}\operatorname{ind}_{B}^{G} {\mathbb C})|_{G_{\0}}\cong R^{j}\operatorname{ind}_{B_{\0}}^{G_{\0}} \Lambda^{k}(({\mathfrak g}_{\1}/{\mathfrak b}_{\1})^{*})$
for some $k\geq 0$, then $R^{j}\operatorname{ind}_{B}^{G} {\mathbb C}\cong {\mathbb C}^{\oplus t}$ as a $G$-module. 
\end{itemize} 
\end{proposition}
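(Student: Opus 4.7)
The plan is to reduce both parts to showing that $\mathfrak{g}_{\1}$ acts trivially on $M := R^{j}\operatorname{ind}_{B}^{G} \mathbb{C}$. Given that $M|_{G_{\0}} \cong \mathbb{C}^{\oplus t}$ as a $G_{\0}$-module, the $\mathfrak{g}_{\1}$-action on $M$ is recorded by a $G_{\0}$-equivariant (odd) linear map $\rho:\mathfrak{g}_{\1}\to \operatorname{End}_{\mathbb{C}}(M)$. Because $\operatorname{End}_{\mathbb{C}}(M)\cong M\otimes M^{*}$ is then also a trivial $G_{\0}$-module, and $G_{\0}$ is reductive, the space of such equivariant maps is
\[
\operatorname{Hom}_{G_{\0}}\bigl(\mathfrak{g}_{\1}, \operatorname{End}_{\mathbb{C}}(M)\bigr) \cong \bigl((\mathfrak{g}_{\1})^{G_{\0}}\bigr)^{\oplus t^{2}}.
\]
Once $\rho=0$ is established, the $G$-action on $M$ factors through $G_{\0}$, so $M\cong \mathbb{C}^{\oplus t}$ as a $G$-module.

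For part (a), I would verify case-by-case from the explicit decomposition of $\mathfrak{g}_{\1}$ as a $G_{\0}$-module (as recorded in \cite{BKN1}) that $(\mathfrak{g}_{\1})^{G_{\0}} = 0$ for every classical simple Lie superalgebra other than $\mathfrak{q}(n)$. In the $A(m|n)$ cases $\mathfrak{g}_{\1}$ is a sum of tensor products of standard and dual standard representations of the two $GL$-factors; for $\mathfrak{osp}(m|2n)$ and the exceptional types $D(2,1,\alpha),G(3),F(4)$ the $\mathfrak{g}_{\0}$-module $\mathfrak{g}_{\1}$ is irreducible and nontrivial; and for the $P(n)$ family $\mathfrak{g}_{\1}$ decomposes into (co)symmetric tensors of the standard module of $\mathfrak{gl}_{n}$. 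None of these contains a trivial summand, so $\rho=0$ and part (a) follows.

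For part (b), the case $\mathfrak{g}=\mathfrak{q}(n)$, the identification $\mathfrak{g}_{\1}\cong \mathfrak{g}_{\0}=\mathfrak{gl}_{n}$ as $G_{\0}$-module under the adjoint action shows that $(\mathfrak{g}_{\1})^{G_{\0}}$ is one-dimensional, spanned by the odd element $x_{0}$ corresponding to the identity matrix. The argument of (a) shows that the complement of $\langle x_{0}\rangle$ in $\mathfrak{g}_{\1}$ acts trivially on $M$; it remains to prove $x_{0}\cdot M=0$. Here I would exploit the second hypothesis: since $M|_{G_{\0}}\cong R^{j}\operatorname{ind}_{B_{0}}^{G_{0}} \Lambda^{k}\bigl((\mathfrak{g}_{\1}/\mathfrak{b}_{\1})^{*}\bigr)$ is obtained from a single exterior power, $M$ is concentrated in the single $\mathbb{Z}_{2}$-degree $k\bmod 2$. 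Since $x_{0}$ is odd it would have to carry $M$ into the $\mathbb{Z}_{2}$-component of the opposite parity, which is zero; hence $x_{0}\cdot M=0$ and (b) follows as in (a).

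The main obstacle I anticipate is the uniform verification that $(\mathfrak{g}_{\1})^{G_{\0}}=0$ in (a), which requires a case-by-case inspection of the table of classical simple Lie superalgebras; the $P(n)$ family (including the enlargement $\widetilde{\mathfrak{p}}(n)$) deserves particular attention. A secondary subtlety is that the parity bookkeeping in (b) must be carried out within the underlying even category of Section~\ref{S:prelim}, so that the single-parity concentration of $M$ coming from the hypothesis on $\Lambda^{k}$ is genuinely a statement about $M$ as a supermodule and not merely about its restriction.
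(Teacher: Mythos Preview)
Your argument is correct and is essentially the same as the paper's: the paper phrases part (a) as the vanishing of self-extensions $\operatorname{Ext}^{1}_{(\mathfrak g,\mathfrak g_{\0})}(\mathbb C,\mathbb C)\cong S^{1}(\mathfrak g_{\1}^{*})^{G_{\0}}$, which is exactly your condition $(\mathfrak g_{\1})^{G_{\0}}=0$ (up to a harmless dual), and for part (b) the paper likewise uses that the composition factors all have one parity while the nontrivial self-extension $\mathbb C_{\text{odd}}$ is odd. Your direct analysis of the equivariant action map $\rho$ is just an unpacking of that $\operatorname{Ext}^{1}$ computation.
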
 

\begin{proof} (a) The statement follows immediately if there are no self-extensions of the trivial module, that is, 
$\text{Ext}^{1}_{({\mathfrak g},{\mathfrak g}_{\0})}({\mathbb C},{\mathbb C})=0$. This space identifies 
with $S^{1}({\mathfrak g}_{\1})^{G_{\0}}$. For all types other than ${\mathfrak g}={\mathfrak q}(n)$ this is always equal to zero 
(cf. \cite[Table 1]{BKN1}). 

(b) Let ${\mathfrak g}={\mathfrak q}(n)$. For $j>0$, the simple modules appearing as $G$-composition 
factors in the $\pi$-$G$-module, $R^{j}\operatorname{ind}_{B}^{G} {\mathbb C}$, all have the same parity.  (i.e., they are either all ${\mathbb C}$ or all $\Pi{\mathbb C}$). 
See \cite[Lemma 4.4.]{Bru}.

Since 
$$\text{Ext}^{1}_{\text{$\pi$-$({\mathfrak g},{\mathfrak g}_{\0})$}}(\Pi{\mathbb C},\Pi{\mathbb C}) \cong \text{Ext}^{1}_{\text{$\pi$-$({\mathfrak g},{\mathfrak g}_{\0})$}}({\mathbb C},{\mathbb C})\cong S^{1}({\mathfrak g}_{\1}^{*})^{\text{$\pi$-$G_{\0}$}}\subseteq  ({\mathfrak g}_{\1})^{\pi}=0,$$ 
it follows by using the hypothesis that $R^{j}\operatorname{ind}_{B}^{G} {\mathbb C}$ as a $\pi$-$G$-module is isomorphic to 
${\mathbb C}^{\oplus t}$ if $j$ is even and $\Pi {\mathbb C}^{\oplus t}$ if $j$ is odd. Hence, as $G$-module (disregarding the grading), $R^{j}\operatorname{ind}_{B}^{G} {\mathbb C}\cong {\mathbb C}^{\oplus t}$. 

\end{proof}

\subsection{Poincare series for exceptional Lie superalgebras} In the following theorem, 
we compute $p_{G,B}(t)$ for exceptional Lie superalgebras. Although $p_{G,B}(t)$ 
is a polynomial of degree 2, the verification extensively uses the representation theory of 
$\mathfrak{sl}_{2}$, $G_{2}$ and $\mathfrak{so}_{7}$ along with the classical Bott-Borel-Weil (BBW) theorem. 

\begin{theorem}\label{T:exceptionalsheaf} Let ${\mathfrak g}=D(2,1,\alpha)$, $G(3)$ or $F(4)$ and 
${\mathfrak b}$ be the parabolic subalgebra described in Table~\ref{T:BBW-roots}. 
Then 
$$p_{G,B}(t)=1+t^{2}=z_{{\mathfrak b},{\mathfrak g}}(t)=p_{W_{\1}}(t^{2}).$$
\end{theorem}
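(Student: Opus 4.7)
\emph{Proof proposal.} The plan is to verify the first equality $p_{G,B}(t)=1+t^{2}$ by a Bott--Borel--Weil calculation on $G_{\0}$. The remaining equalities are already in hand: $z_{{\mathfrak b},{\mathfrak g}}(t)=1+t^{2}$ is computed in the worked example preceding Theorem~\ref{T:zcompute}, and $p_{W_{\1}}(t^{2})=1+t^{2}$ follows once one identifies $W_{\1}\cong\Z/2$, which is forced because each of $D(2,1,\alpha)$, $G(3)$, $F(4)$ has defect one.

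First I would apply Proposition~\ref{P:Gzeroiso} with $P=B$ and $M=\C$ to obtain
\begin{equation*}
R^{j}\operatorname{ind}_{B}^{G}\C\,|_{G_{\0}}\;\cong\;\bigoplus_{k\ge 0}R^{j}\operatorname{ind}_{B_{0}}^{G_{0}}\Lambda^{k}\bigl((\g_{\1}/\mathfrak{b}_{\1})^{*}\bigr).
\end{equation*}
Since $p_{G,B}(t)$ is a sum of dimensions and dimension is preserved under restriction, this reduces the problem to computing the right-hand side. From Table~\ref{T:BBW-roots} I would read off the (low-dimensional) $T_{0}$-weight decomposition of $\g_{\1}/\mathfrak{b}_{\1}$ for each exceptional $\g$, hence of each $\Lambda^{k}((\g_{\1}/\mathfrak{b}_{\1})^{*})$ into $B_{0}$-weight lines $\C_{\mu}$. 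Classical BBW then says that $R^{i}\operatorname{ind}_{B_{0}}^{G_{0}}\C_{\mu}$ vanishes unless $\mu+\rho$ is regular, and otherwise equals the irreducible $L(w(\mu+\rho)-\rho)$ concentrated in degree $\ell(w)$.

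The $k=0$, $\mu=0$ term contributes the trivial $G_{0}$-module in degree $j=0$. Given the target $1+t^{2}$, and using that all BBW contributions are nonnegative with any nontrivial $L(\lambda)$ of dimension $>1$, the remaining content is to show: (i) exactly one weight $\mu\ne 0$ appearing in some $\Lambda^{k}$ satisfies $\mu+\rho=w^{-1}\rho$ for a Weyl element $w$ with $\ell(w)=2$ (yielding a single trivial $G_{0}$-module in degree $j=2$), and (ii) every other $\mu\ne 0$ has $\mu+\rho$ on a wall. Once these are established the Poincar\'e series follows, and Proposition~\ref{P:selfext}(a) (applicable since $\g\ne\mathfrak{q}(n)$) additionally upgrades this to the statement that $R^{j}\operatorname{ind}_{B}^{G}\C$ is literally trivial as a $G$-module.

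The main obstacle is the case-by-case combinatorial bookkeeping. For $D(2,1,\alpha)$ the Weyl group is $(\Z/2)^{3}$ acting on $\mathfrak{sl}_{2}^{\oplus 3}$, making the check fairly concrete. For $G(3)$ and $F(4)$ one must work inside the larger Weyl groups of $G_{2}\times\mathfrak{sl}_{2}$ and $\mathfrak{so}_{7}\times\mathfrak{sl}_{2}$ and verify $\rho$-regularity and the Weyl length for each sum of odd weights appearing in the exterior powers; the verification remains finite but grows in length. The defect-one structure is what should force almost all candidate $\mu+\rho$ to lie on walls, leaving only the two surviving contributions demanded by $1+t^{2}$.
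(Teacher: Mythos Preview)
Your overall strategy (reduce via Proposition~\ref{P:Gzeroiso}, then apply classical BBW on $G_{\0}$) matches the paper, but there is a genuine gap in step (ii): you treat each $\Lambda^{k}((\g_{\1}/\mathfrak{b}_{\1})^{*})$ as a direct sum of one-dimensional $B_{0}$-modules $\C_{\mu}$ and apply BBW weight-by-weight. This is not legitimate, because $B_{0}$ is a Borel subgroup, not a torus, and the exterior powers are \emph{not} semisimple $B_{0}$-modules. Filtering by weight lines only gives an upper bound on $\dim R^{j}\operatorname{ind}_{B_{0}}^{G_{0}}\Lambda^{k}$; cancellation can and does occur in the long exact sequences.

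Concretely, already for $\g=D(2,1,\alpha)$ your claimed dichotomy in (i)--(ii) fails. The three weights of $\Lambda^{2}((\g_{\1}/\mathfrak{b}_{\1})^{*})$ are $(-2\epsilon,-2\epsilon,0)$, $(0,-2\epsilon,-2\epsilon)$, and $(0,-2\epsilon,0)$. All three are $\rho$-regular: the first two equal $w\cdot 0$ with $\ell(w)=2$, while the third equals $w\cdot 0$ with $\ell(w)=1$. So a na\"{\i}ve weight-by-weight count yields $2t^{2}+t$ from $\Lambda^{2}$, not $t^{2}$. What actually happens is that $\Lambda^{2}$ has a nonsplit $B_{0}$-filtration with a two-dimensional quotient $N$ whose socle has weight $(0,-2\epsilon,-2\epsilon)$ and head $(0,-2\epsilon,0)$; the paper shows $R^{\bullet}\operatorname{ind}_{B_{0}}^{G_{0}}N=0$ by recognizing $N$ (on the third factor) as the two-dimensional $B_{\0,(3)}$-submodule of the adjoint $L(2\epsilon)$ and using the tensor identity plus a long exact sequence. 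Only after this cancellation does one get the single $\C$ in degree $2$ from the remaining weight $(-2\epsilon,-2\epsilon,0)$. For $G(3)$ and $F(4)$ the same phenomenon recurs and the paper handles it by constructing analogous filtrations with two-dimensional subquotients and embeddings into $\Lambda^{2}(L)\boxtimes(-2\epsilon)$ for $L$ an irreducible $G_{\0,(1)}$-module. Your proposal omits this entire mechanism; without it the computation does not close.
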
 

\begin{proof} The last two equalities follow from Theorem~\ref{T:zcompute}. It remains to 
show that $p_{G,B}(t)=1+t^{2}$. 

First consider ${\mathfrak g}=D(2,1,\alpha)$. One has ${\mathfrak g}_{\0}\cong \mathfrak{sl}_{2}\times 
\mathfrak{sl}_{2}\times \mathfrak{sl}_{2}$ with ${\mathfrak g}_{\1}\cong V\boxtimes V \boxtimes V$ where 
$V$ is the 2-dimensional natural representation of $\mathfrak{sl}_{2}$. Let $G_{\0}=G_{\0,(1)}\times G_{\0,(2)}\times G_{\0,(3)}$ 
denote the product of three copies of $SL_{2}$ with Borel subgroup $B_{\0}=B_{\0,(1)}\times B_{\0,(2)}\times B_{\0,(3)}$ 
(corresponding to the negative roots). For a given one-dimensional $B_{\0}$-module, $\mu=(\mu_{1},\mu_{2},\mu_{3})$, 
one has 
\begin{equation}\label{eq:Kunneth}
R^{n}\text{ind}_{B_{\0}}^{G_{\0}}\ \mu=\bigoplus_{n_{1}+n_{2}+n_{3}=n} 
R^{n_{1}}\text{ind}_{B_{\0,(1)}}^{G_{\0,(1)}}\ \mu_{1}\boxtimes R^{n_{2}}\text{ind}_{B_{\0,(2)}}^{G_{\0,(2)}}\ \mu_{2}\boxtimes 
R^{n_{3}}\text{ind}_{B_{\0,(3)}}^{G_{\0,(3)}}\ \mu_{3}
\end{equation}  
by the K{\"u}nneth Theorem. It follows that if any of the components vanish then $R^{\bullet}\text{ind}_{B_{\0}}^{G_{\0}}\ \mu=0$. 

The weights of ${\Lambda}^{1}(({\mathfrak g}_{\1}/{\mathfrak b}_{\1})^{*})$ are 
$\{(-\epsilon,-\epsilon,-\epsilon), (-\epsilon,-\epsilon,\epsilon), (\epsilon,-\epsilon,-\epsilon)\}$. Let $X(T_{\0})$ be the integral weights of $G_{\0}$ and 
$\overline{C}_{\mathbb Z}$ be the closure of the bottom alcove in $X(T_{\0})$. Moreover, let $X(T_{\0})_{+}$ be the set of dominant 
integral weights. See \cite[p. 571-572]{Jan} for precise definitions. 

By the BBW theorem, since all the weights of ${\Lambda}^{1}(({\mathfrak g}_{\1}/{\mathfrak b}_{\1})^{*})$ are in $\overline{C}_{\mathbb Z}-X(T_{\0})_{+}$, it follows that at least one component in the decomposition 
(\ref{eq:Kunneth}) vanishes, so $R^{\bullet}\text{ind}_{B_{\0}}^{G_{\0}} \Lambda^{1}(({\mathfrak g}_{\1}/{\mathfrak b}_{\1})^{*})=0$. 
Similarly, ${\Lambda}^{3}(({\mathfrak g}_{\1}/{\mathfrak b}_{\1})^{*})$ is one-dimensional spanned by a vector of weight 
$\mu=(\epsilon,-3\epsilon,-\epsilon)$. The last component vanishes in (\ref{eq:Kunneth}), thus 
$R^{\bullet}\text{ind}_{B_{\0}}^{G_{\0}} \Lambda^{3}(({\mathfrak g}_{\1}/{\mathfrak b}_{\1})^{*})=0$. Also, note that 
$\Lambda^{0}(({\mathfrak g}_{\1}/{\mathfrak b}_{\1})^{*})\cong {\mathbb C}$, so $R^{j}\text{ind}_{B_{\0}}^{G_{\0}} \Lambda^{0}(({\mathfrak g}_{\1}/{\mathfrak b}_{\1})^{*}) 
=0$ for $j>0$ and is isomorphic to ${\mathbb C}$ for $j=0$. 

We need to analyze $\Lambda^{2}(({\mathfrak g}_{\1}/{\mathfrak b}_{\1})^{*})$. This will entail using two-dimensional $B_{\0}$-modules. Similar 
methods will be also be employed for the $G(3)$ and $F(4)$-cases. A direct computation shows that as a 
$B_{\0}$-module, $\Lambda^{2}(({\mathfrak g}_{\1}/{\mathfrak b}_{\1})^{*})$ has head isomorphic to $(0,-2\epsilon,0)$ and two-dimensional socle 
$(0,-2\epsilon,-2\epsilon)\oplus (-2\epsilon,-2\epsilon,0)$. Therefore, one has a short exact sequence: 
\begin{equation}\label{eq:ss1}
0\rightarrow (-2\epsilon,-2\epsilon,0) \rightarrow \Lambda^{2}(({\mathfrak g}_{\1}/{\mathfrak b}_{\1})^{*})\rightarrow N \rightarrow 0
\end{equation} 
where $N$ is a two-dimensional $B_{\0}$-module isomorphic as $(B_{\0,(1)}\times B_{\0,(2)})\times B_{\0,(3)}$-module  to $(0,-2\epsilon)\boxtimes N^{\prime}$ 
where $N^{\prime}$ is a two-dimensional $B_{\0,(3)}$-module with socle $-2\epsilon$ and head ${\mathbb C}$. 

As a $B_{\0,(3)}$-module, one has 
\begin{equation}\label{eq:ss2}
0\rightarrow N^{\prime}\rightarrow L(2\epsilon)\rightarrow 2\epsilon \rightarrow 0
\end{equation}
where $L(2\epsilon)$ is the three-dimensional adjoint representation for $G_{\0,(3)}$. Now by the tensor identity, 
$R^{j}\text{ind}_{B_{\0,(3)}}^{G_{\0,(3)}} L(2\epsilon)\cong [R^{j}\text{ind}_{B_{\0,(3)}}^{G_{\0,(3)}} {\mathbb C}] \otimes 
L(2\epsilon).$ 
This is zero for $j>0$. Applying the long exact sequence in cohomology to (\ref{eq:ss2}) and the fact that 
$R^{j}\text{ind}_{B_{\0,(3)}}^{G_{\0,(3)}}\ 2\epsilon=0$ for $j>0$ shows that 
$R^{j}\text{ind}_{B_{\0,(3)}}^{G_{\0,(3)}} N^{\prime}=0$ for $j>0$. It remains to look at the remaining part of the 
long exact sequence: 
$$0\rightarrow \text{ind}_{B_{\0,(3)}}^{G_{\0,(3)}} N^{\prime} 
\rightarrow L(2\epsilon) \rightarrow L(2\epsilon) \rightarrow R^{1}\text{ind}_{B_{\0,(3)}}^{G_{\0,(3)}} N^{\prime} \rightarrow 0.$$
The only dominant weight of $N^{\prime}$ is $0$ so $\text{ind}_{B_{\0,(3)}}^{G_{\0,(3)}} N^{\prime}$ is either $0$ or 
${\mathbb C}$. This proves the arrow from $L(2\epsilon)$ to $L(2\epsilon)$ must be an isomorphism, thus 
$R^{\bullet}\text{ind}_{B_{\0,(3)}}^{G_{\0,(3)}} N^{\prime}=0$. 

Now apply the long exact sequence in cohomology (\ref{eq:ss1}) and use the fact that 
$R^{\bullet}\text{ind}_{B_{\0,(3)}}^{G_{\0,(3)}} N^{\prime}=0$. This yields 
$$R^{j}\text{ind}_{B_{\0}}^{G_{\0}} (-2\epsilon,-2\epsilon,0) \cong 
R^{j}\text{ind}_{B_{\0}}^{G_{\0}} \Lambda^{2}(({\mathfrak g}_{\1}/{\mathfrak b}_{\1})^{*})$$ 
for all $j\geq 0$. Applying the K{\"u}nneth theorem and the BBW theorem shows that 
$R^{j}\text{ind}_{B_{\0}}^{G_{\0}} \Lambda^{2}(({\mathfrak g}_{\1}/{\mathfrak b}_{\1})^{*})=0$ for $j\neq 2$ and 
$R^{2}\text{ind}_{B_{\0}}^{G_{\0}} \Lambda^{2}(({\mathfrak g}_{\1}/{\mathfrak b}_{\1})^{*})\cong {\mathbb C}$. 
Consequently, $p_{G,B}(t)=1+t^{2}$. 

For $G(3)$ and $F(4)$ the calculations are much more lengthy and involved to 
show that $p_{G,B}(t)=1+t^{2}$. First, $G_{\0}\cong G_{\0,(1)}\times G_{\0,(2)}$ has 
two components and for any one-dimensional 
$B_{\0}=B_{\0,(1)}\times B_{\0,(2)}$-module $\mu=(\mu_{1},\mu_{2})$ one has 
\begin{equation} 
R^{n}\text{ind}_{B_{\0}}^{G_{\0}}\ \mu=\bigoplus_{n_{1}+n_{2}=n} 
R^{n_{1}}\text{ind}_{B_{\0,(1)}}^{G_{\0,(1)}}\ \mu_{1}\boxtimes R^{n_{2}}\text{ind}_{B_{\0,(2)}}^{G_{\0,(2)}}\ \mu_{2}
\end{equation} 
In these cases the last component $G_{\0,(2)}$ is isomorphic to $SL_{2}$, so to prove the vanishing, the focus will be more on the 
first component $G_{\0,(1)}$ which is $G_{2}$ (resp. $\mathfrak{so}_{7}$) for $G(3)$ (resp. $F(4)$). 

We will outline the ideas to handle $G(3)$. The ideas are similar for $F(4)$ and involve more verifications. In the case of 
$G(3)$, one has $\dim {\mathfrak g}_{\1}/{\mathfrak b}_{\1}=6$. 
\vskip .25cm 
\noindent 
(1) Show that if $k\neq 0,2$ then $R^{\bullet}\text{ind}_{B_{\0}}^{G_{\0}} \Lambda^{k}(({\mathfrak g}_{\1}/{\mathfrak b}_{\1})^{*})=0$. 
\vskip .25cm 
One of the main ideas to analyze $\Lambda^{k}(({\mathfrak g}_{\1}/{\mathfrak b}_{\1})^{*})$ for $k\neq 0,2$ is to find a filtration of $B_{\0}$-modules whose 
subquotients are either one-dimensional or two-dimensional modules $N_{j}$ such that $R^{\bullet}\text{ind}_{B_{\0}}^{G_{\0}} N_{j}=0$. 
For the one-dimensional modules, one shows that the weights are in $\overline{C}_{\mathbb Z}-X(T_{\0})_{+}$. For the two dimensional 
modules, one uses the argument as given in $D(2,1,\alpha)$ so that these modules are submodules of the adjoint modules 
for a parabolic subgroup in $G_{\0,(1)}$ corresponding to an $SL_{2}$. For example, in $G(3)$, the weights 
for $\Lambda^{1}(({\mathfrak g}_{\1}/{\mathfrak b}_{\1})^{*})$ are 
$$\{(-\omega_{1}+\omega_{2},-\epsilon), (2\omega_{1}-\omega_{2},-\epsilon),  (0,-\epsilon), (\omega_{1}-\omega_{2},-\epsilon),
(-2\omega_{1}+\omega_{2},-\epsilon), (-\omega_{1}, -\epsilon) \}.$$
By the BBW theorem, all the weights except for $(0,-\epsilon)$ and $(-2\omega_{1}+\omega_{2},-\epsilon)=-\alpha_{1}$ yield no cohomology. 
One can see the vectors of these weights form a subquotient with the desired properties. 
\vskip .25cm 
\noindent 
(2) For $k=0$, analyze $R^{j}\text{ind}_{B_{\0}}^{G_{\0}} \Lambda^{0}(({\mathfrak g}_{\1}/{\mathfrak b}_{\1})^{*})\cong 
R^{j}\text{ind}_{B_{\0}}^{G_{\0}} {\mathbb C}$. 
\vskip .25cm 
From Kempf's vanishing theorem, $R^{j}\text{ind}_{B_{\0}}^{G_{\0}} {\mathbb C}=0$ for $j>0$, and one has $\text{ind}_{B_{\0}}^{G_{\0}} {\mathbb C}={\mathbb C}$. 
\vskip .25cm 
\noindent 
(3) For $k=2$, show that $R^{j}\text{ind}_{B_{\0}}^{G_{\0}}\  \Lambda^{2}(({\mathfrak g}_{\1}/{\mathfrak b}_{\1})^{*})=0$ for $j\neq 2$ and 
$R^{2}\text{ind}_{B_{\0}}^{G_{\0}}\  \Lambda^{2}(({\mathfrak g}_{\1}/{\mathfrak b}_{\1})^{*})={\mathbb C}$. 
\vskip .25cm 
A technique that is used in the verification of (3) is the existence of an embedding of $\Lambda^{2}(({\mathfrak g}_{\1}/{\mathfrak b}_{\1})^{*})$ 
into $\Lambda^{2}(L)\boxtimes (-2\epsilon)$ where $L$ is an irreducible $G_{\0,(1)}$-module. For example, 
when ${\mathfrak g}=G(3)$, one has 
\begin{equation}\label{eq:ss3}
0\rightarrow \Lambda^{2}(({\mathfrak g}_{\1}/{\mathfrak b}_{\1})^{*})\rightarrow \Lambda^{2}(L(\omega_{1}))\boxtimes (-2\epsilon) \rightarrow M 
\rightarrow 0.
\end{equation} 
From the tensor identity, one has that $R^{j}\text{ind}_{B_{\0}}^{G_{\0}}\  \Lambda^{2}(L(\omega_{1}))\boxtimes (-2\epsilon)=0$ 
for $j\geq 0$. This allows one to dimension shift via the long exact sequence in cohomology to 
concentrate on calculating $R^{j}\text{ind}_{B_{\0}}^{G_{\0}}\  M$. In the case $M$ is a 6-dimensional module. 
This makes the computations tractable to verify (3). A similar short exact sequence to (\ref{eq:ss3}) exists for $F(4)$ via the spin representation $L(\omega_{3})$ for $\mathfrak{so}_{7}$ 
and the same technique can be utilized in this case. 

\end{proof} 

\subsection{}\label{SS:TypeQ} Consider $R^{n}\text{ind}_{B}^{G} {\mathbb C}\cong R^{n}\text{ind}_{B_{\0}}^{G_{\0}}\Lambda^{\bullet}(({\mathfrak g}_{\1}/{\mathfrak b}_{\1})^{*})$ as a $G_{\0}$-module 
for $n\geq 0$. The weights of $\Lambda^{n}(({\mathfrak g}_{\1}/{\mathfrak b}_{\1})^{*})$ are $-\rho(J)$ where $J\subseteq \Phi_{\1}^{+}$ and $|J|=n$. 
Here $\rho(J)=\sum_{\alpha\in J} \alpha$. Set $\rho_{\1}=\frac{1}{2}\sum_{\alpha\in \Phi_{\1}^{+}}\alpha$ and let 
$$w\cdot \lambda=w(\lambda +\rho_{\1})-\rho_{\1}$$ 
for $w\in W_{\1}$ and $\lambda$ in the  ${\mathbb Q}$-span of $\Phi_{\1}$. This will be referred to as the {\em odd} dot action of $W_{\1}$. 

One has 
\begin{equation}\label{eq:subdivide}
\rho_{\1}-\rho(J)=\frac{1}{2}\sum_{\gamma\in J^{\prime}}\gamma -\frac{1}{2}\sum_{\alpha\in J}\alpha
\end{equation} 
where $J^{\prime}=\Phi_{\1}-J$. Now $w\in W_{\1}$ permutes the set of odd roots $\Phi_{\1}$. Under the condition 
that $\Phi_{\1}^{-}=-\Phi_{\1}^{+}$, it follows that  
$w(\rho_{\1}-\rho(J))=\rho_{\1}-\rho(J_{1})$ where $J_{1}\subseteq \Phi_{\1}^{+}$, and consequently 
\begin{equation} 
w\cdot (-\rho(J))=-\rho(J_{1})
\end{equation} 

Let $G_{\0}$ be a reductive algebraic group, $\Delta$ be the simple roots in $\Phi^{+}_{\0}$ and $W_{\0}$ be the Weyl group for 
the corresponding root system, $\Phi_{\0}$, for $G_{\0}$. Let $\rho^{\Phi_{\0}}:=\rho_{\0}=\frac{1}{2}\sum_{\alpha\in \Phi_{\0}^{+}}\alpha$ and 
denote the {\em even} dot action by $w\circ \lambda=w(\lambda+\rho_{\0})-\rho_{\0}$ where $w\in W_{\0}$ and $\lambda\in X(T_{\0})$. 

Table~\ref{T:sumevenodd} provides the relationship between $\rho_{\1}$ and $\rho_{\0}$. Observe that in the case when 
${\mathfrak g}$ is $A(n|n)$ or $\mathfrak{osp}(2n+1|2n)$, $G_{\0}\cong G_{\0,(1)}\times G_{\0,(2)}$. In these cases $\rho_{\0}$ 
is a sum $\rho_{\0,(1)}+\rho_{\0,(2)}$ where $\rho_{\0,(j)}$ is the half sum of positive roots arising from $G_{\0,(j)}$ where $j=1,2$. 

\renewcommand{\arraystretch}{1.2}
\begin{table}[htp] 
\caption{Sums of even and odd roots}
\begin{center}
\begin{tabular}{cc}\label{T:Poincareseries}
${\mathfrak g}$ &   \\ \hline
${\mathfrak q}(n)$ & $\rho_{\1}=\rho_{\0}$   \\
$\mathfrak{psq}(n)$ &  $\rho_{\1}=\rho_{\0}$   \\    
$A(n|n)$ &  $\rho_{\1}=\rho_{\0}=\rho_{\0,(1)}^{A_{n-1}}+\rho_{\0,(2)}^{A_{n-1}}$\\
$\mathfrak{osp}(2n+1|2n)$ &  $\rho_{\1}=\rho_{\0}=\rho_{\0,(1)}^{B_{n}}+\rho_{\0,(2)}^{C_{n}}$  \\
\end{tabular}
\end{center}
\label{T:sumevenodd}
\end{table}

A key idea to calculate $R^{\bullet}\text{ind}_{B}^{G}{\mathbb C}$ entails connecting the even and odd dot actions on weights of $\Lambda^{\bullet}({\mathfrak u}_{\1})$ 
as shown in the next example. 

\begin{example}\label{E:TypeQ} Let ${\mathfrak g}={\mathfrak q}(n)$ or $\mathfrak{psq}(n)$. There exists a $B_{\0}$-isomorphism 
$\Lambda^{\bullet}({\mathfrak u}_{\0}^{-})\cong \Lambda^{\bullet}(({\mathfrak g}_{\1}/{\mathfrak b}_{\1})^{*})$. 
Furthermore, $\rho_{\1}=\rho_{\0}$ and the even and odd dot actions coincide. 

One can now directly apply \cite[II 6.18, Proposition]{Jan} to conclude that 
$R^{n}\text{ind}_{B_{\0}}^{G_{\0}}\Lambda^{\bullet}(({\mathfrak g}_{\1}/{\mathfrak b}_{\1})^{*})\cong {\mathbb C}^{\oplus t_{n}}$ where 
$t_{n}=|\{w\in \Sigma_{n}:\ l(w)=n\}|$. The contributions in this cohomology group are given by weights in $\Lambda^{n}(({\mathfrak g}_{\1}/{\mathfrak b}_{\1})^{*})$, 
so one can apply Proposition~\ref{P:selfext}(b) to conclude that $p_{G,B}(t)=p_{W_{\1}}(t)$.  This result generalizes the ${\mathfrak q}(2)$ 
example computed by Brundan \cite[Lemma 4.4]{Bru} for all ${\mathfrak q}(n)$, $n\geq 1$.  
\end{example} 

\subsection{Combinatorics with odd roots} We will start by focusing on the cases when 
${\mathfrak g}$ is of type $A(n|n)$ or $\mathfrak{osp}(2n+1|2n)$.  

In this setting $G_{\0}$ is a product of two reductive algebraic groups which is unlike the case for type $Q$. The dot action of 
the group $W_{\1}$ on $\Phi_{\1}$ is more complicated in this setting, yet one still has 
a beautiful connection between $w\cdot 0$ with natural subsets of roots in $\Phi_{\1}^{+}$. 
We will consider the following set of even simple roots for ${\mathfrak g}_{\0}$ given in Table~\ref{T:evenroots} (cf. \cite[ 12.1]{Hum1}).

\begin{center}
\begin{table}[htp] 
\caption{Even Simple Roots}
\begin{tabular}{cccc} 
${\mathfrak g}$ &  $\bar{{\Delta}}_{\0,(1)}$ & $\bar{{\Delta}}_{\0,(2)}$              \\ \hline
$A(n|n)$ & $\{\epsilon_{1}-\epsilon_{2},\dots, \epsilon_{n-1}-\epsilon_{n}\}$ &    $\{\delta_{1}-\delta_{2},\dots, \delta_{n-1}-\delta_{n}\}$    \\   
$\mathfrak{osp}(2n+1|2n)$ & $\{\epsilon_{1}-\epsilon_{2},\dots, \epsilon_{n-1}-\epsilon_{n},\epsilon_{n}\}$ &    $\{\delta_{1}-\delta_{2},\dots, \delta_{n-1}-\delta_{n},2\delta_{n}\}$    \\                                                                   
\end{tabular}
\label{T:evenroots} 
\end{table}
\end{center}

Set $I=\{1,2,\dots,n-1\}$ for $A(n|n)$ (resp. $I=\{1,2,\dots,n\}$ for $\mathfrak{osp}(2n+1|2n)$). 
Let $s_{j,\0,(1)}$ (resp. $s_{j,\0,(2)}$) be the reflection corresponding to the $j$th root in 
${{\Delta}}_{\0,(1)}$ (resp. ${{\Delta}}_{\0,(2)}$).  For $j\in I$, set 
$$s_{j}:=s_{j,\0,(1)}s_{j,\0,(2)}.$$
Then $s_{j}$ is a simple reflection in $W_{\1}$ and $W_{\1}$ is generated by $\{s_{j}:\ j\in I\}$. 

\begin{example}\label{ex:reflections} Let ${\mathfrak g}=A(n|n)$. Observe that 
$s_{j}(\epsilon_{j}-\delta_{j+1})=\delta_{j+1}-\epsilon_{j}\in \Phi^{-}_{\1}$ and 
$s_{j}(\delta_{j}-\epsilon_{j+1})=\epsilon_{j+1}-\delta_{j}\in \Phi^{-}_{\1}$. 
Furthermore, 
$$s_{j}(\Phi_{\1}^{+}-\{\epsilon_{j}-\delta_{j+1},\delta_{j}-\epsilon_{j+1}\})\subset \Phi_{\1}^{+}.$$ 
That is, the only roots in $\Phi_{\1}^{+}$ that are sent to $\Phi_{\1}^{-}$ are 
$\epsilon_{j}-\delta_{j+1}$ and $\delta_{j}-\epsilon_{j+1}$. 
\end{example} 

From direct computations, one can show that Example~\ref{ex:reflections} extends to the other 
algebras listed in Table~\ref{T:evenroots}. There are two sets of roots in $\Phi_{\1}^{+}$, $\bar{\Delta}_{\1,(1)}=\{\beta_{j}: j\in I\}$ and 
$\bar{\Delta}_{\1,(2)}=\{\gamma_{j}: j\in I\}$ with the property that
\begin{itemize} 
\item $s_{j}(\{\beta_{j},\gamma_{j}\})\subset \Phi_{\1}^{-}$ with $s_{j}(\beta_{j})=-\beta_{j}$ and $s_{j}(\gamma_{j})=-\gamma_{j}$ 
\item $s_{j}(\Phi_{\1}^{+}-\{\beta_{j},\gamma_{j}\})\subset \Phi_{\1}^{+}$ 
\end{itemize} 
The following table gives this correspondence. 
\vskip .5cm 
\begin{center}
\begin{tabular}{cccc}
${\mathfrak g}$ &  $\bar{{\Delta}}_{\1,(1)}$ & $\bar{{\Delta}}_{\1,(2)}$              \\ \hline
$A(n|n)$ & $\{\epsilon_{1}-\delta_{2},\dots, \epsilon_{n-1}-\delta_{n}\}$ &    $\{\delta_{1}-\epsilon_{2},\dots, \delta_{n-1}-\epsilon_{n}\}$    \\   
$\mathfrak{osp}(2n+1|2n)$ & $\{\epsilon_{1}-\delta_{2},\dots, \epsilon_{n-1}-\delta_{n},\delta_{n}\}$ &    
$\{\delta_{1}-\epsilon_{2},\dots, \delta_{n-1}-\epsilon_{n},\epsilon_{n}+\delta_{n}\}$    \\                                                                   
\end{tabular}
\end{center}
\vskip .5cm 
For $w\in W_{\1}$ set 
\begin{equation} \label{E:Phiw}
\Phi(w) = -(w\Phi_{\1}^+\cap\Phi_{\1}^-) = w\Phi_{\1}^-\cap\Phi_{\1}^+\subset \Phi_{\1}^+.
\end{equation}

The following results establish some basic facts about $\Phi(w)$.

\begin{proposition}\label{P:combinatorics} Let $w\in W_{\1}$. 
\begin{itemize}
\item[(a)] \label{I:card} $|\Phi(w)|=2\cdot l(w)$. 
\item[(b)] \label{I:wdot0} $w\cdot 0=-\rho(\Phi(w))$.
\item[(c)] \label{I:Phiw} If $w=s_{j_1}\dots s_{j_t}$ is a reduced expression, then
$$\Phi(w)=\{\beta_{j_{1}},s_{j_{1}}\beta_{j_{2}},s_{j_{1}}s_{j_{2}}\beta_{j_{3}},\dots,s_{j_{1}}s_{j_{2}}\dots s_{j_{t-1}}\beta_{j_{t}}\}\cup 
\{\gamma_{j_{1}},s_{j_{1}}\gamma_{j_{2}},s_{j_{1}}s_{j_{2}}\gamma_{j_{3}},\dots,s_{j_{1}}s_{j_{2}}\dots s_{j_{t-1}}\gamma_{j_{t}}\}.
$$
\item[(d)] If $w\cdot 0 = - \rho(J)$ for some 
$J\subset\Phi^+_{\1}$ then $J=\Phi(w)$.
\end{itemize}
\end{proposition}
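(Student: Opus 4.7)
The plan is to prove (a), (b), (c) by a single induction on $\ell(w)$, and then derive (d) by another induction that uses the combinatorial description of (c).

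Part (b) is formal: partition $\Phi_{\1}^{+}=A\sqcup B$ where $A=\{\alpha\in\Phi_{\1}^{+}:w\alpha>0\}$ and $B=\{\alpha\in\Phi_{\1}^{+}:w\alpha<0\}$. Since $w$ bijects $\Phi_{\1}$ and $\Phi_{\1}^{-}=-\Phi_{\1}^{+}$, we have $\Phi_{\1}^{+}=wA\sqcup(-wB)$ with $-wB=\Phi(w)$. Substituting into $2w\rho_{\1}=\sum_{\alpha\in A}w\alpha+\sum_{\alpha\in B}w\alpha$ and regrouping yields $w\rho_{\1}=\rho_{\1}-\rho(\Phi(w))$, so $w\cdot 0=-\rho(\Phi(w))$.

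For (a) and (c), I induct on $t=\ell(w)$, the case $t=0$ being trivial. In the inductive step, factor $w=s_{j_1}w'$ with $\ell(w')=t-1$. Using the defining properties of $s_{j_1}$ (it bijectively swaps $\{\beta_{j_1},\gamma_{j_1}\}$ with $\{-\beta_{j_1},-\gamma_{j_1}\}$ and permutes $\Phi_{\1}^{+}\setminus\{\beta_{j_1},\gamma_{j_1}\}$), a direct case split on whether $\alpha\in\{\beta_{j_1},\gamma_{j_1}\}$ gives
\begin{equation*}
\Phi(w)=\bigl(\{\beta_{j_1},\gamma_{j_1}\}\setminus\Phi(w')\bigr)\,\sqcup\,s_{j_1}\bigl(\Phi(w')\setminus\{\beta_{j_1},\gamma_{j_1}\}\bigr),
\end{equation*}
so $|\Phi(w)|=|\Phi(w')|+2-2c$ where $c=|\{\beta_{j_1},\gamma_{j_1}\}\cap\Phi(w')|$. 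The main obstacle is to show $c=0$ whenever $\ell(s_{j_1}w')=\ell(w')+1$; this is the odd analog of the Coxeter exchange condition. I would establish it case-by-case from Table~\ref{T:evenroots}, exploiting the realization of $W_{\1}$ as a diagonal subgroup of the product of Weyl groups $W_{\0,(1)}\times W_{\0,(2)}$ so that $\Phi(w)$ decomposes into a pair of classical inversion sets to which the usual exchange condition applies. With $c=0$ in hand, the formula collapses to $\Phi(w)=\{\beta_{j_1},\gamma_{j_1}\}\sqcup s_{j_1}\Phi(w')$, giving $|\Phi(w)|=2t$; iterating the recursion then recovers the explicit description of (c).

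For (d), I induct again on $\ell(w)$. The base case $w=e$ reduces to the implication $\rho(J)=0\Rightarrow J=\emptyset$, which holds because $\Phi_{\1}^{+}$ lies strictly on one side of a regular hyperplane. For the inductive step, split $J=J_0\sqcup J_1$ with $J_0=J\cap\{\beta_{j_1},\gamma_{j_1}\}$ and use the decomposition $\Phi(w)=\{\beta_{j_1},\gamma_{j_1}\}\sqcup s_{j_1}\Phi(w')$ from (c). Rearranging $\rho(J)=\rho(\Phi(w))$ and applying the linear map $s_{j_1}$ produces
\begin{equation*}
\rho\bigl(s_{j_1}(J_1)\sqcup(\{\beta_{j_1},\gamma_{j_1}\}\setminus J_0')\bigr)=\rho(\Phi(w')),
\end{equation*}
where $J_0'\subseteq\{\beta_{j_1},\gamma_{j_1}\}$ records the $s_{j_1}$-image of $J_0$. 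The inductive hypothesis applied to $w'$ forces the bracketed set to equal $\Phi(w')$; since $\beta_{j_1},\gamma_{j_1}\notin\Phi(w')$ (established during the proof of (a), (c)), the only admissible choice is $J_0'=\{\beta_{j_1},\gamma_{j_1}\}$, hence $J_0=\{\beta_{j_1},\gamma_{j_1}\}$ and $s_{j_1}(J_1)=\Phi(w')$. This yields $J=\{\beta_{j_1},\gamma_{j_1}\}\sqcup s_{j_1}\Phi(w')=\Phi(w)$.
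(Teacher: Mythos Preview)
Your proof is correct and follows essentially the same strategy as the paper: induction on $\ell(w)$ via the recursion $\Phi(w)=\{\beta_{j_1},\gamma_{j_1}\}\sqcup s_{j_1}\Phi(w')$, with (d) handled by the same case analysis on $J\cap\{\beta_{j_1},\gamma_{j_1}\}$. The only cosmetic differences are that you prove (b) directly (the standard inversion-set computation) rather than inductively, and you make explicit the justification of $\beta_{j_1},\gamma_{j_1}\notin\Phi(w')$ via the diagonal embedding into $W_{\0,(1)}\times W_{\0,(2)}$, which the paper compresses to the phrase ``due to the minimality of the expression.''
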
 

\begin{proof} (a) (b) and (c): One proves these statements using induction on $l(w)$. 
When $w=\text{id}$ (i.e., $l(w)=0$), these statements are clear. Now suppose that 
$w\in W_{\1}$ and $w=s_{j}w^{\prime}$ where $l(w)=l(w^{\prime})+1$. One has 
$\beta_{j}\notin \Phi(w^{\prime})$ and $\gamma_{j}\notin \Phi(w^{\prime})$ due to the minimality of 
the expression $w=s_{j}w^{\prime}$. Since $s_{j}$ sends all roots in $\Phi_{\1}^{+}$ other than $\beta_{j}$ and $\gamma_{j}$ to $\Phi_{\1}^{+}$, 
one can express 
\begin{equation}\label{eq:union}
\Phi(w)=s_{j}(\Phi(w^{\prime}))\cup \{\beta_{j},\gamma_{j}\}.
\end{equation} 
This is a disjoint union of sets. This proves (a) and (c). 

For (b), one observes that 
$$w\cdot 0=s_{j}\cdot(w^{\prime}\cdot 0)=s_{j}\cdot(-\rho(\Phi(w^{\prime})))=-s_{j}\rho(\Phi(w^{\prime}))+s_{j}\cdot 0=
-s_{j}\rho(\Phi(w^{\prime}))-\beta_{j}-\gamma_{j}=-\rho(\Phi(w))$$ 
by using (\ref{eq:union}).

(d) We adapt the line of reasoning given in \cite[Lemma 3.1.2(b)]{UGAVIGRE}. Statement (d) will be 
proved by induction on $l(w)$.  If $w=\text{id}$ or equivalently $l(w)=0$ then 
$w\cdot 0=0$, so $J=\varnothing=\Phi(\text{id})$. 

Let $w\in W_{\1}$ with $l(w)>0$. One has 
write $w=s_j w^{\prime}$ with $l(w)=l(w^{\prime})+1$. 
We have $\beta_{j},\gamma_{j}\in \Phi(w)$ and these elements are not in 
$\Phi(w^{\prime})$. 

Let $w\cdot 0 = -\rho(J)$ where $J=\{\sigma_1,\sigma_{2},\dots,\sigma_m\}\in \Phi_{\1}^{+}$.  
Then
$$w'\cdot 0 = s_j \cdot (w\cdot 0)=s_j(w\cdot 0) + s_j \rho - \rho 
= -(s_j \sigma_1+\dots +s_j \sigma_m + \beta_{j}+\gamma_{j}).
$$
There are two cases.
\vskip .25cm 

\noindent{\bf Case 1}: $\sigma_i\neq \beta_{j}$ or $\gamma_{j}$ for all $i$. Without loss of generality we may 
assume that $i=m$ when there is equality. In this case, each of the three sets (that we
will denote by $J^{\prime}$) 
\begin{itemize} 
\item $\{s_j\sigma_1,\dots, s_j \sigma_m,\beta_{j},\gamma_{j}\}$ 
\item $\{s_j\sigma_1,\dots, s_j \sigma_{m-1},\beta_{j}\}$ 
\item $\{s_j\sigma_1,\dots, s_j \sigma_{m-1},\gamma_{j}\}$
\end{itemize}  
yields distinct elements in $\Phi_{\1}^{+}$ whose sum equals $-w^{\prime}\cdot 0$.  Now by induction, $-\rho(J')=\Phi(w')$, 
This is a contradiction because $\beta_{j} \notin\Phi(w')$ and $\gamma_{j} \notin\Phi(w')$.  
\vskip .25cm

\noindent{\bf Case 2}: $\sigma_i=\beta_{j}$ and $\sigma_{k}=\gamma_{j}$ for some $i,k$. We may assume that 
$i=m$ and $k=m-1$. so $w'\cdot 0 = -(s_j\sigma_1+\dots +s_j\sigma_{m-2})$. 
By induction, $\Phi(w')=\{s_j\sigma_1,\dots,s_j\sigma_{m-2}\}$. Consequently, 
$\Phi(w)=\Phi(w')\cup\{\beta_{j},\gamma_{j}\} = \{\sigma_1,\dots,\sigma_m\}$. 
\end{proof} 

\subsection{}\label{SS:TypeABC} In this section we compute $p_{G,B}(t)$ for the algebras listed in Table~\ref{T:evenroots}. 
This will be accomplished in a series of steps. Recall that $G_{\0}=G_{\0,(1)}\times G_{\0,(2)}$. It will be convenient to 
view a weight of $G_{\0,(1)}\times G_{\0,(2)}$ as a pair $(\sigma_{1},\sigma_{2})$ that is expressed as $\sigma_{1}+\sigma_{2}$ when 
considered as a weight of $\Lambda^{\bullet}(({\mathfrak g}_{\1}/{\mathfrak b}_{\1})^{*})$. 

When $\rho_{\bar{0}}=\rho_{\bar{1}}$, the even dot action is compatible with the odd action for $w\in W_{\bar{1}}$. That is, 
$$(w,w)\circ \mu=w\cdot \mu$$ 
for $w\in W_{\1}$ and for any weight $\mu$ (i.e., in the span of the $\epsilon$'s and $\delta$'s). This observation 
about the compatibility of the actions is central to making the computations in the paper.
\vskip .5cm 
\noindent 
(1) {\em If $\sigma=(\sigma_{1},\sigma_{2})$ is a weight of $\Lambda^{\bullet}(({\mathfrak g}_{\1}/{\mathfrak b}_{\1})^{*})$ and 
$R^{n}\operatorname{ind}_{B_{\0}}^{G_{\0}} (\sigma_{1},\sigma_{2}) \neq 0$ then $\sigma=(w_{1}\circ 0,w_{2}\circ 0)$ where $w_{1},w_{2}\in W_{\1}$}. Here 
$\circ$ denotes the dot action of the Weyl group of $G_{\0,(1)}\times G_{\0,(2)}$. 
\vskip .5cm 
\noindent 
First consider ${\mathfrak g}=A(n|n)$. Then $W_{\1}$ can be identified as the diagonal embedding of $\Delta:\Sigma_{m}\hookrightarrow \Sigma_{m}\times \Sigma_{m}$ 
with $w\in W_{\1}$ represented as $(w,w)$.  There exists $\lambda_{j}\in X(T_{\0,(j)})_{+}$ and $w_{j}\in \Sigma_{m}$ for $j=1,2$ such 
that $\sigma_{1}=w_{1}\circ \lambda_{1}$ and $\sigma_{2}=w_{2}\circ \lambda_{2}$. We have 
\begin{equation}\label{eq:generic}
w_{\1}\circ \lambda_{1}+w_{2}\circ \lambda_{2}=-\rho(J)
\end{equation}  
for some $J\subseteq \Phi_{\1}^{+}$. Since $\rho_{\0}=\rho_{\0,(1)}+\rho_{\0,(2)}=\rho_{\1}$, applying $(w_{1}^{-1},w_{1}^{-1})\in W_{\1}$ to this equation yields 
\begin{equation}\label{eq:generic2}
\lambda_{1}+(w_{1}^{-1}w_{2})\circ \lambda_{2}=w_1^{-1}\cdot (-\rho(J))=-\rho(J_{1})
\end{equation}
for some $J_{1}\subseteq \Phi_{\1}^{+}$. 

We claim that the dominance condition on $\lambda_{1}$ forces $\lambda_{1}=0$. One has 
\begin{equation}\label{eq:-rho}
-\rho(J_{1})=\sum_{i>j}m_{i,j}(\epsilon_{i}-\delta_{j})+\sum_{i>j}n_{i,j}(\delta_{i}-\epsilon_{j}).
\end{equation}
with $1\leq i,j \leq n$ and $m_{i,j}, n_{i,j}\geq 0$. In (\ref{eq:generic2}), the term $(w_{1}^{-1}w_{2})\circ \lambda_{2}$ only involves $\delta_{j}$'s. 
The term involving $\epsilon_{1}$ in (\ref{eq:-rho}) is less than or equal to zero, whereas the term involving $\epsilon_{n}$ is 
greater than or equal to zero. Since $\lambda_{1}$ is dominant it follows that $\lambda_{1}=0$. 
Therefore, $w_{1}\circ 0+ w_{2}\circ \lambda_{2}=-\rho(J)$. Apply $w_{2}^{-1}$ to both sides and repeat the argument above to get that $\lambda_{2}=0$. 

Next consider ${\mathfrak g}=\mathfrak{osp}(2n+1|2n)$. Then $W_{\1}=\Delta(\Sigma_{m}\ltimes ({\mathbb Z}_{2})^{m})$. 
Given (\ref{eq:generic}), one can use the same line of reasoning as in the preceding paragraph with a few modifications. 
One needs to add the additional term to (\ref{eq:-rho}): $\sum_{i,j}q_{i,j}(\epsilon_{i}+\delta_{j})+\sum_{j}r_{j}\delta_{j}$ with 
$q_{i,j},r_{j}\leq 0$. The dominance condition for $\mathfrak{so}(2n+1)$ (resp. $\mathfrak{sp}(2n)$) entails that the coefficient 
involving $\epsilon_{n}$ (resp. $\delta_{n}$) is greater than or equal to zero. This allows us to show that $\lambda_{1}=0$ and $\lambda_{2}=0$. 

\vskip .5cm

\noindent 
(2) {\em If $R^{n}\operatorname{ind}_{B_{\0}}^{G_{\0}} (w_{1}\circ 0,w_{2}\circ 0) \neq 0$ for $w_{1},w_{2}\in W_{\1}$ then $w_{1}=w_{2}$}. 
\vskip .5cm 
Suppose that $w_{1}\circ 0+w_{2}\circ 0=-\rho(J)$ for some $J\subseteq \Phi_{\1}^{+}$. Then $(w_{1}^{-1}w_{2})\circ 0=-\rho(J_{1})$ for some 
$J_{1}\subseteq \Phi_{\1}^{+}$. Set $w=w_{1}^{-1}w_{2}$, and note that $-\rho(J_{1})$ consists of a negative sum of roots for 
$G_{\0,(2)}$ (i.e, roots involving $\delta$'s). 

Let $l(w)>0$ and $w=s_{i,\0,(2)}w^{\prime}$ be a reduced expression. 
Then 
$$s_{i,\0,(1)}\circ 0+w^{\prime}\circ 0=s_{i}\cdot (-\rho(J_{1}))=s_{i}(-\rho(J_{1}))+s_{i}\cdot 0$$ 
Now $s_{i,\0,(1)}\cdot 0=-(\epsilon_{i}-\epsilon_{i+1})$ and 
$s_{i}\cdot 0=-(\epsilon_{i}-\epsilon_{i+1})-(\delta_{i}-\delta_{i+1})$. Set $\bar{\alpha}=\delta_{i}-\delta_{i+1}$. 
It follows that 
\begin{eqnarray*} 
w^{\prime}\cdot 0 &=& s_{i}(-\rho(J_{1}))-\bar{\alpha} \\
&=&-\rho(J_{1})+\langle -\rho(J_{1}), \bar{\alpha}^{\vee} \rangle \bar{\alpha}-\bar{\alpha}\\
&=& w\cdot 0 +[\langle -\rho(J_{1}), \bar{\alpha}^{\vee} \rangle-1] \bar{\alpha}.
\end{eqnarray*} 
Therefore, 
\begin{equation}\label{eq:comparisionbar-alpha}
w^{\prime}\cdot 0 - w\cdot 0= -[\langle -\rho(J_{1}), \bar{\alpha}^{\vee} \rangle+1] \bar{\alpha}.
\end{equation}
From the explicit descriptions of the negative roots summing to $w^{\prime}\cdot 0$ and $w\cdot 0$, one 
can conclude that $w^{\prime}\cdot 0 - w\cdot 0=-(w^{\prime})^{-1}(\bar{\alpha})$. The equation (\ref{eq:comparisionbar-alpha}) 
shows that $(w^{\prime})^{-1}(\bar{\alpha})=\bar{\alpha}$ and $\langle -\rho(J_{1}), \bar{\alpha}^{\vee} \rangle=0$. 
Therefore, 
$$0=\langle -\rho(J_{1}), \bar{\alpha}^{\vee} \rangle=0=\langle w\cdot 0, \bar{\alpha}^{\vee} \rangle=
\langle w\rho_{\0}^{(2)}-\rho_{\0}^{(2)}, \bar{\alpha}^{\vee} \rangle.$$ 
Consequently, $\langle w\rho_{\0}^{(2)}, \bar{\alpha}^{\vee} \rangle=1$. On the other hand,  
$$\langle w\rho_{\0}^{(2)}, \bar{\alpha}^{\vee} \rangle=\langle s_{i,\0,(2)}w^{\prime}\rho_{\0}^{(2)}, \bar{\alpha}^{\vee} \rangle=
\langle w^{\prime}\rho_{\0}^{(2)}, -\bar{\alpha}^{\vee} \rangle=\langle \rho_{\0}^{(2)}, -\bar{\alpha}^{\vee} \rangle=-1.$$ 
This is a contradiction, so $l(w)=0$ and $w_{1}=w_{2}$. 
\vskip .5cm 
\noindent 
(3) {\em $\dim \Lambda^{\bullet}(({\mathfrak g}_{\1}/{\mathfrak b}_{\1})^{*})_{(w\circ 0,w\circ0)}=1$ for all $w\in W_{\1}$.}
\vskip .5cm
\noindent 
The statement (3) follows from Proposition~\ref{P:combinatorics}.  
\vskip .5cm 
Let $n\geq 0$. According to the K{\"u}nneth formula 
\begin{equation} 
R^{n}\text{ind}_{B_{\0}}^{G_{\0}}\ (w\circ 0, w\circ 0)=\bigoplus_{n_{1}+n_{2}=n} 
R^{n_{1}}\text{ind}_{B_{\0,(1)}}^{G_{\0,(1)}}\ w\circ 0\ \boxtimes R^{n_{2}}\text{ind}_{B_{\0,(2)}}^{G_{\0,(2)}}\ w\circ 0.
\end{equation}
This shows that 
$$
R^{n}\text{ind}_{B_{\0}}^{G_{\0}}\ (w\cdot 0, w\cdot 0)\cong 
\begin{cases} {\mathbb C} & \text{$n=2l(w)$} \\
0 & \text{otherwise}.
\end{cases} 
$$ 
From (1), (2) and (3), one can conclude that 
$R^{n}\text{ind}_{B_{\0}}^{G_{\0}} \Lambda^{\bullet}(({\mathfrak g}_{\1}/{\mathfrak b}_{\1})^{*})\cong {\mathbb C}^{\oplus t_{n}}$ where 
$t_{n}=|\{w\in W_{\1}:\ \frac{n}{2}=l(w)\}|$ for $n$ even and zero when $n$ is odd. 
Consequently, by Proposition~\ref{P:selfext}(a), one has that 
$p_{G,B}(t)=p_{W_{\1}}(t^{2})$. 

\subsection{Computing Poincar\'e Series via Spectral Sequences} 

Let $P$ be a parabolic subgroup such that $B\subseteq P \subseteq G$. The following result enables one to compute 
$p_{G,B}(t)$ from $p_{G,P}(t)$ and $p_{P,B}(t)$. 

\begin{proposition} \label{P:Poincarespectral} Let $P$ be a parabolic subgroup such that $B\subseteq P \subseteq G$. Suppose that 
\begin{itemize} 
\item[(a)] $R^{2\bullet}\operatorname{ind}_{B}^{P} {\mathbb C}\cong {\mathbb C}^{\oplus s}$ and $R^{2\bullet+1}\operatorname{ind}_{B}^{P}{\mathbb C}=0$; 
\item[(b)] $R^{2\bullet+1}\operatorname{ind}_{P}^{G}{\mathbb C}=0$. 
\end{itemize} 
Then $p_{G,B}(t)=p_{G,P}(t)\cdot p_{P,B}(t)$. 
\end{proposition}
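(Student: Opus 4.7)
The plan is to invoke the Grothendieck spectral sequence coming from the transitivity of induction, and then show that the hypotheses force a parity pattern that causes it to collapse.

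Since $B \subseteq P \subseteq G$ and induction is transitive, $\operatorname{ind}_{B}^{G}(-) \cong \operatorname{ind}_{P}^{G} \circ \operatorname{ind}_{B}^{P}(-)$. Because $\operatorname{ind}_{B}^{P}$ sends injective $B$-modules to injective (in particular, $\operatorname{ind}_{P}^{G}$-acyclic) $P$-modules in the rational supergroup scheme setting, one has a Grothendieck spectral sequence
\begin{equation*}
E_{2}^{i,j} = R^{i}\operatorname{ind}_{P}^{G}\bigl(R^{j}\operatorname{ind}_{B}^{P} \mathbb{C}\bigr) \Longrightarrow R^{i+j}\operatorname{ind}_{B}^{G}\mathbb{C}.
\end{equation*}
By hypothesis (a), $R^{j}\operatorname{ind}_{B}^{P}\mathbb{C}$ vanishes when $j$ is odd and is isomorphic (as a $P$-module) to a trivial module $\mathbb{C}^{\oplus s_{k}}$ when $j=2k$, where $s_{k} := \dim R^{2k}\operatorname{ind}_{B}^{P}\mathbb{C}$. (Hypothesis (a) is interpreted at the level of $P$-modules; in the classical Lie superalgebra setting this follows by the same reasoning as in Proposition~\ref{P:selfext}.) Hence
\begin{equation*}
E_{2}^{i,j} \cong
\begin{cases}
\bigl(R^{i}\operatorname{ind}_{P}^{G}\mathbb{C}\bigr)^{\oplus s_{j/2}}, & j \text{ even},\\
0, & j \text{ odd}.
\end{cases}
\end{equation*}
By hypothesis (b), $R^{i}\operatorname{ind}_{P}^{G}\mathbb{C}=0$ whenever $i$ is odd. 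Thus $E_{2}^{i,j}=0$ unless both $i$ and $j$ are even.

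Now consider the differentials $d_{r}\colon E_{r}^{i,j}\to E_{r}^{i+r,j-r+1}$. If the source is nonzero then $i$ and $j$ are both even, and if the target is to be nonzero then $i+r$ and $j-r+1$ must both be even, which forces $r$ to be simultaneously even and odd. Hence every $d_{r}$ with $r\geq 2$ vanishes, and the spectral sequence collapses at $E_{2}$. Therefore $R^{n}\operatorname{ind}_{B}^{G}\mathbb{C}$ has a filtration whose associated graded is $\bigoplus_{i+j=n} E_{2}^{i,j}$.

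Taking dimensions and assembling the Poincar\'e series,
\begin{equation*}
p_{G,B}(t) = \sum_{n}\Bigl(\sum_{i+j=n}\dim E_{2}^{i,j}\Bigr)t^{n} = \sum_{i,j\text{ even}} s_{j/2}\,\dim R^{i}\operatorname{ind}_{P}^{G}\mathbb{C}\cdot t^{i+j}.
\end{equation*}
The right-hand side factors as a product of series in $t$, namely as $\bigl(\sum_{i\text{ even}}\dim R^{i}\operatorname{ind}_{P}^{G}\mathbb{C}\cdot t^{i}\bigr)\bigl(\sum_{j\text{ even}} s_{j/2}\, t^{j}\bigr) = p_{G,P}(t)\cdot p_{P,B}(t)$, where the restriction to even indices is automatic by (a) and (b). This yields the desired identity.

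The main obstacle to making this rigorous is justifying the existence of the Grothendieck spectral sequence in the affine supergroup scheme setting together with the $P$-module (not merely $P_{0}$-module) identification of $R^{2\bullet}\operatorname{ind}_{B}^{P}\mathbb{C}$; both facts are available from the general framework of \cite{BruKl} and the self-extension analysis of Proposition~\ref{P:selfext}. Once those are in place, the parity collapse argument is entirely formal.
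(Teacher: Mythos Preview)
Your proposal is correct and follows essentially the same route as the paper: both use the Grothendieck spectral sequence $E_{2}^{i,j}=R^{i}\operatorname{ind}_{P}^{G}(R^{j}\operatorname{ind}_{B}^{P}\mathbb{C})\Rightarrow R^{i+j}\operatorname{ind}_{B}^{G}\mathbb{C}$, use (a) (together with the tensor identity/triviality of $R^{j}\operatorname{ind}_{B}^{P}\mathbb{C}$) and (b) to force the $E_{2}$-page to be concentrated in bidegrees with $i,j$ both even, and then observe that the bidegree $(r,1-r)$ of the differentials makes them all zero. Your parity argument is spelled out a bit more carefully than the paper's, and your closing remark about needing the $P$-module (not just $P_{\0}$-module) triviality is exactly what the paper implicitly uses when it invokes the tensor identity.
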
 

\begin{proof} There exists a first quadrant spectral sequence
$$E_{2}^{i,j}=R^{i}\text{ind}_{P}^{G}\ R^{j}\text{ind}_{B}^{P} {\mathbb C} \Rightarrow R^{i+j}\text{ind}_{B}^{G} {\mathbb C}.$$
From (a), since the $P$-modules, $R^{\bullet}\operatorname{ind}_{B}^{P} {\mathbb C}$ are either $0$ or a direct sum of trivial modules, one 
can regard these modules as $G$-modules (i.e., the $P$-module structure lifts to $G$).  
Therefore, by the tensor identity,  the $E_{2}$-page can be expressed as a tensor product
$$E_{2}^{i,j}=R^{i}\text{ind}_{P}^{G} {\mathbb C}\otimes  R^{j}\text{ind}_{B}^{P} {\mathbb C}.$$ 
According to (b), $E_{2}^{i,j}=0$ has non-zero terms only if $i$ and $j$ are both even. The differentials in the 
spectral sequence have bidegree $(r,1-r)$. Therefore, the spectral sequence must collapse and yields 
$R^{\bullet}\text{ind}_{B}^{G} {\mathbb C}\cong R^{\bullet}\text{ind}_{P}^{G} {\mathbb C}\otimes  R^{\bullet}\text{ind}_{B}^{P} {\mathbb C}$. 
This proves the statement of the proposition.  
\end{proof}

\subsection{${\mathfrak g}=\mathfrak{osp}(2n|2n)$ for $n\geq 1$}\label{S:ospeven}

We begin by comparing the even and odd roots for ${\mathfrak g}=\mathfrak{osp}(2n|2n)$ through the 
information below. 

\begin{center}
\begin{table}[htp]
\caption{Even Simple Roots}
\begin{tabular}{cccc} 
${\mathfrak g}$ &  $\bar{{\Delta}}_{\0,(1)}$ & $\bar{{\Delta}}_{\0,(2)}$              \\ \hline
$\mathfrak{osp}(2n|2n)$    &  $\{\epsilon_{1}-\epsilon_{2},\dots, \epsilon_{n-1}-\epsilon_{n},\epsilon_{n-1}+\epsilon_{n}\}$ &    $\{\delta_{1}-\delta_{2},\dots, \delta_{n-1}-\delta_{n},2\delta_{n}\}$    
\end{tabular}
\label{T:evenrootsosp} 
\end{table}
\end{center}

In the case when ${\mathfrak g}=\mathfrak{osp}(2n|2n)$ one has $\rho_{\0}\neq \rho_{\1}$. Instead, 
$$\rho_{\1}=\rho^{D_{n}}+2[\epsilon_{1}+\dots+\epsilon_{n}]+\rho^{C_{n}}$$
This necessitates the use of different techniques than the ones used for $A(n|n)$ and $\mathfrak{osp}(2n+1|2n)$ 
(when $\rho_{\0}=\rho_{\1}$). The root system for type $D_{n}$ embeds in the root system for type $C_{n}$ and  
one also has the relationship $\rho^{C_{n}}=\rho^{D_{n}}+[\epsilon_{1}+\dots+\epsilon_{n}]$. 

Consider the subalgebra $\mathfrak{gl}(n|n)$ in $\mathfrak{osp}(2n|2n)$ and the parabolic subalgebra 
generated by $\mathfrak{gl}(n|n)$ and the root vectors of weights $\{-\epsilon_{i}-\delta_{j}:\ 1\leq i,j \leq n\}$, and let $P$ be the corresponding parabolic subgroup scheme. 
From our prior section, $R^{\bullet}\text{ind}_{B}^{P} {\mathbb C}$ is isomorphic to a direct sum of trivial modules and 
$$\sum_{j=0}^{\infty} \dim R^{j}\text{ind}_{B}^{P} {\mathbb C}\ t^{j}=p_{\Sigma_{n}}(t^{2}).$$ 
It suffices to show that 
\begin{equation} \label{eq:Poincarequotient}
\sum_{i=0}^{\infty} \dim R^{i}\text{ind}_{P}^{G} {\mathbb C}\ t^{i}=p_{W_{\1}/\Sigma_{n}}(t^{2})=(1+t^{2})(1+t^{4})\dots (1+t^{2n})
\end{equation}
If this holds, then by Proposition~\ref{P:Poincarespectral}, $p_{G,B}(t)=p_{W_{\1}}(t^{2})$. 

For the case ${\mathfrak g}=\mathfrak{osp}(2n|2n)$, one has $W_{\1}\cong \Sigma_{n}\ltimes (\Z_{2})^{n-1}$. First observe that 
$$R^{n}\text{ind}_{P}^{G} {\mathbb C}|_{G_{\0}}\cong R^{n}\text{ind}_{P_{\0}}^{G_{\0}} \Lambda^{\bullet}(({\mathfrak g}_{\1}/{\mathfrak p}_{\1})^{*}) 
\cong R^{n}\text{ind}_{B_{\0}}^{G_{\0}} \Lambda^{\bullet}(({\mathfrak g}_{\1}/{\mathfrak p}_{\1})^{*}).$$
The weights of $({\mathfrak g}_{\1}/{\mathfrak p}_{\1})^{*}$ are the roots $-\Phi_{\mathfrak p}^{+}:=\{-\epsilon_{i}-\delta_{j}:\ 1\leq i,j \leq n\}$. 
Suppose that $w_{1}\circ \lambda+w_{2}\circ \mu=-\rho(J)$ where $J\subseteq \Phi_{\mathfrak p}^{+}$. Using the argument in 
Section~\ref{SS:TypeABC}(1), we can deduce that 
\begin{equation} \label{eq:lambda}
1\geq \lambda_{1}\geq \lambda_{2} \geq \dots \geq \lambda_{n-1} \geq |\lambda_{n}|
\end{equation} 
and 
\begin{equation} \label{eq:mu}
1\geq \mu_{1}\geq \mu_{2} \geq \dots \geq \mu_{n-1} \geq \mu_{n} \geq 0. 
\end{equation} 
From (\ref{eq:mu}), $\mu=\delta_{1}+\delta_{2}+\dots + \delta_{s}$. Then 
$$w_{2}\circ \mu=w_{2}(\mu+\rho^{C_{n}})-\rho^{C_{n}}.$$ 
If $s\geq 1$, then the first term in $\mu+\rho^{C_{n}}$ is $(n+1)\delta_{1}$ and must be sign changed to obtain a summand involving $-\delta_{j}$'s in 
$-\rho(J)$. However, if this term is sign changed to $-(n+1)$ and possibly permuted, then the corresponding term in $w_{2}\circ \mu$ is at most $-n-2$ which 
less than $-n$. This leads to a contradiction, thus $s=0$ and $\mu=0$. Consider $w_{2}\circ 0=w_{2}(\rho^{C_{n}})-\rho^{C_{n}}$ with 
$\rho^{C_{n}}=n\delta_{1}+(n-1)\delta_{2}+\dots+\delta_{n}=(n,n-1,\dots,1)$. Using (\ref{eq:constraint}) shows 
that $w_{2}$ must fix $n\delta_{1}$. This proves that 
$w_{1}\circ \lambda+w_{2}\circ 0=-\rho(J)$ where $J\subseteq \{-\epsilon_{i}-\delta_{j}:\ 1\leq i\leq n,\ 2\leq j \leq n\}$. 

Next from (\ref{eq:lambda}), $\lambda=\epsilon_{1}+\epsilon_{2}+\dots +\epsilon_{s} \pm \epsilon_{n}$ and consider 
$w_{1}\circ \lambda=w_{1}(\lambda+\rho^{D_{n}})-\rho^{D_{n}}$. Now if $s\geq 1$ then the term $n\epsilon_{1}$ in 
$\lambda+\rho^{D_{n}}$ must change sign so there is an $\epsilon$-coefficient in $w_{1}\circ \lambda$ less than or equal to $-n$. 
However, from the preceding paragraph, in $-\rho(J)$ the coefficient of $\epsilon_{i}$ is greater than $-(n-1)$. Therefore, $s=0$ and 
by the dominance condition, $\lambda=0$. 

We will prove (\ref{eq:Poincarequotient}) by induction $n$. Assume for $n-1$,  
$$\sum_{i=0}^{\infty} \dim R^{i}\text{ind}_{P}^{G} {\mathbb C}\ t^{i}=(1+t^{2})(1+t^{4})\dots (1+t^{2(n-1)})$$ 
given via $2^{(n-1)}$ solutions of $w_{1}\circ 0+ w_{2}\circ 0=-\rho(J)$, $J\subseteq \{\epsilon_{i}-\delta_{j}:\ 
2\leq i \leq n,\ 2\leq j \leq n\}$ with $l(w_{1})=l(w_{2})$. For $\mathfrak{osp}(2|2)$ (i.e., $n-1=1$), this can be verified directly. 

Suppose for ${\mathfrak g}=\mathfrak{osp}(2n|2n)$, one has 
\begin{equation}\label{eq:constraint}
w_{1}\circ 0+ w_{2}\circ 0=-\rho(J) 
\end{equation}
with $J\subseteq \{\epsilon_{i}-\delta_{j}:\ 
1\leq i \leq n,\ 1\leq j \leq n\}$. First consider $w_{2}\circ 0=w_{2}(\rho^{C_{n}})-\rho^{C_{n}}$ with 
$\rho^{C_{n}}=n\delta_{1}+(n-1)\delta_{2}+\dots+\delta_{n}=(n,n-1,\dots,1)$. From (\ref{eq:constraint}) one can deduce that  
that $w_{2}$ must fix $n\delta_{1}$ and either (i) fix $(n-1)\delta_{2}$ or (ii) permute and sign change $(n-1)\delta_{2}$ to 
$-(n-1)\delta_{n}$. In the first case (i),
$$w_{2}\circ 0=(0,0,*,*,\dots,*).$$ 
In the case (i) consider $w_{1}\circ 0$ where 
$\rho^{D_{n}}=(n-1)n\epsilon_{1}+(n-2)\delta_{2}+\dots+\epsilon_{n-1}=(n-1,n-2,\dots,1,0)$. 
Under $w_{1}$, either $(n-1)$ is fixed or gets sign changed to $-(n-1)$ and is permuted in 
the $\epsilon_{n}$-position. The latter is not possible because $w_{2}\circ 0=(0,0,*,*,\dots,*)$ (i.e.,  
only $\epsilon_{n}+\delta_{j}$ can occur for $j=3,\dots,n$). Hence, in case (i), 
$w_{1}\circ 0=(0,*,*,\dots,*)$. The conclusion is that in case (i), we are reduced to the 
$2^{(n-1)}$ solutions of $w_{1}\circ 0+ w_{2}\circ 0=-\rho(J)$ in $\mathfrak{osp}(2(n-1)|2(n-1))$. 

Next we handle case (ii). We can reduce to the solutions of (\ref{eq:constraint}) in $\mathfrak{osp}(2(n-1)|2(n-1))$ by multiplying by an element 
with length $2n$. In case (ii), $w_{2}\circ 0=(0,b_{1},b_{2},\dots,b_{n-2},-n)$ and $w_{1}\circ 0=(a_{1},a_{2},\dots,a_{n-1},-(n-1))$ with 
$$(a_{1},a_{2},\dots,a_{n-1},-(n-1))+(0,b_{1},b_{2},\dots,b_{n-2},-n)=-\rho(J).$$ 
Note that $a_{i},b_{j}\leq 0$. 

Set $\tau_{1}=s_{\epsilon_{n}}s_{n-1,\0,(1)}s_{n-2,\\0,(1)}\dots s_{1,\0,(1)}$ where $s_{\epsilon_{n}}(\epsilon_{j})=\epsilon_{j}$ for 
$j=1,2,\dots,n-1$ and $s_{\epsilon_{n}}(\epsilon_{n})=-\epsilon_{n}$. Even though $\tau_{1}$ is not in the Weyl group for 
type $D_{n}$, it can be shown that $\tau_{1}^{-1}w_{1}\circ 0=w_{1}^{\prime}\circ 0$ for some $w_{1}'\in \Sigma_{n}\ltimes ({\mathbb Z}_{2})^{n-1}$. 
By direct computation,  
$$
\tau_{1}^{-1}w_{1}\circ 0=(0, 1+a_{1},1+a_{2},\dots,1+a_{n-1}). 
$$ 
On the other hand, let $\tau_{2}=s_{n,\0,(2)}s_{n-1,\0,(2)}\dots s_{2,\0,(2)}\in \Sigma_{n}\ltimes ({\mathbb Z}_{2})^{n}$. One can 
verify that 
$$
\tau_{2}^{-1}w_{2}\circ 0=(0, 0, 1+b_{1}, 1+b_{2},\dots,1+b_{n-2}). 
$$

Next we need to show that 
$$\tau_{1}^{-1}w_{1}\circ 0+\tau_{2}^{-1}w_{2}\circ 0=
(0, 1+a_{1},1+a_{2},\dots,1+a_{n-1})+
(0, 0, 1+b_{1}, 1+b_{2},\dots,1+b_{n-2})=-\rho(J_{2}).$$ 
for some $J_{2}\subseteq \{\epsilon_{i}-\delta_{j}:\ 2\leq i \leq n,\ 2\leq j \leq n\}$.
From our assumption,  
$$(a_{1},a_{2},\dots,a_{n-1},-(n-1))+(0,b_{1},b_{2},\dots,b_{n-2},-n)=-\rho(J).$$ 
This implies that $\{\epsilon_{i}+\delta_{n}:\ 1\leq i \leq n\}\cup \{\epsilon_{n}+\delta_{j}:\ 2\leq j \leq n-1\}\subseteq J$, 
and 
$$(a_{1}+1,a_{2}+1,\dots,a_{n-1}+1,0)+(0,b_{1}+1,b_{2}+1,\dots,b_{n-2}+1,0)=-\rho(J_{1}).$$ 
for some $J_{1}\subseteq \{\epsilon_{i}-\delta_{j}:\ 1\leq i \leq n-1,\ 1\leq j \leq n-1\}$. This claim now 
follows by applying a permutation of the coordinates. We can now conclude by the induction hypothesis that 
$$\tau_{1}^{-1}w_{1}\circ 0+\tau_{2}^{-1}w_{2}\circ 0=\tilde{w}_{1}\circ 0+\tilde{w}_{2}\circ 0=-\rho(J_{2}).$$
for unique $\tilde{w}_{1},\tilde{w}_{2}\in \Sigma_{n-1}\ltimes ({\mathbb Z}_{2})^{n-2}$. Moreover, one can verify that 
$l(\tau_{j}\tilde{w}_{j})=n+l(\tilde{w}_{j})$ for $j=1,2$. Consequently, in case (ii), we are reduced to the 
$2^{(n-1)}$ solutions of $w_{1}\circ 0+ w_{2}\circ 0=-\rho(J)$ in $\mathfrak{osp}(2(n-1)|2(n-1))$ by multiplying by 
$\tau^{-1}=\tau_{1}^{-1}\tau_{2}^{-1}$ whose total length is $2n$. This proves (\ref{eq:Poincarequotient}). 

\subsection{${\mathfrak g}=\mathfrak{osp}(2(n+1)|2n)$ for $n\geq 1$} 

Consider the embedding $\mathfrak{osp}(2n|2n)\hookrightarrow \mathfrak{osp}(2(n+1)|2n)$, and let 
${\mathfrak p}$ be the parabolic subalgebra generated by $\mathfrak{osp}(2n|2n)$ and the root vectors 
with weights of the form $-\epsilon_{1}\pm \delta_{j}$ for $j=1,2,\dots,n$. Let $P$ be the parabolic subgroup scheme with $\text{Lie }P={\mathfrak p}$. 
One has $B\subseteq P \subseteq G$. 

In this case, we have $W_{\1}\cong \Sigma_{n}\ltimes ({\mathbb Z}_{2})^{n}$ for ${\mathfrak g}=\mathfrak{osp}(2(n+1)|2n)$. 
In order to show that $p_{G,B}(t)=p_{W_{\1}}(t^{2})$, we use Proposition~\ref{P:Poincarespectral} to 
reduce our computation to proving that $p_{G,P}(t)=1+t^{2n}$. Here we are using information about the Poincar\'e series 
for $\mathfrak{osp}(2n|2n)$. 

The weights of $({\mathfrak g}_{\1}/{\mathfrak p}_{\1})^{*}$ are $-\epsilon_{1}\pm \delta_{j}$ for $j=1,2,\dots,n$. 
Suppose we have a weight of the form $w_{1}\circ \lambda+w_{2}\circ \mu =-\rho(J)$ where $J\subseteq 
\{-\epsilon_{1}\pm \delta_{j}:\ j=1,2,\dots,n\}$. Then $w_{1}\circ \lambda=-k\epsilon_{1}=-k\omega_{1}$. Therefore, 
$\langle w_{1}\circ \lambda,\alpha^{\vee} \rangle =0$ for $\alpha\in \Delta_{\0,(1)}$. It follows that 
$\langle \lambda+\rho_{\0}^{(1)},w_{1}^{-1}\alpha^{\vee} \rangle =1$. Consequently, $w_{1}^{-1}\alpha \geq 0$ and 
$w_{1}^{-1}\alpha\in  \Delta_{\0,(1)}$. We have $w_{1}^{-1}\{\alpha_{2},\dots,\alpha_{n+1}\}\subseteq \{\alpha_{2},\dots,\alpha_{n+1}\}$, thus 
$\langle \lambda,\alpha_{j}\rangle =0$ for $j=2,3,\dots,n+1$. 

Now consider $w_{1}\circ s\omega_{1}=-k\omega_{1}$ or equivalently, 
\begin{equation*} 
w_{1}\circ s\epsilon_{1}=-k\epsilon_{1}.
\end{equation*}
A direct computation using the dot action for $D_{n+1}$ shows that there are two solutions: 
(i) $w_{1}=1$, $s=0$, $k=0$ and (ii) $s=0$, $k=2n$ and $l(w_{1})=2n$. This proves the assertion. 

\subsection{} \label{SS:TypeAosp} We now extend our computation for $p_{G,B}(t)$ when ${\mathfrak g}=A(p|q)$ and when $\mathfrak g=\mathfrak{osp}(p|q)$. 
We consider the following embeddings of Lie superalgebras ${\mathfrak g}^{\prime}\subseteq {\mathfrak g}$ and 
set of positive roots $\Phi_{\1,P}^{+}$. Set $n\leq m-1$. 
\renewcommand{\arraystretch}{1.2}
\begin{table}[htp]
\caption{Embeddings}
\begin{center}
\begin{tabular}{ccc}\label{T:Embeddings}
${\mathfrak g}^{\prime}$ & ${\mathfrak g}$  &  $\Phi_{\1,P}^{+}$ \ \ \ \ \ \ \ [$n\leq m-1$]  \\ \hline
$A(n|m-1)$ & $A(n|m)$  &   $\{\epsilon_{i}-\delta_{m}:\  1\leq i \leq n \}$  \\
$\mathfrak{osp}(2n+1|2(m-1))$ & $\mathfrak{osp}(2n+1|2m)$ & $\{-\epsilon_{i}+\delta_{1}:\ 2\leq i \leq n  \}\ \cup \{\epsilon_{i}+\delta_{1}:\ 1\leq i \leq n \}\cup \{\delta_{1}\}$   \\ 
$\mathfrak{osp}(2(m-1)+1|2n)$ & $\mathfrak{osp}(2m+1|2n)$ & $\{\epsilon_{1}-\delta_{i}:\ 1\leq i \leq n \}\cup 
 \{\epsilon_{1}+\delta_{i}:\ 1\leq i \leq n \}$       \\
$\mathfrak{osp}(2n|2(m-1))$ & $\mathfrak{osp}(2n|2m)$ &  $\{-\epsilon_{i}+\delta_{1}:\ 1\leq i\leq n \}\cup \{\epsilon_{i}+\delta_{1}:\ 1\leq i \leq n\} $   \\
$\mathfrak{osp}(2(m-1)|2n)$   & $\mathfrak{osp}(2m|2n)$ &  $\{\epsilon_{1}-\delta_{i}:\ 2\leq i \leq n\}\cup \{\epsilon_{1}+\delta_{i}:\ 1\leq i \leq  n\}$                \\
\end{tabular}
\end{center}
\label{T:sums}
\end{table}

Let ${\mathfrak p}$ be the subalgebra generated by ${\mathfrak b}$ and ${\mathfrak g}^{\prime}$ and $P$ be the corresponding parabolic subgroup scheme 
with ${\mathfrak p}=\text{Lie }P$. 
\begin{theorem} Let ${\mathfrak g}^{\prime}=\operatorname{Lie }G^{\prime}$ and ${\mathfrak g}=\operatorname{Lie }G$ be as in Table~\ref{T:Embeddings}. Then 
$p_{G,B}(t)=p_{G^{\prime},B^{\prime}}(t)$. 
\end{theorem}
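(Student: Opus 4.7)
The plan is to filter $B \subseteq P \subseteq G$ and apply the spectral-sequence factorization of Proposition~\ref{P:Poincarespectral}, which will reduce the desired equality $p_{G,B}(t) = p_{G^\prime,B^\prime}(t)$ to two sub-claims: (i) $p_{P,B}(t) = p_{G^\prime,B^\prime}(t)$, and (ii) $p_{G,P}(t) = 1$. The hypotheses of Proposition~\ref{P:Poincarespectral} will be verified along the way: concentration of $R^{\bullet}\operatorname{ind}_B^P {\mathbb C}$ in even degrees is inherited from the previously established Poincar\'e series for ${\mathfrak g}^\prime$ (which are polynomials in $t^2$ for each row of Table~\ref{T:Embeddings} by Sections~\ref{SS:TypeABC} and \ref{S:ospeven}), while hypothesis (b) is part of claim (ii).

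For (i), the parabolic ${\mathfrak p}$ admits a Levi decomposition whose Levi component contains ${\mathfrak g}^\prime$ and whose ``Borel-type'' subalgebra obtained by intersecting with ${\mathfrak b}$ agrees with ${\mathfrak b}^\prime$. Transitivity of induction together with the fact that the trivial module factors trivially through the nilpotent radical of ${\mathfrak p}$ produces a $G^\prime$-module isomorphism $R^\bullet \operatorname{ind}_B^P {\mathbb C} \cong R^\bullet \operatorname{ind}_{B^\prime}^{G^\prime} {\mathbb C}$, giving (i).

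For (ii), I apply Proposition~\ref{P:Gzeroiso} and transitivity to obtain the $G_\0$-level identification
\[
R^j \operatorname{ind}_P^G {\mathbb C}\big|_{G_\0} \cong R^j \operatorname{ind}_{B_\0}^{G_\0}\Lambda^{\bullet}(({\mathfrak g}_\1/{\mathfrak p}_\1)^*).
\]
The weights of $({\mathfrak g}_\1/{\mathfrak p}_\1)^*$ are $\{-\alpha : \alpha \in \Phi_{\1,P}^+\}$, and the crucial structural feature shared by all five rows of Table~\ref{T:Embeddings} is that every root in $\Phi_{\1,P}^+$ has a common nonzero coefficient in one distinguished coordinate: $\delta_m$ in the first row; $\delta_1$ in rows two and four; $\epsilon_1$ in rows three and five. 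Consequently, for $J \subseteq \Phi_{\1,P}^+$ with $|J| = k \geq 1$, the projection of $-\rho(J)$ onto that coordinate is precisely $-k$. Using the K{\"u}nneth formula to split the induction across $G_{\0,(1)} \times G_{\0,(2)}$ and invoking the classical Bott-Borel-Weil theorem on the factor containing the distinguished coordinate, the weight $-k$ added to the corresponding $\rho$ produces a singular weight for every $1 \leq k \leq |\Phi_{\1,P}^+|$, and hence the cohomology of this factor vanishes. The bound $|\Phi_{\1,P}^+| \leq 2m-2$ that keeps $k$ in the singular regime (below the first regular value of $2m-1$ for $B_m/C_m$, or $2m-3$ for $D_m$) follows from the hypothesis $n \leq m-1$. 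Only $J = \emptyset$ yields a nonvanishing contribution, giving $R^0 \operatorname{ind}_P^G {\mathbb C}|_{G_\0} = {\mathbb C}$ and $R^{>0} = 0$. Since none of the Lie superalgebras in Table~\ref{T:Embeddings} is of type $Q$, Proposition~\ref{P:selfext}(a) promotes this $G_\0$-isomorphism to a $G$-module isomorphism, yielding $p_{G,P}(t) = 1$.

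The main obstacle will be the uniform case-by-case BBW analysis in step (ii), since the Weyl groups of types $A_{m-1}$, $B_m$, $C_m$, and $D_m$ each require a slightly different singular-locus calculation — particularly in the $\mathfrak{osp}$ cases, where sign changes in the Weyl group expand the set of singular $k$ values and one must verify that $|-k + \rho_i^{\text{dist}}|$ coincides with $\rho_j^{\text{dist}}$ for some $j \neq i$. Once (i) and (ii) are established, Proposition~\ref{P:Poincarespectral} combines them into $p_{G,B}(t) = p_{G,P}(t) \cdot p_{P,B}(t) = p_{G^\prime,B^\prime}(t)$, completing the proof.
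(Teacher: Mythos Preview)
Your proposal follows the same overall architecture as the paper's proof: use Proposition~\ref{P:Poincarespectral} with the chain $B \subseteq P \subseteq G$, identify $R^\bullet\operatorname{ind}_B^P{\mathbb C}$ with $R^\bullet\operatorname{ind}_{B'}^{G'}{\mathbb C}$, and then reduce everything to showing $p_{G,P}(t)=1$ via a BBW weight analysis on $\Lambda^\bullet(({\mathfrak g}_{\1}/{\mathfrak p}_{\1})^*)$. Your analysis of step~(ii) is actually more direct than the paper's: rather than first invoking the arguments of Sections~\ref{SS:TypeABC}(1) and~\ref{S:ospeven} to force $\sigma = w_1\circ 0 + w_2\circ 0$ and then analysing the distinguished coordinate, you observe immediately that every weight of $\Lambda^k$ has distinguished coordinate $\pm k$ and check that this forces singularity of $\sigma+\rho$ in the relevant factor for $1\le k\le |\Phi_{\1,P}^+|$. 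Both routes arrive at the same BBW vanishing, but yours bypasses the dominant-weight identification step.

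There are a few minor numerical slips that you should clean up. For row~1 ($A(n|m)$) the $\delta_m$-coefficient of each $\alpha\in\Phi_{\1,P}^+$ is $-1$, so $-\rho(J)$ has $\delta_m$-coefficient $+k$, not $-k$; the singularity check in type $A_{m-1}$ then reads $k\in\{1,\dots,m-1\}$, which holds since $k\le n\le m-1$. Also, your stated ``first regular values'' are off by one in places: for $C_m$ the first regular nonzero $k$ is $2m$ (not $2m-1$), and for $D_m$ it is $2m-2$ (not $2m-3$). None of these affect the conclusion, because in every row $|\Phi_{\1,P}^+|$ still lies strictly below the true threshold under the hypothesis $n\le m-1$.
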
 

\begin{proof} One has $B \subseteq  P  \subseteq G$. We prove the theorem by induction on $m$. One has $R^{\bullet}\text{ind}_{B}^{P} {\mathbb C}$ as a $G^{\prime}$-module identifies with 
$R^{\bullet}\text{ind}_{B^{\prime}}^{G^{\prime}} {\mathbb C}$ (cf. \cite[I. 6.14(1)]{Jan}). The cohomology in odd degree vanishes and cohomology in  even degree is isomorphic to a direct sum of trivial modules. 
Therefore, by Proposition~\ref{P:Poincarespectral} it suffices to show that $p_{G,P}(t)=1$ to prove that $p_{G,B}(t)=p_{G^{\prime},B^{\prime}}(t)$. 

One has 
$$R^{j}\text{ind}_{P}^{G}{\mathbb C}|_{G_{\0}}\cong R^{j}\text{ind}_{P_{\0}}^{G_{\0}} \Lambda^{\bullet}(({\mathfrak g}_{\1}/{\mathfrak p}_{\1})^{*}) \cong 
R^{j}\text{ind}_{B_{\0}}^{G_{\0}} \Lambda^{\bullet}(({\mathfrak g}_{\1}/{\mathfrak p}_{\1})^{*}).$$
The last isomorphism follows by using the spectral sequence relating the composition of induction functors \cite[I. 4.5(c) Proposition]{Jan} and the fact that $R^{t}\text{ind}_{B_{\0}}^{P_{\0}}{\mathbb C}=0$ for $t>0$. 

The weights of ${\mathfrak g}_{\1}/{\mathfrak p}_{\1}$ coincide with $\Phi_{\1,P}^{+}$. 
Let $\sigma$ be a weight of $\Lambda^{\bullet}(({\mathfrak g}_{\1}/{\mathfrak p}_{\1})^{*})$ with 
$R^{j}\text{ind}_{B_{\0}}^{G_{\0}} \sigma\neq 0$ for some $j\geq 0$. 

If ${\mathfrak g}=A(n|m)$ then by the argument given in Section~\ref{SS:TypeABC}(1), $\sigma=w_{1}\circ \lambda_{1}+w_{2}\circ 0$. 
It follows that $w_{2}\circ 0=c\ \delta_{m}$ where $c\geq 0$. This can only happen if $c=0$, which implies that $\sigma=0$. This proves the statement 
of the theorem for $A(n|m)$. 

In the other cases, the arguments given in Sections~\ref{SS:TypeABC}(1), and ~\ref{S:ospeven} show that $\sigma=w_{1}\circ 0+w_{2}\circ 0$. 
Consider the second case in Table~\ref{T:sums}. Then $w_{2}\circ 0=-c\delta_{1}=-c\omega_{1}$. In the root system $C_{m}$ this means that 
$c=0$ or $c=2m$. However, $c\leq 2n\leq 2(m-1)<2m$ which implies that $c=0$, thus $\sigma=0$. The other three cases in the table are handled with 
a similar argument. 
\end{proof}

\subsection{The computation of $R^{\bullet}\text{ind}_{B}^{G}{\mathbb C}$ and $p_{G,B}(t)$} The following theorem relates the sheaf theoretic Poincar\'e polynomial 
with the Poincar\'e polynomial for $W_{\1}$ when ${\mathfrak g}$ is not of type $P$. 

\begin{theorem} \label{T:Poincareseriesequal} Let ${\mathfrak g}$ be a classical simple Lie superalgebra with ${\mathfrak g}=\operatorname{Lie }G$. 
Assume that ${\mathfrak g}$ is not isomorphic to $P(n)$. 
Let $B$ be the parabolic subgroup such that ${\mathfrak b}=\operatorname{Lie }B$ where ${\mathfrak b}$ is the parabolic subalgebra 
defined in Table~\ref{T:BBW-roots}. Then 
\begin{itemize} 
\item[(a)] $R^{\bullet}\operatorname{ind}_{B}^{G}{\mathbb C}$ is a direct sum of trivial modules. 
\item[(b)] The number of trivial modules in $R^{n}\operatorname{ind}_{B}^{G}{\mathbb C}$ is given by 
$$p_{G,B}(t)=z_{{\mathfrak b},{\mathfrak g}}(t)=p_{W_{\1}}(s)$$ 
where $s$ is the parameter defined in Table~\ref{T:Poincareseries}. 
\end{itemize} 
\end{theorem}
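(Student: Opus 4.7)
The plan is to assemble this as the final summary result of the section by case analysis on the classical simple Lie superalgebras $\mathfrak{g}$ other than $P(n)$. The second equality $z_{\mathfrak{b},\mathfrak{g}}(t) = p_{W_{\1}}(s)$ is precisely the content of Theorem~\ref{T:zcompute} and so requires no further work. Hence the whole task reduces to establishing the first equality $p_{G,B}(t) = z_{\mathfrak{b},\mathfrak{g}}(t)$, and this has in fact been carried out piece-by-piece throughout Section 4; the theorem is essentially an organizational statement collecting those computations.

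First I would dispose of the exceptional cases $\mathfrak{g} = D(2,1,\alpha),\,G(3),\,F(4)$ by quoting Theorem~\ref{T:exceptionalsheaf}, which gives $p_{G,B}(t) = 1+t^{2} = p_{W_{\1}}(t^{2})$. Next, for the Type~$Q$ algebras $\mathfrak{q}(n)$ and $\mathfrak{psq}(n)$, I would apply Example~\ref{E:TypeQ}, where the coincidence $\rho_{\1} = \rho_{\0}$ together with Proposition~\ref{P:selfext}(b) yields $p_{G,B}(t) = p_{W_{\1}}(t)$. For the diagonal cases $A(n|n)$ and $\mathfrak{osp}(2n+1|2n)$, the detailed three-step analysis in Section~\ref{SS:TypeABC}, grounded in the combinatorial identity $w\cdot 0 = -\rho(\Phi(w))$ of Proposition~\ref{P:combinatorics}, produces $p_{G,B}(t) = p_{W_{\1}}(t^{2})$.

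For the remaining orthosymplectic families I would use the reduction machinery based on the spectral sequence of Proposition~\ref{P:Poincarespectral}. Specifically, for $\mathfrak{g} = \mathfrak{osp}(2n|2n)$, Section~\ref{S:ospeven} identifies a parabolic $P$ with Levi containing $\mathfrak{gl}(n|n)$ and verifies, by induction on $n$ using the explicit dot-action computation, that $p_{G,P}(t) = (1+t^{2})(1+t^{4})\cdots(1+t^{2n})$, whence multiplication by the known $p_{P,B}(t) = p_{\Sigma_{n}}(t^{2})$ gives $p_{G,B}(t) = p_{W_{\1}}(t^{2})$. For $\mathfrak{g} = \mathfrak{osp}(2(n+1)|2n)$ the analogous embedding $\mathfrak{osp}(2n|2n) \hookrightarrow \mathfrak{osp}(2(n+1)|2n)$ yields $p_{G,P}(t) = 1+t^{2n}$ by a short dot-action argument on the $D_{n+1}$ side, and Proposition~\ref{P:Poincarespectral} combined with the $\mathfrak{osp}(2n|2n)$ case closes it.

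Finally, the generic $A(p|q)$ and $\mathfrak{osp}(p|q)$ cases with unequal parameters are handled by the reduction theorem of Section~\ref{SS:TypeAosp}, which shows $p_{G,B}(t) = p_{G',B'}(t)$ for the listed superalgebra embeddings $\mathfrak{g}' \subset \mathfrak{g}$. Iterating this reduction peels off one index at a time until one reaches a base case already treated (either $A(n|n)$, or $\mathfrak{osp}(2n+1|2n)$, or $\mathfrak{osp}(2n|2n)$, or $\mathfrak{osp}(2(n+1)|2n)$), at which point the Poincaré polynomial of the reduced algebra equals that of the original $W_{\1}$ since the Weyl groups of the defect cores agree. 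The main obstacle is bookkeeping: one must check that in every reduction step $W_{\1}$ is preserved (so that $p_{W_{\1}}(s)$ on both sides matches), and that the induction on the defect $\min(p,q)$ actually terminates in one of the explicitly computed base cases. Once this case-by-case verification is complete, the equalities $p_{G,B}(t) = z_{\mathfrak{b},\mathfrak{g}}(t) = p_{W_{\1}}(s)$ follow immediately in each instance listed in Table~\ref{T:Poincareseries}.
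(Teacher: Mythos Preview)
Your proposal is correct and follows essentially the same approach as the paper's proof, which is simply a short case-by-case summary citing Theorem~\ref{T:exceptionalsheaf} for the exceptional cases, Example~\ref{E:TypeQ} for type $Q$, and Section~\ref{SS:TypeAosp} (together with the preceding subsections~\ref{SS:TypeABC}--\ref{S:ospeven} it builds on) for the type $A$ and orthosymplectic families. Your write-up is in fact more explicit than the paper's about how the reduction in Section~\ref{SS:TypeAosp} terminates in the base cases and why $W_{\1}$ is unchanged along the way, which is a helpful elaboration rather than a deviation.
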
 

\begin{proof} Parts (a) and (b) were proved for the various classical Lie superalgebras in the following way. First, it was established that 
$R^{\bullet}\operatorname{ind}_{B_{\0}}^{G_{\0}} \Lambda^{\bullet}(({\mathfrak g}_{\1}/{\mathfrak b}_{\1})^{*})$ is a direct sum of trivial modules. 
Now from Propositions ~\ref{P:Gzeroiso} and ~\ref{P:selfext}, it follows that $R^{\bullet}\operatorname{ind}_{B}^{G}{\mathbb C}$ is a direct 
sum of trivial modules. Part (b) was verified along the way via the calculation of $R^{\bullet}\operatorname{ind}_{B_{\0}}^{G_{\0}} \Lambda^{\bullet}(({\mathfrak g}_{\1}/{\mathfrak b}_{\1})^{*})$. 

For the statements of (a) and (b) the cases when ${\mathfrak g}=D(2,1,\alpha)$, $G(3)$ and $F(4)$ were proved in Theorem~\ref{T:exceptionalsheaf}. 
For type $Q$ the statements was verified in Example~\ref{E:TypeQ}, and for the type $A$ families and orthosymplectic Lie superalgebras in 
Section~\ref{SS:TypeAosp}. 

\end{proof}

\subsection{} The preceding theorem motivates the following definition. 

\begin{definition} Let $G$ be an algebraic supergroup where ${\mathfrak g}=\operatorname{Lie }G$ is a classical simple Lie superalgebra 
and $B$ be a parabolic subgroup with ${\mathfrak b}=\operatorname{Lie }B$ such that 
\begin{itemize} 
\item[(a)] ${\mathfrak b}={\mathfrak b}_{\0}\oplus {\mathfrak b}_{\1}$ where ${\mathfrak b}_{\1}\cong {\mathfrak f}_{\1} 
\oplus {\mathfrak u}_{\1}$ where ${\mathfrak b}_{\0}$ is a Borel subalgebra for ${\mathfrak g}_{\0}$. 
\item[(b)] There exists a finite reflection group $W_{\1}$ such that as graded vector spaces, 
$$\operatorname{H}^{\bullet}({\mathfrak b},{\mathfrak b}_{\0},{\mathbb C})\cong 
\operatorname{H}^{\bullet}({\mathfrak g},{\mathfrak g}_{\0},{\mathbb C})\otimes {\mathbb C}[W_{\1}]_{\bullet}.$$ 
\end{itemize} 
Then ${\mathfrak b}$ is called an {\em BBW parabolic subalgebra} if and only if 
$$p_{G,B}(t)=z_{{\mathfrak b},{\mathfrak g}}(t)=p_{W_{\1}}(s)$$ 
where $s=t^{r}$ for some $r\geq 1$. 
\end{definition}

\section{Results for the Lie superalgebra ${\mathfrak p}(n)$} 

\subsection{} In this section we will present results for the Lie superalgebra ${\mathfrak p}(n)$ and explain how the theory 
differs from the other classical simple Lie superalgebras. Let  ${\mathfrak g}$ be the Lie superalgebra ${\mathfrak p}(n)$ where $n\geq 2$. 
This Lie superalgebra embeds into $\mathfrak{gl}(n|n)$ as $2n \times 2n$ matrices of the form
\begin{equation}\label{E:Pmatrix}\left( 
\begin{array}{c|c}
A&B\\\hline
C&-A^t
\end{array}
\right),
\end{equation}
where $A, B$ and $C$ are $n \times n$ matrices over ${\mathbb C}$ with $A\in \mathfrak{sl}_{n}({\mathbb C})$, $B$ symmetric, and $C$ skew-symmetric.  

Let $V$ be the $n$-dimensional natural representation for $\mathfrak{sl}_{n}({\mathbb C})$ with weights $\epsilon_{j}$, $j=1,2,\dots,n$. 
One has 
$${\mathfrak g}_{\0 } \cong \mathfrak{sl}_{n}({\mathbb C}) \text{ and } {\mathfrak g}_{\1} \cong S^2(V) \oplus \Lambda^{2}(V^{*}).$$ 
The weights of ${\mathfrak g}_{\1}$ are given by 
$$\Phi_{\1}=\{\epsilon_{i}+\epsilon_{j}:\ 1\leq i\leq j \leq n\}\cup \{ -\epsilon_{i}-\epsilon_{j}:\ 1\leq i < j \leq n\}.$$
The Lie superalgebra $\widetilde{{\mathfrak p}}(n)$, which is an enlargement of ${\mathfrak p}(n)$, is constructed by taking ${\mathfrak g}_{\0 } \cong \mathfrak{gl}_{n}({\mathbb C})$. 

\subsection{Cohomology and Hilbert Series}\label{SS:BBWp(n)} For the sake of convenience, we will redefine the detecting subalgebra ${\mathfrak f}$ as follows. 
The vector space ${\mathfrak f}_{\1}$ is the span of the root vectors in ${\mathfrak g}_{\1}$ whose weights are of the form 
$$\Phi_{{\mathfrak f}_{\1}}=\begin{cases} 
\{\pm (\epsilon_{1+j}+\epsilon_{2l-j})\} & \text{for $j=0,1,\dots, l-1$,  $n=2l$} \\
\{\pm (\epsilon_{1+j}+\epsilon_{2l+1-j}), 2\epsilon_{l+1} \}  & \text{for $j=0,1,\dots, l-1$, $n=2l+1$}. \\
\end{cases} 
$$ 
Set ${\mathfrak f}_{\0}=[{\mathfrak f}_{\1},
{\mathfrak f}_{\1}]$ and ${\mathfrak f}={\mathfrak f}_{\0}\oplus {\mathfrak f}_{\1}$. 

In both cases when $n$ is even or odd, $H$ is a torus of dimension $l$ and $N/N_{\0}\cong \Sigma_{l}\ltimes ({\mathbb Z}_{2})^{l}$. One can define a parabolic 
subalgebra ${\mathfrak b}$ as follows. We have 
$$\Phi_{\1}=\{\epsilon_{i}+\epsilon_{j}:\ 1\leq i,j \leq n\}\cup \{-\epsilon_{i}-\epsilon_{j}:\ 1\leq i<j \leq n\}.$$ 
Set 
$$\Phi_{\1}^{-}= \{\epsilon_{i}+\epsilon_{j}:\ n+1< i+j \}\cup \{-\epsilon_{i}-\epsilon_{j}:\ i<j,\ i+j<n+1\}$$ 
and ${\mathfrak b}$ be the parabolic subalgebra generated by the root vectors with roots in 
$\Phi_{\0}^{-}\cup \Phi_{{\mathfrak f}_{\1}} \cup \Phi_{\1}^{-}$ and ${\mathfrak t}_{\0}$. 
The defining hyperplanes for the parabolic are given by 
$$
{\mathcal H}=\begin{cases}  \sum_{i=1}^l x_i (E_i - E_{2l+1-i}),\  x_1>x_2>\cdots > x_l >0   & \text{$n=2l$} \\
\sum_{i=1}^l x_i (E_i - E_{2l+2-i})$,  $x_1>x_2>\cdots > x_l >0  & \text{$n=2l+1$}.         \\       
\end{cases}   
$$

The computation of $\operatorname{H}^{\bullet}({\mathfrak b},{\mathfrak b}_{\0},
{\mathbb C})=S^{\bullet}({\mathfrak f}_{\1}^{*})^{T_{\0}}$ is given in the table below. 

\renewcommand{\arraystretch}{1.2}
\begin{table}[htp]
\caption{Cohomology and Hilbert Series}
\begin{center}
\begin{tabular}{ccc}
${\mathfrak g}$ &  $W_{\1}$ & $\operatorname{H}^{\bullet}({\mathfrak b},{\mathfrak b}_{\0},{\mathbb C})$ \\ \hline
$\mathfrak{p}(n)$, $n=2l$ & $ \Sigma_{l}\ltimes ({\mathbb Z}_{2})^{l}$ &  ${\mathbb C}[x_{1}y_{1},x_{2}y_{2},\dots,x_{l}y_{l},x_{1}x_{2}\dots x_{l},y_{1}y_{2}\dots y_{l}]$       \\                                                   
$\mathfrak{p}(n)$, $n=2l+1$ & $ \Sigma_{l}\ltimes ({\mathbb Z}_{2})^{l}$ &  ${\mathbb C}[x_{1}y_{1},x_{2}y_{2},\dots,x_{l}y_{l},x_{1}^{2}x_{2}^{2}\dots x_{l}^{2}x_{l+1}]$        \\                                                   
\end{tabular}
\end{center}
\label{T:dimensions}
\end{table}

The goal for the remainder of this section is to compute $z_{{\mathfrak b},{\mathfrak g}}(t)$ when $n$ is even and when $n$ is odd. 

In the case when 
$n=2l$ is even, set 
$$S={\mathbb C}[x_{1}y_{1},x_{2}y_{2},\dots,x_{l}y_{l},x_{1}x_{2}\dots x_{l},y_{1}y_{2}\dots y_{l}]$$ 
and 
$$T={\mathbb C}[f_{1},f_{2},\dots,f_{l-1},x_{1}x_{2}\dots x_{l},y_{1}y_{2}\dots y_{l}]$$ 
where $f_{j}$ is the $jth$ symmetric polynomial in $\{x_{1}y_{1},x_{2}y_{2},\dots,x_{l}y_{l}\}$. 
Then as in the case for $\mathfrak{sl}(l | l)$, $S$ is free $T$-module of rank $|\Sigma_{l}|$. Furthermore, if $p_{S}(t)$ (resp. $p_{T}(t)$) are the Poincar\'e polynomials of 
$S$ (resp. $T$) then 
\begin{equation}
p_{\Sigma_l}(t^{2})=p_{S}(t)/p_{T}(t)=p_{{\mathfrak b}}(t)/p_{T}(t).
\end{equation} 
Now we use the fact that $T$ is a polynomial algebra generated in degrees $2,4,\dots, 2l-2$, $l$ and $l$. 
Therefore, 
\begin{equation} 
p_{\mathfrak b}(t)=p_{\Sigma_{l}}(t^2)\cdot p_{T}(t)=\frac{(1-t^{2l})}{(1-t^{2})^{l}(1-t^{l})^{2}}. 
\end{equation} 

From \cite[Table 1]{BKN1}
$\text{H}^{\bullet}({\mathfrak g},{\mathfrak g}_{\0},{\mathbb C})$ is a polynomial algebra generated in degrees 
$4,8,\dots,4(l-1)$, $l$, and $n$. Consequently, 
\begin{equation}\label{eq:z-neven}
z_{{\mathfrak b},{\mathfrak g}}(t)=\frac{(1-t^{4})(1-t^{8})\dots (1-t^{4(l-1)})(1-t^{n})(1+t^{l})}{(1-t^{2})^{l}}=p_{W_{\1}'}(t^2)p_{\Z_2}(t^l)
\end{equation} 
where $W_{\1}'=\Sigma_l\ltimes(\Z_2)^{l-1} $.

In the case when $n=2l+1$ is odd, $\operatorname{H}^{\bullet}({\mathfrak b},{\mathfrak b}_{\0},{\mathbb C})$ 
is a polynomial algebra with $l$ generators in degree $2$ and one generator in degree $n=2l+1$. On the other hand, 
$\operatorname{H}^{\bullet}({\mathfrak g},{\mathfrak g}_{\0},{\mathbb C})$ is a polynomial algebra with generators in 
degrees $4,8,\dots,4l$ and $n$ (cf. \cite[Table 1]{BKN1}). Therefore, for $n$ odd, 
\begin{equation}\label{eq:z-nodd}
z_{{\mathfrak b},{\mathfrak g}}(t)=\frac{(1-t^{4})(1-t^{8})\dots (1-t^{4l})}{(1-t^{2})^{l}}=p_{W_{\1}}(t^2).
\end{equation} 
Note that after cancellation by the factors $(1-t^{2})^{l}$ in (\ref{eq:z-neven}) and (\ref{eq:z-nodd}), one obtains that 
$$z_{{\mathfrak b},{\mathfrak g}}(1)=2^{l}\cdot (l)!=|W_{\1}|.$$

\subsection{${\mathfrak p}(2)$ and ${\mathfrak p}(3)$} First let ${\mathfrak g}={\mathfrak p}(2)$. 
Then $\Phi_{\1}=\{2\epsilon_{2}\}=\{-\alpha\}$ where $\alpha$ is the positive root in ${\mathfrak g}_{\0}=\mathfrak{sl}_{2}$. 
Therefore, one sees that 
$$
R^{j}\text{ind}_{B_{\0}}^{G_{\0}}\Lambda^{\bullet}(({\mathfrak g}_{\1}/{\mathfrak b}_{\1})^{*})=\begin{cases} 
{\mathbb C} & \text{$j=0,1$}\\ 
0  & \text{else}. \\
\end{cases} 
$$
It follows that 
\begin{equation}
z_{{\mathfrak b},{\mathfrak g}}(t)=\frac{(1-t^{2})(1+t)}{(1-t^{2})}=1+t=p_{W_{\1}}(t)=p_{G,B}(t)
\end{equation} 
and ${\mathfrak b}$ is a BBW parabolic subalgebra. 

Next, let ${\mathfrak g}={\mathfrak p}(3)$. It will be convenient to use the root basis and the fundamental weight basis 
for our calculations for $\Phi_{\0}=A_{2}$. One has 
$$\Phi_{\1}^{-}=\{2\epsilon_{3}, \epsilon_{2}+\epsilon_{3}, -\epsilon_{1}-\epsilon_{2}\}=\{2\omega_{2}, -\omega_{1}, -\omega_{2}\}.$$
Now $-\omega_{1}, -\omega_{2}\in \overline{C}_{\mathbb Z}$, and 
$$s_{\alpha_{2}}\cdot (-2\omega_{2})=-\omega_{1}\in \overline{C}_{\mathbb Z}-X(T_{\0})_{+}.$$ 
Therefore, $R^{\bullet}\text{ind}_{B_{\0}}^{G_{\0}} \Lambda^{1}(({\mathfrak g}_{\1}/{\mathfrak b}_{\1})^{*})=0$. 
Similarly, 
$$s_{\alpha_{1}}\cdot (\omega_{2}-3\omega_{1})=\omega_{1}-\omega_{2}\in \overline{C}_{\mathbb Z}-X(T_{\0})_{+},$$ 
thus, $R^{\bullet}\text{ind}_{B_{\0}}^{G_{\0}} \Lambda^{3}(({\mathfrak g}_{\1}/{\mathfrak b}_{\1})^{*})=0$. 

The weights of $\Lambda^{2}(({\mathfrak g}_{\1}/{\mathfrak b}_{\1})^{*})$ are $\{2\omega_{2}-\omega_{1}, -3\omega_{2}, -\omega_{1}-\omega_{2}\}$. 
The weight $-2\omega_{2}-\omega_{1}$ is conjugate to $-\omega_{2}$ by $s_{\alpha_{1}}s_{\alpha_{2}}$, and $-\omega_{1}-\omega_{2}
\in \overline{C}_{\mathbb Z}-X(T_{\0})_{+}$. So these weights do not contribute to give any cohomology. On the other hand, 
\begin{equation} 
(s_{\alpha_{2}}s_{\alpha_{1}})\cdot 0=-3\omega_{2}.  
\end{equation} 
Consequently, $R^{\bullet}\text{ind}_{B_{\0}}^{G_{\0}} \Lambda^{2}(({\mathfrak g}_{\1}/{\mathfrak b}_{\1})^{*})={\mathbb C}$. 

In summary, one has 
\begin{equation}
z_{{\mathfrak b},{\mathfrak g}}(t)=1+t^{2}=p_{W_{\1}}(t^{2})=p_{G,B}(t)
\end{equation} 
and ${\mathfrak b}$ is again a BBW parabolic subalgebra. 

\subsection{${\mathfrak p}(4)$} Next consider the Lie superalgebra ${\mathfrak g}=\mathfrak{p}(4)$. One has 
\begin{eqnarray*} 
\Phi_{\1}^{-}&=&\{2\epsilon_{4},2\epsilon_{3},\epsilon_{3}+\epsilon_{4},\epsilon_{2}+\epsilon_{4},-\epsilon_{1}-\epsilon_{2},-\epsilon_{1}-\epsilon_{2}\}\\
&=&\{-2\omega_{3},-2\omega_{2}+2\omega_{3},-\omega_{2},-\omega_{1}+\omega_{2}-\omega_{3},-\omega_{2},-\omega_{1}+\omega_{2}-\omega_{3}\}. 
\end{eqnarray*} 
It is useful to express the elements in $\Phi_{\1}^{-}$ in terms of fundamental weights of ${\mathfrak g}_{\0}=\mathfrak{sl}_{n}$. Note that $-\omega_{2}$ and 
$-\omega_{1}+\omega_{2}-\omega_{3}$ occur with multiplicity two. 

The weights of $\Lambda^{1}(({\mathfrak g}_{\1}/{\mathfrak b}_{\1})^{*})$ are precisely the ones in $\Phi_{\1}^{-}$. All of these weights are conjugate to a weight in 
$\overline{C}_{\mathbb Z}-X(T_{\0})_{+}$, thus $R^{\bullet}\text{ind}_{B_{\0}}^{G_{\0}} \Lambda^{1}(({\mathfrak g}_{\1}/{\mathfrak b}_{\1})^{*})=0$. 
Next observe that $\Lambda^{6}(({\mathfrak g}_{\1}/{\mathfrak b}_{\1})^{*})$ is one-dimensional and spanned by a vector of weight $-2\rho_{\0}$. 
Therefore, $\Lambda^{5}(({\mathfrak g}_{\1}/{\mathfrak b}_{\1})^{*})\cong \Lambda^{1}(({\mathfrak g}_{\1}/{\mathfrak b}_{\1})^{*})^{*}\otimes (-2\rho_{\0})$. 
Let $\mu=-\lambda-2\rho_{\0}$ be a weight of $\Lambda^{5}(({\mathfrak g}_{\1}/{\mathfrak b}_{\1})^{*})$ where $\lambda$ is a weight of  $\Lambda^{1}(({\mathfrak g}_{\1}/{\mathfrak b}_{\1})^{*})$. 
Then 
$$w_{0}\cdot \mu=w_{0}(-\lambda)=-w_{0}\lambda.$$ 
The possible weights of the form $w_{0}\cdot \mu$ are $\{-2\omega_{1},2\omega_{1}-2\omega_{2},-\omega_{2},-\omega_{1}+\omega_{2}+\omega_{3}\}$ 
which are all conjugate to a weight in $\overline{C}_{\mathbb Z}-X(T_{\0})_{+}$. Consequently, $R^{\bullet}\text{ind}_{B_{\0}}^{G_{\0}} \Lambda^{5}(({\mathfrak g}_{\1}/{\mathfrak b}_{\1})^{*})=0$. 

The distinct weights of $\Lambda^{3}(({\mathfrak g}_{\1}/{\mathfrak b}_{\1})^{*})$ are 
$$\{-3\omega_{2},-\omega_{1}-\omega_{2}-\omega_{3},-\omega_{1}-3\omega_{3},-2\omega_{2}-2\omega_{3}, -2\omega_{1}+2\omega_{2}--4\omega_{3},-\omega_{1}-2\omega_{2}+\omega_{1},-2\omega_{1}
-2\omega_{1}+\omega_{2}-2\omega_{3}\}.$$
A lengthy verification shows that all of the weights above are conjugate to a weight in $\overline{C}_{\mathbb Z}-X(T_{\0})_{+}$, thus 
$R^{\bullet}\text{ind}_{B_{\0}}^{G_{\0}} \Lambda^{3}(({\mathfrak g}_{\1}/{\mathfrak b}_{\1})^{*})=0$. 

The distinct weights in $\Lambda^{2}(({\mathfrak g}_{\1}/{\mathfrak b}_{\1})^{*})$ that are conjugate to a weight in $\overline{C}_{\mathbb Z}-X(T_{\0})_{+}$ are 
$$\{-2\omega_{2},-\omega_{2}-2\omega_{3},-\omega_{1}+\omega_{2}-3\omega_{3},-\omega_{1}-\omega_{2}+\omega_{3},-\omega_{1}-\omega_{3}\}.$$ 
For the other two weights: $-3\omega_{2}+2\omega_{3}$ (multiplicity 2), and $-2\omega_{1}+2\omega_{2}-2\omega_{3}$ (multiplicity 1), one has 
\begin{equation} 
(s_{\alpha_{1}}s_{\alpha_{2}})\cdot (-\omega_{2}-2\omega_{3})=0,
\end{equation} 
\begin{equation} 
(s_{\alpha_{1}}s_{\alpha_{3}})\cdot (-2\omega_{1}+2\omega_{2}-2\omega_{3})=0.
\end{equation} 
Consequently, $R^{j}\text{ind}_{B_{\0}}^{G_{\0}} \Lambda^{2}(({\mathfrak g}_{\1}/{\mathfrak b}_{\1})^{*})=0$ for $j\neq 2$ and 
$R^{2}\text{ind}_{B_{\0}}^{G_{\0}} \Lambda^{2}(({\mathfrak g}_{\1}/{\mathfrak b}_{\1})^{*})\cong {\mathbb C}^{\oplus 3}$. 
By using duality this also holds for $\Lambda^{4}(({\mathfrak g}_{\1}/{\mathfrak b}_{\1})^{*})$. 

Finally, $w_{0}(-2\rho_{\0})=0$ and $l(w_{0})=6$, thus 
$R^{j}\text{ind}_{B_{\0}}^{G_{\0}} \Lambda^{6}(({\mathfrak g}_{\1}/{\mathfrak b}_{\1})^{*})=0$ for $j\neq 6$ and 
$R^{6}\text{ind}_{B_{\0}}^{G_{\0}} \Lambda^{6}(({\mathfrak g}_{\1}/{\mathfrak b}_{\1})^{*})\cong {\mathbb C}$. By gathering all this information, one can now 
conclude that 
$$p_{G,B}(t)=1+3t^{2}+3t^{4}+t^{6}=(1+t^{2})^{3}=z_{\mathfrak b,\mathfrak g}(t).$$

For ${\mathfrak g}={\mathfrak p}(4)$, one has $W_{\1}=\Sigma_{2}\ltimes ({\mathbb Z}_{2})^{2}$. The Poincar\'e polynomial 
$$p_{W_{\1}}(t)=\frac{(1-t^{2})(1-t^{4})}{(1-t)^{2}}=(1+t)(1+t+t^{2}+t^{3}).$$ 
From this, it is clear that $p_{G,B}(t)\neq p_{W_{\1}}(t^{r})$ for any $r\geq 1$, and ${\mathfrak b}$ is not a BBW parabolic. 

\subsection{} Given our computations for ${\mathfrak g}=\mathfrak{p}(n)$, we conclude this section with two open questions about the parabolic subalgebra ${\mathfrak b}$. 
\vskip .25cm 
\noindent
(5.5.1) Does $p_{G,B}(t)=z_{{\mathfrak b},{\mathfrak g}}(t)$? 
\vskip .25cm 
\noindent 
(5.5.2) Is there a natural subset of elements in $\Sigma_{n}$ that describes the grading on ${\mathbb C}[W_{\1}]_{\bullet}$ given by 
$z_{{\mathfrak b},{\mathfrak g}}(t)$?


\section{Comparing cohomology and supports for $({\mathfrak g},{\mathfrak g}_{\0})$, $({\mathfrak b},{\mathfrak b}_{\0})$ and $({\mathfrak f},{\mathfrak f}_{\0})$} 

{\em In the section assume that ${\mathfrak g}$ is a classical Lie superalgebra, ${\mathfrak b}$ is the  BBW parabolic subalgebra and ${\mathfrak f}$ is the 
detecting subalgebra as defined in Section~\ref{SS:parabolicdef} .} 

\subsection{} By using the finite generation of the cohomology ring $\operatorname{H}^{\bullet}({\mathfrak b},{\mathfrak b}_{\0},{\mathbb C})$, 
one can define two types of support varieties. Let ${\mathcal V}_{({\mathfrak b},{\mathfrak b}_{\0})}(M)$ be the variety associated to the annihilator of $\operatorname{H}^{\bullet}({\mathfrak b},{\mathfrak b}_{\0},{\mathbb C})$ 
on $\text{Ext}^{\bullet}_{({\mathfrak b},{\mathfrak b}_{\0})}(M,M)$. One has an injection of $\operatorname{H}^{\bullet}({\mathfrak g},{\mathfrak g}_{\0},{\mathbb C})\hookrightarrow \operatorname{H}^{\bullet}({\mathfrak b},{\mathfrak b}_{\0},{\mathbb C})$ such that 
$\operatorname{H}^{\bullet}({\mathfrak b},{\mathfrak b}_{\0},{\mathbb C})$ is finitely generated over $\operatorname{H}^{\bullet}({\mathfrak g},{\mathfrak g}_{\0},{\mathbb C})$. Set 
$\widehat{{\mathcal V}}_{({\mathfrak b},{\mathfrak b}_{\0})}(M)$ to be the variety associated to the annihilator of $\operatorname{H}^{\bullet}({\mathfrak g},{\mathfrak g}_{\0},{\mathbb C})$ on $\text{Ext}^{\bullet}_{({\mathfrak b},{\mathfrak b}_{\0})}(M,M)$. 

The following theorem compares the support varieties for cohomology in  
$({\mathfrak b},{\mathfrak b}_{\0})$, $({\mathfrak t},{\mathfrak t}_{\0})$ and $({\mathfrak f},{\mathfrak f}_{\0})$.

\begin{theorem} \label{T:btsupports} Let $M$ be a finite-dimensional ${\mathfrak b}$-module. 
\begin{itemize} 
\item[(a)] ${\mathcal V}_{({\mathfrak b},{\mathfrak b}_{\0})}(M)\cong {\mathcal V}_{({\mathfrak t},{\mathfrak t}_{\0})}(M)$. 
\item[(b)] $\widehat{{\mathcal V}}_{({\mathfrak b},{\mathfrak b}_{\0})}(M)\cong {\mathcal V}_{({\mathfrak t},{\mathfrak t}_{\0})}(M)/N$. 
\item[(c)] ${\mathcal V}_{({\mathfrak t},{\mathfrak t}_{\0})}(M)\cong {\mathcal V}_{({\mathfrak f},{\mathfrak f}_{\0})}(M)/T_{\0}$. 
\item[(d)] ${\mathcal V}_{({\mathfrak t},{\mathfrak t}_{\0})}(M)/N\cong {\mathcal V}_{({\mathfrak f},{\mathfrak f}_{\0})}(M)/N$. 
\end{itemize}
\end{theorem}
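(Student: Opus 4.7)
The plan is to establish the four isomorphisms in sequence, handling (a) and (c) by direct cochain-level comparisons, and deriving (b) and (d) as consequences via the natural $N$-action.

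For part (a), Theorem~\ref{t:b-tcoho}(b) already supplies an isomorphism of cohomology rings $R_{\mathfrak{b}} := \operatorname{H}^{\bullet}(\mathfrak{b},\mathfrak{b}_{\0},\mathbb{C}) \xrightarrow{\sim} R_{\mathfrak{t}} := \operatorname{H}^{\bullet}(\mathfrak{t},\mathfrak{t}_{\0},\mathbb{C})$ via restriction. To promote this to an isomorphism of support varieties, I would extend the argument of Theorem~\ref{t:b-tcoho}(b) from trivial coefficients to cochain complexes with coefficients in $M^{*}\otimes M$. The relative Ext groups are computed from $(S^{\bullet}(\mathfrak{b}^{*}_{\1}) \otimes M^{*}\otimes M)^{B_{\0}}$ and $(S^{\bullet}(\mathfrak{f}^{*}_{\1}) \otimes M^{*}\otimes M)^{T_{\0}}$, and the short exact sequence $0 \to \mathfrak{t}_{\1}^{*} \to \mathfrak{b}_{\1}^{*} \to \mathfrak{u}_{\1}^{*} \to 0$ of $B_{\0}$-modules from the proof of Theorem~\ref{t:b-tcoho}(b) induces a filtration whose associated graded involves $S^{i}(\mathfrak{u}^{*}_{\1}) \otimes S^{n-i}(\mathfrak{f}^{*}_{\1}) \otimes M^{*}\otimes M$. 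Extending the vanishing of $B_{\0}$-invariants used there to positive $i$ should force the associated spectral sequence to collapse onto the $i=0$ row, yielding a quasi-isomorphism of $R_{\mathfrak{b}}$-modules $\operatorname{Ext}^{\bullet}_{(\mathfrak{b},\mathfrak{b}_{\0})}(M,M) \cong \operatorname{Ext}^{\bullet}_{(\mathfrak{t},\mathfrak{t}_{\0})}(M,M)$. This transfers annihilator ideals under the ring identification and gives (a).

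For part (b), I plan to use Theorem~\ref{T:paraproperties} together with the explicit realizations $R_{\mathfrak{g}} \cong S^{\bullet}(\mathfrak{f}^{*}_{\1})^{N}$ and $R_{\mathfrak{b}} \cong S^{\bullet}(\mathfrak{f}^{*}_{\1})^{T_{\0}}$. Since $N_{0}$ is generated by $T_{\0}$ and $H$ and $H$ acts trivially on $\mathfrak{f}_{\1}$, the $N$-action on $R_{\mathfrak{b}}$ factors through $W_{\1} = N/N_{0}$ and $R_{\mathfrak{g}} = R_{\mathfrak{b}}^{W_{\1}}$. By definition, $\widehat{\mathcal{V}}_{(\mathfrak{b},\mathfrak{b}_{\0})}(M) = \operatorname{MaxSpec}(R_{\mathfrak{g}}/(J_{\mathfrak{b}} \cap R_{\mathfrak{g}}))$ is the image of $\mathcal{V}_{(\mathfrak{b},\mathfrak{b}_{\0})}(M)$ under the finite quotient map $\operatorname{MaxSpec}(R_{\mathfrak{b}}) \to \operatorname{MaxSpec}(R_{\mathfrak{g}}) = \operatorname{MaxSpec}(R_{\mathfrak{b}})/N$, which together with (a) gives $\widehat{\mathcal{V}}_{(\mathfrak{b},\mathfrak{b}_{\0})}(M) \cong \mathcal{V}_{(\mathfrak{t},\mathfrak{t}_{\0})}(M)/N$. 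For part (c), Theorem~\ref{t:b-tcoho}(a) gives $R_{\mathfrak{f}} = S^{\bullet}(\mathfrak{f}^{*}_{\1})$ and restriction $R_{\mathfrak{t}} \to R_{\mathfrak{f}}$ is the tautological inclusion $S^{\bullet}(\mathfrak{f}^{*}_{\1})^{T_{\0}} \hookrightarrow S^{\bullet}(\mathfrak{f}^{*}_{\1})$. Because $\mathfrak{t}_{\1} = \mathfrak{f}_{\1}$ and $[\mathfrak{f}_{\0},\mathfrak{f}_{\1}] = 0$, the cochain complex computing $\operatorname{Ext}^{\bullet}_{(\mathfrak{t},\mathfrak{t}_{\0})}(M,M)$ is the $T_{\0}$-invariants of the complex computing $\operatorname{Ext}^{\bullet}_{(\mathfrak{f},\mathfrak{f}_{\0})}(M,M)$; reductivity of $T_{\0}$ makes invariants exact, yielding $\operatorname{Ext}^{\bullet}_{(\mathfrak{t},\mathfrak{t}_{\0})}(M,M) \cong \operatorname{Ext}^{\bullet}_{(\mathfrak{f},\mathfrak{f}_{\0})}(M,M)^{T_{\0}}$. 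The argument of (b) then gives $\mathcal{V}_{(\mathfrak{t},\mathfrak{t}_{\0})}(M) \cong \mathcal{V}_{(\mathfrak{f},\mathfrak{f}_{\0})}(M)/T_{\0}$. Finally, (d) is immediate: since $T_{\0} \subseteq N$, quotienting both sides of (c) by $N$ yields $\mathcal{V}_{(\mathfrak{t},\mathfrak{t}_{\0})}(M)/N \cong (\mathcal{V}_{(\mathfrak{f},\mathfrak{f}_{\0})}(M)/T_{\0})/(N/T_{\0}) \cong \mathcal{V}_{(\mathfrak{f},\mathfrak{f}_{\0})}(M)/N$.

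The main obstacle is the cochain-level extension in part (a). The proof of Theorem~\ref{t:b-tcoho}(b) handles the trivial-coefficient case through a clean dimension count showing $\dim S^{n}(\mathfrak{b}^{*}_{\1})^{B_{\0}} \leq \dim S^{n}(\mathfrak{f}^{*}_{\1})^{T_{\0}}$ combined with an explicit isomorphism on $T_{\0}$-invariants. Twisting by $M^{*}\otimes M$ complicates the $B_{\0}$-invariant computation because $M$ carries nontrivial $B_{\0}$-weight data, and one must control cross-terms between $\mathfrak{u}_{\1}^{*}$-weights and weights in $M^{*}\otimes M$ under $B_{\0}$-invariance. An alternative route is to run the Hochschild--Serre spectral sequence for the ideal $\mathfrak{u} \triangleleft \mathfrak{b}$ with quotient $\mathfrak{t}$ and show that the higher $(\mathfrak{u},\mathfrak{u}_{\0})$-cohomology of $M^{*}\otimes M$ vanishes; this reduces the comparison to the cohomology rings already identified in Theorem~\ref{t:b-tcoho}(b).
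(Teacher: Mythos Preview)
Your arguments for (c) and (d) are essentially the paper's: the Lyndon--Hochschild--Serre collapse for $\mathfrak{f}\trianglelefteq\mathfrak{t}$ gives $\operatorname{H}^{\bullet}(\mathfrak{t},\mathfrak{t}_{\0},M')\cong\operatorname{H}^{\bullet}(\mathfrak{f},\mathfrak{f}_{\0},M')^{T_{\0}}$, hence injectivity of restriction, and then (d) follows by quotienting by $N$. Your deduction of (b) from (a) by passing to the $N$-quotient is also in the same spirit as the paper.

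The genuine gap is in (a). Neither of your two routes works. The first asks for an $R$-module isomorphism $\operatorname{Ext}^{\bullet}_{(\mathfrak{b},\mathfrak{b}_{\0})}(M,M)\cong\operatorname{Ext}^{\bullet}_{(\mathfrak{t},\mathfrak{t}_{\0})}(M,M)$; but once $M$ carries nonzero $\mathfrak{t}_{\0}$-weights, the weights of $\mathfrak{u}_{\1}^{*}$ can cancel against weights in $M^{*}\otimes M$, so $\bigl(S^{i}(\mathfrak{u}_{\1}^{*})\otimes M^{*}\otimes M\bigr)^{B_{\0}}$ need not vanish for $i>0$ and the dimension count from Theorem~\ref{t:b-tcoho}(b) breaks down. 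Your fallback --- vanishing of $\operatorname{H}^{>0}(\mathfrak{u},\mathfrak{u}_{\0},M^{*}\otimes M)$ --- is not true for a general $\mathfrak{b}$-module and you give no mechanism for it. The paper does not attempt to match the Ext groups at all; only the varieties are compared.

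The paper's key idea, which your proposal is missing, is to \emph{filter} $M$ by $\mathfrak{t}_{\0}$-weight spaces. One obtains a $\mathfrak{b}$-stable filtration with subquotients $M_{\lambda}$ on which $\mathfrak{u}$ acts trivially. Standard short-exact-sequence manipulations give $\mathcal{V}_{(\mathfrak{b},\mathfrak{b}_{\0})}(M)\subseteq\bigcup_{\lambda}\mathcal{V}_{(\mathfrak{b},\mathfrak{b}_{\0})}(M_{\lambda})$. Because $\mathfrak{u}$ acts trivially on each $M_{\lambda}$, the LHS spectral sequence for $\mathfrak{u}\trianglelefteq\mathfrak{b}$ now has the clean form
\[
E_{2}^{i,j}=\operatorname{Ext}^{i}_{(\mathfrak{t},\mathfrak{t}_{\0})}\bigl(\mathbb{C},\operatorname{Ext}^{j}_{(\mathfrak{u},\mathfrak{u}_{\0})}(\mathbb{C},\mathbb{C})\otimes M_{\lambda}^{*}\otimes M_{\lambda}\bigr)\Rightarrow\operatorname{Ext}^{i+j}_{(\mathfrak{b},\mathfrak{b}_{\0})}(M_{\lambda},M_{\lambda}),
\]
and the $R$-action along the rows yields $\mathcal{V}_{(\mathfrak{b},\mathfrak{b}_{\0})}(M_{\lambda})\subseteq\mathcal{V}_{(\mathfrak{t},\mathfrak{t}_{\0})}(M_{\lambda})$. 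Combining with $\mathcal{V}_{(\mathfrak{t},\mathfrak{t}_{\0})}(M)=\bigcup_{\lambda}\mathcal{V}_{(\mathfrak{t},\mathfrak{t}_{\0})}(M_{\lambda})$ closes the loop. The reduction to modules with trivial $\mathfrak{u}$-action is precisely what circumvents the obstacle you identified.
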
 

\begin{proof} (a) First observe that  by Theorem~\ref{t:b-tcoho}(b), the restriction map 
$\operatorname{H}^{\bullet}({\mathfrak b},{\mathfrak b}_{\0},{\mathbb C})\rightarrow 
\operatorname{H}^{\bullet}({\mathfrak t},{\mathfrak t}_{\0},{\mathbb C})$ is an isomorphism. 
Therefore, ${\mathcal V}_{({\mathfrak t},{\mathfrak t}_{\0})}(M)\subseteq {\mathcal V}_{({\mathfrak b},{\mathfrak b}_{\0})}(M)$ 
(cf. argument in \cite[Section 6.1]{BKN1}).  

Let $M=\oplus_{\lambda\in {\mathfrak t}_{\0}^{*}} M_{\lambda}$ be a weight space decomposition 
of $M$. Note that each $M_{\lambda}$ is a ${\mathfrak t}$-module. Next observe one can construct a ${\mathfrak b}$-stable filtration of $M$: 
$$M:=M_{0}\supseteq M_{1}\supseteq M_{2} \supseteq \dots \supseteq M_{s} \supseteq \{0\}$$ 
such that $M_{i}/M_{i+1}\cong M_{\lambda_{i}}$ for some $\lambda_{i}\in {\mathfrak t}_{\0}^{*}$. 

The filtration above provides a short exact sequence $0\rightarrow M_{s} \rightarrow M \rightarrow M/M_{s} \rightarrow 0$. One can then use the long exact sequence in cohomology to show that 
$${\mathcal V}_{({\mathfrak b},{\mathfrak b}_{\0})}(M,M^{\prime})\subseteq {\mathcal V}_{({\mathfrak b},{\mathfrak b}_{\0})}(M_{s},M^{\prime})\cup {\mathcal V}_{({\mathfrak b},{\mathfrak b}_{\0})}(M/M_{s},M^{\prime})$$ 
for all finite-dimensional ${\mathfrak b}$-modules $N$. Specializing $M=M^{\prime}$, one obtains 
$${\mathcal V}_{({\mathfrak b},{\mathfrak b}_{\0})}(M)\subseteq {\mathcal V}_{({\mathfrak b},{\mathfrak b}_{\0})}(M_{s})\cup {\mathcal V}_{({\mathfrak b},{\mathfrak b}_{\0})}(M/M_{s}).$$ 
Applying this procedure inductively yields 
\begin{equation} 
{\mathcal V}_{({\mathfrak b},{\mathfrak b}_{\0})}(M)\subseteq \bigcup_{\lambda\in t_{\0}^{*}} {\mathcal V}_{({\mathfrak b},{\mathfrak b}_{\0})}(M_{\lambda}).
\end{equation} 
Here $M_{\lambda}$ is regarded as ${\mathfrak b}$-module with trivial ${\mathfrak u}$-action. 

Next apply the LHS spectral sequence for $M_{\lambda}$: 
$$E_{2}^{i,j}=\operatorname{Ext}^{i}_{({\mathfrak t},{\mathfrak t}_{\0})}({\mathbb C},\operatorname{Ext}^{j}_{({\mathfrak u},{\mathfrak u}_{\0})}({\mathbb C},{\mathbb C})\otimes 
M_{\lambda}^{*}\otimes M_{\lambda})\Rightarrow \operatorname{Ext}^{i+j}_{({\mathfrak b},{\mathfrak b}_{\0})}(M_{\lambda},M_{\lambda}).$$ 
By using the identification of 
$$R:=\operatorname{H}^{\bullet}({\mathfrak b},{\mathfrak b}_{\0},{\mathbb C})\cong \operatorname{H}^{\bullet}({\mathfrak t},{\mathfrak t}_{\0},{\mathbb C})\cong S^{\bullet}({\mathfrak t}^{*}_{\1})^{T_{\0}}$$
one has that $R$ acts on the rows of $E_{2}$ and the abutment. It follows that 
$${\mathcal V}_{({\mathfrak b},{\mathfrak b}_{\0})}(M_{\lambda})\subseteq {\mathcal V}_{({\mathfrak t},{\mathfrak t}_{\0})}(M_{\lambda}).$$ 
Since ${\mathcal V}_{({\mathfrak t},{\mathfrak t}_{\0})}(M)=\cup_{\lambda\in {\mathfrak t}_{\0}^{*}} {\mathcal V}_{({\mathfrak t},{\mathfrak t}_{\0})}(M_{\lambda})$, 
one has ${\mathcal V}_{({\mathfrak b},{\mathfrak b}_{\0})}(M)\subseteq {\mathcal V}_{({\mathfrak t},{\mathfrak t}_{\0})}(M)$. 

(b) The result can be obtained using the argument given in (a) and replacing (i) $\operatorname{H}^{\bullet}({\mathfrak b},{\mathfrak b}_{\0},{\mathbb C})\rightarrow 
\operatorname{H}^{\bullet}({\mathfrak t},{\mathfrak t}_{\0},{\mathbb C})$ by $\operatorname{H}^{\bullet}({\mathfrak g},{\mathfrak g}_{\0}, {\mathbb C}) \rightarrow 
\operatorname{H}^{\bullet}({\mathfrak t},{\mathfrak t}_{\0},{\mathbb C})^{N}$, (ii) ${\mathcal V}_{({\mathfrak t},{\mathfrak t}_{\0})}(-)$ by 
${\mathcal V}_{({\mathfrak t},{\mathfrak t}_{\0})}(-)/N$, and (iii) ${\mathcal V}_{({\mathfrak b},{\mathfrak b}_{\0})}(-)$ by $\widehat{{\mathcal V}}_{({\mathfrak b},{\mathfrak b}_{\0})}(-)$. 

(c) We have ${\mathfrak f}\unlhd {\mathfrak t}$, so one can apply the Lyndon-Hochschild-Serre spectral sequence for relative cohomology 
$$E_{2}^{i,j}=\operatorname{H}^{i}({\mathfrak t}/{\mathfrak f},{\mathfrak t}_{\0}/{\mathfrak f}_{\0},\operatorname{H}^{j}({\mathfrak f},{\mathfrak f}_{\0},M^{\prime}))
\Rightarrow \operatorname{H}^{i+j}({\mathfrak t},{\mathfrak t}_{\0},M^{\prime})
$$ 
for any ${\mathfrak t}$-module $M^{\prime}$. The spectral sequence collapses (${\mathfrak t}/{\mathfrak t}_{\0}\cong {\mathfrak f}/{\mathfrak f}_{\0}$) and yields: 
\begin{equation} 
\operatorname{H}^{\bullet}({\mathfrak t},{\mathfrak t}_{\0},M^{\prime})\cong \operatorname{H}^{\bullet}({\mathfrak f},{\mathfrak f}_{\0},M^{\prime})^{T_{\0}}.
\end{equation}
This proves that the restriction map: $\operatorname{H}^{\bullet}({\mathfrak t},{\mathfrak t}_{\0},M^{\prime})\hookrightarrow \operatorname{H}^{\bullet}({\mathfrak f},{\mathfrak f}_{\0},M^{\prime})$ 
is an injective map, so by \cite[Theorem 4.4.1]{LNZ}, ${\mathcal V}_{({\mathfrak t},{\mathfrak t}_{\0})}(M)\cong {\mathcal V}_{({\mathfrak f},{\mathfrak f}_{\0})}(M)/T_{\0}$. 


(d) One can obtain this part by using (c) and taking quotients with $N$. \end{proof} 


\subsection{Geometric Induction and Spectral Sequences} Let $G$ (resp. $B$) be the supergroup (scheme) such that $\text{Lie }G={\mathfrak g}$ 
(resp. $\text{Lie }B={\mathfrak b}$).  If $M$ is a $G$-module (resp. $B$-module) then one can consider $M$ as a ${\mathfrak g}$-module (resp. ${\mathfrak b}$-module)
by differentiation. The following results provides a spectral sequence that relates the relative cohomology for ${\mathfrak g}$ and ${\mathfrak b}$ via the 
higher right derived functors of $\text{ind}_{B}^{G}(-)$. 

\begin{proposition}\label{P:spectralseq} Let $M_{1}$ be a $G$-module and $M_{2}$ be a $B$-module. Then there exists a first quadrant spectral sequence. 
$$E_{2}^{i,j}=\operatorname{Ext}^{i}_{({\mathfrak g},{\mathfrak g}_{\0})} (M_{1},R^{j}\operatorname{ind}_{B}^{G} M_{2})\Rightarrow 
\operatorname{Ext}_{({\mathfrak b},{\mathfrak b}_{\0})}^{i+j}(M_{1},M_{2}). $$ 
\end{proposition}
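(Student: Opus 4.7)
The plan is to realize the stated spectral sequence as a Grothendieck spectral sequence for the composition of two left-exact functors. Set $F=\operatorname{ind}_B^G$, viewed as a functor from $B$-modules to $G$-modules with right-derived functors $R^j\operatorname{ind}_B^G$, and let $H=\operatorname{Hom}_{({\mathfrak g},{\mathfrak g}_{\0})}(M_1,-)$, viewed as a left-exact functor from $G$-modules to vector spaces whose right-derived functors in the relative category of $({\mathfrak g},{\mathfrak g}_{\0})$-modules are the relative Ext groups $\operatorname{Ext}^i_{({\mathfrak g},{\mathfrak g}_{\0})}(M_1,-)$.

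The first step is Frobenius reciprocity: for any $G$-module $M_1$ and any $B$-module $M_2$, I want to establish a natural isomorphism
$$\operatorname{Hom}_{({\mathfrak g},{\mathfrak g}_{\0})}(M_1,\operatorname{ind}_B^G M_2)\cong \operatorname{Hom}_{({\mathfrak b},{\mathfrak b}_{\0})}(M_1,M_2),$$
which follows from the classical adjunction between restriction and induction for supergroup schemes (cf.~\cite{BruKl}, \cite[Section~2]{Bru}), together with the fact that the underlying even category of rational modules is the one in which both sides live. Consequently $H\circ F\cong \operatorname{Hom}_{({\mathfrak b},{\mathfrak b}_{\0})}(M_1,-)$, whose right-derived functors are precisely the desired abutment $\operatorname{Ext}^\bullet_{({\mathfrak b},{\mathfrak b}_{\0})}(M_1,-)$.

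The second step is to verify that $F=\operatorname{ind}_B^G$ sends relatively injective $B$-modules to $H$-acyclic $G$-modules. This is the Grothendieck hypothesis needed to produce the spectral sequence. For this I would invoke Proposition~\ref{P:Gzeroiso} to identify $R^j\operatorname{ind}_B^G I$ on restriction to $G_{\0}$ with $R^j\operatorname{ind}_{B_0}^{G_0}[I\otimes \Lambda^{\bullet}(({\mathfrak g}_{\1}/{\mathfrak b}_{\1})^{*})]$; when $I$ is relatively injective (i.e.\ injective, equivalently projective, as a ${\mathfrak b}_{\0}$-module), the tensor identity together with the fact that a relatively injective $B$-module remains relatively injective after tensoring with a finite-dimensional module reduces the acyclicity question to the corresponding statement for the classical reductive pair $(G_0,B_0)$, which is standard.

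With these two inputs in hand, the Grothendieck spectral sequence for the composition $H\circ F$, obtained from a Cartan--Eilenberg resolution of an injective resolution $M_2\to I^{\bullet}$ in $B$-modules, delivers
$$E_2^{i,j}=R^i H(R^jF(M_2))=\operatorname{Ext}^i_{({\mathfrak g},{\mathfrak g}_{\0})}(M_1,R^j\operatorname{ind}_B^G M_2)\Longrightarrow R^{i+j}(H\circ F)(M_2)\cong \operatorname{Ext}^{i+j}_{({\mathfrak b},{\mathfrak b}_{\0})}(M_1,M_2),$$
with first-quadrant grading because both $F$ and $H$ are left-exact and $R^jF$, $R^iH$ vanish in negative degrees. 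The main obstacle is the acyclicity verification: relative injectivity is a more delicate condition than ordinary injectivity, and the passage through Proposition~\ref{P:Gzeroiso} must be carried out carefully to guarantee that the twist by $\Lambda^{\bullet}(({\mathfrak g}_{\1}/{\mathfrak b}_{\1})^{*})$ does not disturb the $B_0$-injectivity needed to invoke the classical vanishing on the even part.
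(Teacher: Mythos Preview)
Your overall architecture is exactly the paper's: realize the spectral sequence as the Grothendieck spectral sequence for the composite $H\circ F$ with $F=\operatorname{ind}_B^G$ and $H=\operatorname{Hom}_{(\mathfrak{g},\mathfrak{g}_{\0})}(M_1,-)$, and identify $H\circ F$ with $\operatorname{Hom}_{(\mathfrak{b},\mathfrak{b}_{\0})}(M_1,-)$ by Frobenius reciprocity. That part is fine.

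The acyclicity step, however, is where your argument breaks down, and the paper handles it very differently and much more simply. First, your parenthetical characterization of relative injectivity is not right: a relatively injective object in $\mathcal{C}_{(\mathfrak{b},\mathfrak{b}_{\0})}$ is not ``a module that is injective (equivalently projective) over $\mathfrak{b}_{\0}$''; it is a direct summand of some $\operatorname{ind}_{B_{\0}}^{B}N$. Second, what you outline is a computation of $R^{j}\operatorname{ind}_B^G I$ via Proposition~\ref{P:Gzeroiso}; but vanishing of $R^{j}F(I)$ for $j>0$ is $F$-acyclicity of $I$, not $H$-acyclicity of $F(I)$, and it is the latter that Grothendieck's theorem requires. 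Your route through $\Lambda^{\bullet}((\mathfrak{g}_{\1}/\mathfrak{b}_{\1})^{*})$ does not produce a statement about $\operatorname{Ext}^{i}_{(\mathfrak{g},\mathfrak{g}_{\0})}(M_1,F(I))$.

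The paper's acyclicity argument avoids all of this with a one-line use of transitivity of induction: for a relative injective $I$, take $I$ to be a summand of $\operatorname{ind}_{B_{\0}}^{B}N$; then
\[
\operatorname{ind}_B^G\bigl(\operatorname{ind}_{B_{\0}}^{B}N\bigr)\;\cong\;\operatorname{ind}_{B_{\0}}^{G}N\;\cong\;\operatorname{ind}_{G_{\0}}^{G}\bigl(\operatorname{ind}_{B_{\0}}^{G_{\0}}N\bigr),
\]
which is of the form $\operatorname{ind}_{G_{\0}}^{G}(-)$ and hence injective in $\mathcal{C}_{(\mathfrak{g},\mathfrak{g}_{\0})}$. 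Thus $F$ carries injectives to injectives (a fortiori to $H$-acyclics), and \cite[I.4.1]{Jan} applies. Replace your second step with this argument and the proof is complete.
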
 

\begin{proof} The spectral sequence is constructed via a composition of functors. Let ${\mathcal F}_{1}(-)=\text{Hom}_{({\mathfrak g},{\mathfrak g}_{\0})}(M,-)$ and 
${\mathcal F}_{2}(-)=\text{ind}_{B}^{G}(-)$. We are regarding ${\mathcal F}_{1}$ (resp. ${\mathcal F}_{2}$) on the relative category ${\mathcal C}_{({\mathfrak g},{\mathfrak g}_{\0})}$ 
(resp. ${\mathcal C}_{({\mathfrak b},{\mathfrak b}_{\0})}$) where the injective objects are relatively projective over $U({\mathfrak g}_{\0})$ (resp. $U({\mathfrak b}_{\0})$). 

The functors ${\mathcal F}_{1}$ and ${\mathcal F}_{2}$ are left exact. Furthermore, an injective object in ${\mathcal C}_{({\mathfrak b},{\mathfrak b}_{\0})}$ is 
a direct summand of $\text{ind}_{B_{\0}}^{B} N$ for some $B_{\0}$-module $N$. Observe that 
$${\mathcal F}_{2}(\text{ind}_{B_{\0}}^{B}N)\cong \text{ind}_{B}^{G}[\text{ind}_{B_{\0}}^{B} N]\cong \text{ind}_{B_{\0}}^{G}N=\text{ind}_{G_{\0}}^{G}[\text{ind}_{B_{\0}}^{G_{\0}} N].$$ 
Therefore, ${\mathcal F}_{2}(\text{ind}_{B_{\0}}^{B}N)$ is an injective module in ${\mathcal C}_{({\mathfrak g},{\mathfrak g}_{\0})}$. It follows 
that injective objects in  ${\mathcal C}_{({\mathfrak b},{\mathfrak b}_{\0})}$ are taken to objects acyclic for ${\mathcal F}_{1}$. 
Finally, observe that 
$${\mathcal F}_{1}\circ {\mathcal F}_{2}(-)=\text{Hom}_{({\mathfrak g},{\mathfrak g}_{\0})}(M,\text{ind}_{B}^{G}(-))\cong 
\text{Hom}_{({\mathfrak b},{\mathfrak b}_{\0})}(M,-).$$
The existence of the spectral sequence now follows by \cite[I. 4.1 Proposition]{Jan}. 
\end{proof}

\subsection{Restricting relative $U({\mathfrak g}_{\0})$-injectives to $U({\mathfrak b})$} We can use the 
spectral sequence to investigate what happens when an relative injective $U({\mathfrak g}_{\0})$-module restricts to ${\mathfrak b}$. 

\begin{theorem} Let $I$ be a ${\mathfrak g}$-module that is a relatively injective $U({\mathfrak g}_{\0})$-module and $M$ be any finite-dimensional ${\mathfrak g}$-module. Then 
\begin{itemize} 
\item[(a)] $\operatorname{Ext}^{j}_{({\mathfrak b},{\mathfrak b}_{\0})}(M,I)\cong \operatorname{Hom}_{({\mathfrak g},{\mathfrak g}_{\0})}(M,
[R^{j}\operatorname{ind}_{B}^{G}{\mathbb C}] \otimes I)$
\item[(b)] $\operatorname{Ext}^{j}_{({\mathfrak b},{\mathfrak b}_{\0})}(M,I)=0$ for $j>\dim G_{\0}/B_{\0}$. 
\end{itemize} 
\end{theorem}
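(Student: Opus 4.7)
The plan is to apply the spectral sequence from Proposition~\ref{P:spectralseq} with $M_{1}=M$ and $M_{2}=I$ (restricted to $B$), and then show that this spectral sequence collapses at the $E_{2}$-page onto its edge row. First I would invoke the tensor identity: since $I$ is a $G$-module, restriction along $B\hookrightarrow G$ gives a $B$-isomorphism $I|_{B}\cong \mathbb{C}\otimes I|_{B}$, and the projection formula for $\operatorname{ind}_{B}^{G}$ (which passes to the higher derived functors by a standard resolution argument) yields $R^{j}\operatorname{ind}_{B}^{G}(I|_{B})\cong [R^{j}\operatorname{ind}_{B}^{G}\mathbb{C}]\otimes I$ as $G$-modules. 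Substituting into the spectral sequence of Proposition~\ref{P:spectralseq} gives
\begin{equation*}
E_{2}^{i,j}=\operatorname{Ext}^{i}_{(\mathfrak{g},\mathfrak{g}_{\0})}\bigl(M,\,[R^{j}\operatorname{ind}_{B}^{G}\mathbb{C}]\otimes I\bigr)\Rightarrow \operatorname{Ext}^{i+j}_{(\mathfrak{b},\mathfrak{b}_{\0})}(M,I).
\end{equation*}

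Next I would establish the collapse. By Proposition~\ref{P:Gzeroiso}, $R^{j}\operatorname{ind}_{B}^{G}\mathbb{C}$ is finite-dimensional (as a $G_{\0}$-module it is a subquotient of $R^{j}\operatorname{ind}_{B_{\0}}^{G_{\0}}\Lambda^{\bullet}((\mathfrak{g}_{\1}/\mathfrak{b}_{\1})^{*})$, which is finite-dimensional because $G_{\0}/B_{\0}$ is projective and the argument is a finite-dimensional $B_{\0}$-module); then Proposition~\ref{P:selfext} lifts this to a finite-dimensional $G$-module. Since $I$ is relatively injective over $U(\mathfrak{g}_{\0})$ and tensoring with a finite-dimensional module preserves relative injectivity (one uses adjunction $\operatorname{Ext}^{i}_{(\mathfrak{g},\mathfrak{g}_{\0})}(N,I\otimes V)\cong \operatorname{Ext}^{i}_{(\mathfrak{g},\mathfrak{g}_{\0})}(N\otimes V^{*},I)$ for finite-dimensional $V$, which equals zero for $i>0$), the tensor product $[R^{j}\operatorname{ind}_{B}^{G}\mathbb{C}]\otimes I$ is also relatively injective over $U(\mathfrak{g}_{\0})$. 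Hence $E_{2}^{i,j}=0$ for $i>0$, the spectral sequence collapses on the $i=0$ column, and we obtain the isomorphism of part (a).

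Part (b) then follows formally. Using Proposition~\ref{P:Gzeroiso} once more, $R^{j}\operatorname{ind}_{B}^{G}\mathbb{C}$ restricted to $G_{\0}$ is identified with $R^{j}\operatorname{ind}_{B_{\0}}^{G_{\0}}\Lambda^{\bullet}((\mathfrak{g}_{\1}/\mathfrak{b}_{\1})^{*})$, which vanishes for $j>\dim G_{\0}/B_{\0}$ by Grothendieck (or Serre) vanishing on the projective flag variety $G_{\0}/B_{\0}$. Hence $R^{j}\operatorname{ind}_{B}^{G}\mathbb{C}=0$ in that range, and the formula of (a) immediately forces $\operatorname{Ext}^{j}_{(\mathfrak{b},\mathfrak{b}_{\0})}(M,I)=0$.

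The main technical hurdle is justifying the tensor identity at the level of derived induction functors for a supergroup scheme: the statement for $\operatorname{ind}_{B}^{G}$ is standard, but passing to $R^{j}\operatorname{ind}_{B}^{G}$ requires either a direct spectral sequence argument or the fact (from \cite{BruKl,Bru}) that an injective resolution of $\mathbb{C}$ in $\operatorname{Mod}(B)$ tensored with the finite-dimensional $G$-module $I$ remains an injective resolution of $I|_{B}$. Once this is in hand, the remaining steps are formal bookkeeping with the spectral sequence.
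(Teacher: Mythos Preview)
Your proposal is correct and follows essentially the same approach as the paper: apply the spectral sequence of Proposition~\ref{P:spectralseq}, use the tensor identity to rewrite $R^{j}\operatorname{ind}_{B}^{G}I$, observe that relative injectivity of $I$ forces $E_{2}^{i,j}=0$ for $i>0$, and deduce (b) from vanishing of $R^{j}\operatorname{ind}_{B}^{G}\mathbb{C}$ via Proposition~\ref{P:Gzeroiso}. One small remark: your appeal to Proposition~\ref{P:selfext} is unnecessary, since $R^{j}\operatorname{ind}_{B}^{G}\mathbb{C}$ is already a $G$-module by construction and its finite-dimensionality is immediate from its $G_{\0}$-restriction.
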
 

\begin{proof} One can apply the spectral sequence given in Proposition~\ref{P:spectralseq}:
\begin{equation} 
E_{2}^{i,j}=\text{Ext}^{i}_{({\mathfrak g},{\mathfrak g}_{\0})}(M,[R^{j}\operatorname{ind}_{B}^{G}{\mathbb C}] \otimes I)\Rightarrow 
\operatorname{Ext}^{i+j}_{({\mathfrak b},{\mathfrak b}_{\0})}(M,I).
\end{equation}

Since $I$ is injective the spectral sequence collapses and yields (a). For part (b), one has $R^{j}\operatorname{ind}_{B}^{G}{\mathbb C}=0$
for $j\geq \dim G_{\0}/B_{\0}$ by Proposition~\ref{P:Gzeroiso}. 
\end{proof}

The result above shows that $I$ restricted to ${\mathfrak b}$ need not be a relatively injective $U({\mathfrak b}_{\0})$-module. 
However the result does show that if $I$ is a relatively injective $U({\mathfrak g}_{\0})$-module then 
$\{0\}={\mathcal V}_{({\mathfrak g},{\mathfrak g}_{\0})}(I)={\mathcal V}_{({\mathfrak b},{\mathfrak b}_{\0})}(I)$. 

\subsection{Collapsing of the Spectral Sequence} The next result shows that the spectral sequence given in Proposition~\ref{P:spectralseq} collapses when 
$M={\mathbb C}$ and ${\mathfrak b}$ is a BBW parabolic subalgebra. 

\begin{theorem} \label{T:sscollapse} Let ${\mathfrak g}$ be a classical simple Lie superalgebra with ${\mathfrak g}\neq P(n)$. Then 
the following spectral sequence collapses:  
\begin{equation} 
E_{2}^{i,j}=\operatorname{Ext}^{i}_{({\mathfrak g},{\mathfrak g}_{\0})}({\mathbb C},[R^{j}\operatorname{ind}_{B}^{G}{\mathbb C}])\Rightarrow 
\operatorname{Ext}^{i+j}_{({\mathfrak b},{\mathfrak b}_{\0})}({\mathbb C},{\mathbb C}). 
\end{equation} 
\end{theorem}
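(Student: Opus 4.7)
The plan is to force the collapse by a dimension count, using the Poincar\'e polynomial identity established in Theorem~\ref{T:Poincareseriesequal} together with the structural fact that each $R^{j}\operatorname{ind}_{B}^{G}{\mathbb C}$ is a trivial $G$-module.

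First I would invoke Proposition~\ref{P:selfext}, together with the explicit computations in Section~\ref{SS:TypeQ}, Section~\ref{SS:TypeABC}, Section~\ref{SS:TypeAosp}, and Theorem~\ref{T:exceptionalsheaf}, to conclude that for every $j\ge 0$ there is a nonnegative integer $t_{j}$ with $R^{j}\operatorname{ind}_{B}^{G}{\mathbb C}\cong {\mathbb C}^{\oplus t_{j}}$ as a $G$-module. (This is exactly the input that excludes ${\mathfrak p}(n)$.) Substituting into the $E_{2}$-page, and using that $\operatorname{Ext}^{\bullet}_{(\g,\g_{\0})}({\mathbb C},-)$ commutes with finite direct sums, yields
\begin{equation*}
E_{2}^{i,j}\cong \operatorname{H}^{i}(\g,\g_{\0},{\mathbb C})^{\oplus t_{j}},
\end{equation*}
so $\dim E_{2}^{i,j}=\dim\operatorname{H}^{i}(\g,\g_{\0},{\mathbb C})\cdot t_{j}$.

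Next I would compare Poincar\'e polynomials. The total dimension in degree $n$ of the $E_{2}$-page equals the coefficient of $t^{n}$ in $p_{\g}(t)\cdot p_{G,B}(t)$, while the dimension of the abutment in degree $n$ is the coefficient of $t^{n}$ in $p_{\b}(t)$. Since the spectral sequence is first-quadrant, one always has the inequality
\begin{equation*}
\dim\operatorname{Ext}^{n}_{(\b,\b_{\0})}({\mathbb C},{\mathbb C})\ \le\ \sum_{i+j=n}\dim E_{2}^{i,j},
\end{equation*}
with equality for every $n$ if and only if all higher differentials vanish, i.e.\ the spectral sequence collapses at $E_{2}$. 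Thus it suffices to prove the equality of Poincar\'e series $p_{\b}(t)=p_{\g}(t)\cdot p_{G,B}(t)$.

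Finally, this equality is exactly the content of Theorem~\ref{T:Poincareseriesequal}: by definition $z_{{\mathfrak b},{\mathfrak g}}(t)=p_{\mathfrak b}(t)/p_{\mathfrak g}(t)$, and the theorem gives $p_{G,B}(t)=z_{{\mathfrak b},{\mathfrak g}}(t)$ for all classical simple Lie superalgebras other than $P(n)$. Rearranging yields $p_{\mathfrak b}(t)=p_{\mathfrak g}(t)\cdot p_{G,B}(t)$, which forces the dimension inequality above to be an equality in every degree, and hence forces the spectral sequence to collapse. The main obstacle, and the substantial work that makes the argument go through, is already absorbed into Proposition~\ref{P:selfext} (to identify the $G$-structure of $R^{j}\operatorname{ind}_{B}^{G}{\mathbb C}$) and into Theorem~\ref{T:Poincareseriesequal} (the case-by-case verification of the numerical identity); once both are in hand, the collapse is a clean consequence of first-quadrant bookkeeping.
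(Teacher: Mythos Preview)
Your argument is correct and follows essentially the same route as the paper: both identify $R^{j}\operatorname{ind}_{B}^{G}{\mathbb C}$ as a direct sum of trivial modules, rewrite $E_{2}^{i,j}$ as $\operatorname{H}^{i}(\g,\g_{\0},{\mathbb C})^{\oplus t_{j}}$, and then force the collapse by the dimension count $p_{\mathfrak b}(t)=p_{\mathfrak g}(t)\cdot p_{G,B}(t)$ supplied by Theorem~\ref{T:Poincareseriesequal}. The only cosmetic difference is that the paper routes the abutment side through Theorem~\ref{T:paraproperties}(b) (the decomposition $\operatorname{H}^{\bullet}(\b,\b_{\0},{\mathbb C})\cong \operatorname{H}^{\bullet}(\g,\g_{\0},{\mathbb C})\otimes {\mathbb C}[W_{\1}]_{\bullet}$), whereas you go directly via the definition $z_{\b,\g}(t)=p_{\b}(t)/p_{\g}(t)$; these are equivalent.
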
 

\begin{proof} It suffices to show that 
\begin{equation} \label{e:sscollapsecondition} 
\sum_{i+j=n} \dim E_{2}^{i,j}=\dim \operatorname{H}^{n}({\mathfrak b},{\mathfrak b}_{\0},{\mathbb C})
\end{equation} 
for all $n\geq 0$. This will insure that the differentials $d_{r}$ are zero for $r\geq 2$. 

Since $R^{j}\text{ind}_{B}^{G}{\mathbb C}\cong {\mathbb C}^{\oplus m_{j}}$ by Theorem~\ref{T:Poincareseriesequal}(a), one has 
\begin{equation} \label{e:e2} 
\bigoplus_{i+j=n} E_{2}^{i,j}\cong \bigoplus_{i+j=n} \operatorname{H}^{i}({\mathfrak g},{\mathfrak g}_{\0},{\mathbb C})\otimes R^{j}\text{ind}_{B}^{G}{\mathbb C}
 \end{equation} 
 for all $n\geq 0$. 

Now by Theorem~\ref{T:Poincareseriesequal}(b), $p_{\mathfrak b}(t)=p_{\mathfrak g}(t)\cdot p_{G,B}(t)$. Therefore, by comparing coefficients of $t^{n}$, one can conclude 
that (\ref{e:sscollapsecondition}) holds. 

\end{proof}

\subsection{} We can now give conditions via the collapsing of the spectral sequence in Proposition~\ref{P:spectralseq} for $M_{1}\cong {\mathbb C}$ and 
$M_{2}\cong {\mathbb C}$ to insure that $\widehat{\mathcal V}_{({\mathfrak b},{\mathfrak b}_{\0})}(M) \cong {\mathcal V}_{({\mathfrak g},{\mathfrak g}_{\0})}(M)$.

\begin{theorem} \label{T:gbsupports} Let $M$ a finite-dimensional 
${\mathfrak g}$-module. Suppose that 
\begin{itemize} 
\item[(a)] $R^{j}\operatorname{ind}_{B}^{G} {\mathbb C}\cong {\mathbb C}^{\oplus m_{j}}$ for $j>0$.
\item[(b)] The spectral sequence 
\begin{equation} 
E_{2}^{i,j}=\operatorname{Ext}^{i}_{({\mathfrak g},{\mathfrak g}_{\0})}({\mathbb C},[R^{j}\operatorname{ind}_{B}^{G}{\mathbb C}])\Rightarrow 
\operatorname{Ext}^{i+j}_{({\mathfrak b},{\mathfrak b}_{\0})}({\mathbb C},{\mathbb C})
\end{equation}
collapses and yields an isomorphism of 
$R=\operatorname{H}^{\bullet}({\mathfrak g},{\mathfrak g}_{\0},{\mathbb C})=S^{\bullet}({\mathfrak g}_{\1}^{*})^{G_{\0}}$-modules. 
\end{itemize} 
Then $\operatorname{res}^*:\widehat{{\mathcal V}}_{({\mathfrak b},{\mathfrak b}_{\0})}(M) \rightarrow {\mathcal V}_{({\mathfrak g},{\mathfrak g}_{\0})}(M)$ 
is an isomorphism.
\end{theorem}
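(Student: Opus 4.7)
The plan is to apply Proposition~\ref{P:spectralseq} with $M_{1}=M_{2}=M$, bootstrap the collapse hypothesis~(b) (for ${\mathbb C}$) to a collapse for $M$ using the multiplicative module structure of the spectral sequence, and then read off equality of annihilator ideals from the resulting graded module.

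First I set up the spectral sequence. By Proposition~\ref{P:spectralseq} applied with $M_{1}=M_{2}=M$ (viewed as a $G$-module and, for $M_{2}$, restricted to $B$), one obtains a convergent first-quadrant spectral sequence
$$E_{2}^{i,j}(M)=\operatorname{Ext}^{i}_{({\mathfrak g},{\mathfrak g}_{\0})}(M,R^{j}\operatorname{ind}_{B}^{G}M)\Rightarrow \operatorname{Ext}^{i+j}_{({\mathfrak b},{\mathfrak b}_{\0})}(M,M).$$
Since $M$ extends to a $G$-module, the tensor identity (projection formula) combined with hypothesis~(a) gives $R^{j}\operatorname{ind}_{B}^{G} M\cong M\otimes R^{j}\operatorname{ind}_{B}^{G}{\mathbb C}\cong M^{\oplus m_{j}}$, so $E_{2}^{i,j}(M)\cong \operatorname{Ext}^{i}_{({\mathfrak g},{\mathfrak g}_{\0})}(M,M)^{\oplus m_{j}}$, with $R$ acting through the $\operatorname{Ext}$-factor.

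Next I would argue that this spectral sequence collapses at $E_{2}$. The key point is that $E_{r}(M)$ is a module over the coefficient spectral sequence $E_{r}({\mathbb C})$ of hypothesis~(b), via the Yoneda action of $\operatorname{Ext}^{\bullet}_{({\mathfrak b},{\mathfrak b}_{\0})}({\mathbb C},{\mathbb C})$ on $\operatorname{Ext}^{\bullet}_{({\mathfrak b},{\mathfrak b}_{\0})}(M,M)$, and the differentials $d_{r}^{M}$ satisfy the Leibniz rule with respect to this action. At $r=2$ the multiplication $E_{2}^{i,0}(M)\otimes E_{2}^{0,j}({\mathbb C})\to E_{2}^{i,j}(M)$ identifies with the natural iso $\operatorname{Ext}^{i}_{G}(M,M)\otimes R^{j}\operatorname{ind}_{B}^{G}{\mathbb C}\xrightarrow{\cong}\operatorname{Ext}^{i}_{G}(M,M\otimes R^{j}\operatorname{ind}_{B}^{G}{\mathbb C})$ (an iso because $R^{j}\operatorname{ind}_{B}^{G}{\mathbb C}$ is $G$-trivial), so $E_{2}(M)$ is generated over $E_{2}({\mathbb C})$ by its bottom row $E_{2}^{*,0}(M)=\operatorname{Ext}^{\bullet}_{({\mathfrak g},{\mathfrak g}_{\0})}(M,M)$. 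By~(b) all differentials of $E_{r}({\mathbb C})$ vanish for $r\geq 2$, and $d_{r}^{M}$ vanishes on $E_{r}^{*,0}(M)$ for bidegree reasons (the target $E_{r}^{*+r,1-r}$ is zero for $r\geq 2$). Leibniz then forces $d_{r}^{M}$ to vanish on the whole $E_{r}({\mathbb C})$-span of $E_{r}^{*,0}(M)$, i.e., on all of $E_{r}(M)$; induction on $r$ shows that the generation property propagates to each subsequent page, so the $M$-spectral sequence collapses at $E_{2}$.

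Finally, the collapse yields an isomorphism of $R$-modules
$$\operatorname{gr}^{\bullet}\operatorname{Ext}^{\bullet}_{({\mathfrak b},{\mathfrak b}_{\0})}(M,M)\cong \operatorname{Ext}^{\bullet}_{({\mathfrak g},{\mathfrak g}_{\0})}(M,M)\otimes_{\mathbb C} {\mathbb C}[W_{\1}]_{\bullet},$$
with $R$ acting only on the first tensor factor. The inclusion $J_{({\mathfrak g},{\mathfrak g}_{\0})}(M)\subseteq J_{({\mathfrak b},{\mathfrak b}_{\0})}(M)$ is immediate from $R$-linearity of the restriction map $\operatorname{Ext}^{\bullet}_{({\mathfrak g},{\mathfrak g}_{\0})}(M,M)\to \operatorname{Ext}^{\bullet}_{({\mathfrak b},{\mathfrak b}_{\0})}(M,M)$ applied to $\operatorname{Id}_{M}$. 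Conversely, any $r\in J_{({\mathfrak b},{\mathfrak b}_{\0})}(M)$ annihilates the associated graded, and since $R$ acts only on the left factor with ${\mathbb C}[W_{\1}]_{\bullet}\neq 0$, this forces $r\cdot \operatorname{Ext}^{\bullet}_{({\mathfrak g},{\mathfrak g}_{\0})}(M,M)=0$, i.e., $r\in J_{({\mathfrak g},{\mathfrak g}_{\0})}(M)$. Hence $J_{({\mathfrak b},{\mathfrak b}_{\0})}(M)=J_{({\mathfrak g},{\mathfrak g}_{\0})}(M)$ and $\operatorname{res}^{*}$ is an isomorphism. The main technical obstacle will be rigorously establishing the multiplicative (module) structure together with the Leibniz rule on the Grothendieck-type spectral sequence of Proposition~\ref{P:spectralseq}; this is standard when the spectral sequence is built from Cartan--Eilenberg resolutions compatible with tensor products, but requires careful bookkeeping of signs and parities in the relative super setting.
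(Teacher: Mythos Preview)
Your argument is correct and follows essentially the same route as the paper: both set up the spectral sequence of Proposition~\ref{P:spectralseq} for $M$, use the module structure over the coefficient spectral sequence $E_r({\mathbb C})$ together with Leibniz to push the collapse hypothesis~(b) from ${\mathbb C}$ to $M$, and then compare annihilators.

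The one substantive difference is in how much collapse is claimed. You prove \emph{full} collapse of the $M$-spectral sequence by observing that $E_{2}(M)$ is generated over $E_{2}({\mathbb C})$ by the bottom row $E_{2}^{*,0}(M)$, on which $d_{r}$ vanishes for bidegree reasons; this gives the clean associated-graded description $\operatorname{gr}\operatorname{Ext}^{\bullet}_{({\mathfrak b},{\mathfrak b}_{\0})}(M,M)\cong \operatorname{Ext}^{\bullet}_{({\mathfrak g},{\mathfrak g}_{\0})}(M,M)\otimes \bigoplus_{j}{\mathbb C}^{m_{j}}$. The paper is more economical: it does not assert full collapse of $\bar E_{r}$, but only tracks the image of the submodule $[R/J_{M}]^{\oplus m_{j}}\hookrightarrow \bar E_{2}^{\bullet,j}$ under the maps $\rho_{j}$ and shows, via the same compatibility $\rho_{j}(d_{r}x)=\bar d_{r}(\rho_{j}x)$, that this submodule survives to $E_{\infty}$. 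Either version suffices for the annihilator comparison; yours yields a slightly stronger intermediate statement, while the paper's avoids having to check that the generation property propagates through pages. One cosmetic remark: writing the second tensor factor as ${\mathbb C}[W_{\1}]_{\bullet}$ implicitly invokes Theorem~\ref{T:paraproperties}; for the annihilator argument you only need that this graded vector space is nonzero, which is immediate from $m_{0}\geq 1$.
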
 

\begin{proof} Let $M$ be a finite-dimensional ${\mathfrak g}$-module. By assumption, $R^{j}\operatorname{ind}_{B}^{G} {\mathbb C}\cong {\mathbb C}^{\oplus m_{j}}$ for $j>0$. 
Using the tensor identity, $R^{j}\operatorname{ind}_{B}^{G} M\cong [R^{j}\operatorname{ind}_{B}^{G} {\mathbb C}]\otimes M$, one has two spectral sequences: 
\begin{equation} \label{ss:C}
E_{2}^{i,j}=\operatorname{Ext}^{i}_{({\mathfrak g},{\mathfrak g}_{\0})}({\mathbb C}, {\mathbb C}^{\oplus m_{j}})  \Rightarrow 
\operatorname{Ext}^{i+j}_{({\mathfrak b},{\mathfrak b}_{\0})}({\mathbb C},{\mathbb C}),
\end{equation}
\begin{equation} \label{ss:M}
\bar{E}_{2}^{i,j}=\operatorname{Ext}^{i}_{({\mathfrak g},{\mathfrak g}_{\0})}({\mathbb C}, [M^{*}\otimes M]^{\oplus m_{j}})  \Rightarrow 
\operatorname{Ext}^{i+j}_{({\mathfrak b},{\mathfrak b}_{\0})}(M,M).
\end{equation}
The spectral sequence (\ref{ss:C}) acts on (\ref{ss:M}) in the following way. There exists a natural map of ${\mathbb C}$-algebras 
$\rho:\operatorname{H}^{\bullet}({\mathfrak b},{\mathfrak b}_{\0},{\mathbb C})\rightarrow \operatorname{Ext}^{\bullet}_{({\mathfrak b},{\mathfrak b}_{\0})}(M,M)$ 
that is defined by taking an extension class in $\operatorname{H}^{\bullet}({\mathfrak b},{\mathfrak b}_{\0},{\mathbb C})$ and tensoring the class by $M$. 
Set $\widehat{J}_{{\mathfrak b},M}=\text{Ann}_{R} \operatorname{Ext}^{\bullet}_{({\mathfrak b},{\mathfrak b}_{\0})}(M,M)$. Then one has an 
injective ring homomorphism
\begin{equation} 
\rho:\operatorname{H}^{\bullet}({\mathfrak b},{\mathfrak b}_{\0},{\mathbb C})/ \widehat{J}_{{\mathfrak b},M}\hookrightarrow \operatorname{Ext}^{\bullet}_{({\mathfrak b},{\mathfrak b}_{\0})}(M,M).
 \end{equation} 
For $j\geq 0$, there also exist maps on the direct sum of algebras: 
\begin{equation} 
\rho_{j}: \operatorname{H}^{\bullet}({\mathfrak g},{\mathfrak g}_{\0},{\mathbb C})^{\oplus m_{j}}\rightarrow \operatorname{Ext}^{\bullet}_{({\mathfrak g},{\mathfrak g}_{\0})}(M,M)^{\oplus m_{j}}
\end{equation} 
with 
\begin{equation} 
\rho_{j}: [R/J_{M}]^{\oplus m_{j}}\hookrightarrow \operatorname{Ext}^{\bullet}_{({\mathfrak g},{\mathfrak g}_{\0})}(M,M)^{\oplus m_{j}}.
\end{equation} 
Furthermore, there is a compatibility of differentials: 
\begin{equation}
\rho_{j}(d_{r}(x))=\bar{d}_{r}(\rho_{j}(x)).
\end{equation} 
Since (\ref{ss:C}) collapses, $d_{r}(x)=0$ for $r\geq 2$, thus $\bar{d}_{r}(\rho_{j}(x))=0$ for $r\geq 2$, $j\geq 0$. Therefore, 
the differentials on $[R/J_{M}]^{\oplus m_{j}}$ in (\ref{ss:M}) are zero, and 
$\operatorname{Ext}^{\bullet}_{({\mathfrak b},{\mathfrak b}_{\0})}(M,M)$ contains a copy of the module 
$\oplus_{j\geq 0} [R/J_{M}]^{\oplus m_{j}}$. 

Now suppose that $y\in R$ annihilates $\operatorname{Ext}^{\bullet}_{({\mathfrak b},{\mathfrak b}_{\0})}(M,M)$. Then 
$y$ annihilates $R/J_{M}$ so $y\in J_{M}$. Consequently, $\text{Ann}_{R} \operatorname{Ext}^{\bullet}_{({\mathfrak b},{\mathfrak b}_{\0})}(M,M) \subseteq J_{M}$, 
and ${\mathcal V}_{({\mathfrak g},{\mathfrak g}_{\0})}(M)\subseteq \widehat{{\mathcal V}}_{({\mathfrak b},{\mathfrak b}_{\0})}(M)$. The other inclusion 
holds by looking at the action of $R$ on the spectral sequence (\ref{ss:M}) [e.g., if $R$ annihilates $\bar{E}_{2}$, then it annihilates the abutment]. 
Hence, ${\mathcal V}_{({\mathfrak g},{\mathfrak g}_{\0})}(M)= \widehat{{\mathcal V}}_{({\mathfrak b},{\mathfrak b}_{\0})} (M)$. 
\end{proof} 

\subsection{Proof of Theorem~\ref{T:isosupports} } For ${\mathfrak g}={\mathfrak p}(n)$, the first isomorphism in Theorem~\ref{T:isosupports}(b) can be deduced from 
\cite[Theorem 5.1.1(a)]{LNZ} since $P(n)$ is type I. Now assume that ${\mathfrak g}\neq P(n)$, then the first isomorphism in Theorem~\ref{T:isosupports}(b) follows from Theorem~\ref{T:isosupportsfe}. 
Therefore, it suffices to prove that $\operatorname{res}^{*}:{\mathcal V}_{(\f,\f_{\0})}(M)/N \rightarrow {\mathcal V}_{(\g,\g_{\0})}(M)$ is an isomorphism. 
From Theorem~\ref{T:gbsupports} $\operatorname{res}^{*}:\widehat{{\mathcal V}}_{({\mathfrak b},{\mathfrak b}_{\0})}(M) \rightarrow {\mathcal V}_{(\g,\g_{\0})}(M)$ is an isomorphism. 
The statement of the theorem now follows by applying Theorem~\ref{T:btsupports}(b)(d).

\section{Tables for BBW parabolics and Poincar\'e series} 

\subsection{BBW Parabolics}\label{SS:defineBBW} The following tables provide a reference for the construction of BBW parabolic 
subalgebras. In these tables, the roots for the detecting subalgebras and the BBW parabolics are given as well 
as the defining hyperplanes. One should note that although ${\Phi}_{{\mathfrak f}_{\1}} = \emptyset$ for ${\mathfrak g} = {\mathfrak q}(n), \mathfrak{psq}(n)$ in  Table \ref{T:detectingsub}, the algebra ${\mathfrak f}_{\1}$ is not trivial, and it equals the odd part of the Cartan subalgebra of ${\mathfrak g}$. Also, for convenience, the obvious restrictions for the indexes are not included in some cases. For example, in Table \ref{T:BBW-roots}, the restrictions of the indexes for ${\Phi}_{{\mathfrak f}_{\1}}$ when ${\mathfrak g} = \mathfrak{osp}(2m|2n)$ should be $i < j$, $1 \leq i, k \leq m$, $1 \leq j, \ell \leq n$, but we just write  $i < j$. 

\bigskip 
\renewcommand{\arraystretch}{1.2}
\begin{table}[htp]
\caption{Roots for the detecting subalgebras}
\begin{center}
\begin{tabular}{cc}
${\mathfrak g}$ &  ${\Phi}_{{\mathfrak f}_{\1}}$ \\ \hline
$\mathfrak{gl}(m|n), \mathfrak{sl}(m|n)$ [$m\leq n$] &     $\{ \pm\ ( \epsilon_i - \delta_i ) \; | \; 1 \leq i \leq m \}$           \\   
$\mathfrak{osp}(2m|2n)$ &          $\{  \pm\ (  \epsilon_i - \delta_i )\; | \; 1 \leq i \leq \min (m,n) \}$                                                                    \\  
 $\mathfrak{osp}(2m+1|2n)$  [$m\geq n$]  &         $\{  \pm\ (  \epsilon_i - \delta_i )\; | \; 1 \leq i \leq n \}$                                          \\  
  $\mathfrak{osp}(2m+1|2n)$  [$m <  n$]  &     $\{  \pm\ (  \epsilon_i - \delta_i ) \; | \; 1 \leq i \leq m \}$          \\                                                                     
${\mathfrak q}(n), \mathfrak{psq}(n)$ &  $\emptyset$\\
$ D(2,1,\alpha)$   &  $\{\pm\ (\epsilon, -\epsilon, \epsilon)\}$ \\
$G(3)$  &  $\{\pm\ (\omega_{1},-\epsilon)\}$ \\
$F(4)$  &  $\{\pm\ (\omega_{3},-\epsilon)\}$ \\
\end{tabular}
\end{center}
\label{T:detectingsub}
\end{table}

\renewcommand{\arraystretch}{1.2}
\begin{table}[htp]
\caption{Hyperplanes of BBW parabolics}
\begin{center}
\begin{tabular}{cc}
${\mathfrak g}$ &  $\mathcal H$ \\ \hline
$\mathfrak{gl}(m|n), \mathfrak{sl}(m|n)$ [$m\leq n$] &  $\sum_{i=1}^n x_i (E_i + D_i)$,  $x_1>x_2>\cdots > x_n$    \\   
$\mathfrak{osp}(2m|2n)$ &  $\sum_{i=1}^{r} x_i (E_i + D_i)$,  $x_1>x_2>\cdots > x_r > 0$ [$r = \max (m,n)$]\\  
 $\mathfrak{osp}(2m+1|2n)$ & $ \sum_{i=1}^{r} x_i (E_i + D_i)$,  $x_1>x_2>\cdots > x_r > 0$ [$r = \max (m,n)$] \\                                                                     
${\mathfrak q}(n), \mathfrak{psq}(n)$ &  $\sum_{i=1}^n x_i E_i $,  $x_1>x_2>\cdots > x_n$ \\
$ D(2,1,\alpha)$   &    $x_1E_1 +(x_1+x_3)E_2 + x_3 E_3$,  $x_1 > x_3 > 0$                                   \\
$G(3)$  &                 $x_1L_1 +x_2 L_2 + x_1 E$,  $2x_1 > x_2 > x_1>0$                                         \\
$F(4)$  &                  $x_1L_1 +x_2 L_2 + x_3L_3 + x_3 E$,  $2x_1 > x_3> x_2 > x_1>0$                                \\
\end{tabular}
\end{center}
\label{T:BBW-hyperplanes}
\end{table}

\renewcommand{\arraystretch}{1.2}
\begin{table}[htp]
\caption{Roots of BBW parabolics}
\begin{center}
\begin{tabular}{cc}
${\mathfrak g}$ &  ${\Phi}_{\1}^{-}$ \\ \hline
$\mathfrak{gl}(m|n), \mathfrak{sl}(m|n)$ [$m\leq n$] &  $\{- \epsilon_i + \delta_j, - \delta_i + \epsilon_j \; | \; i < j \} $    \\   
$\mathfrak{osp}(2m|2n)$ &  $\{- \epsilon_i + \delta_j, - \delta_i + \epsilon_j, - \epsilon_k - \delta_{\ell}, \; | \; i < j \}$\\  
 $\mathfrak{osp}(2m+1|2n)$  [$m\geq n$]  & $ \{ - \epsilon_i + \delta_j, -\delta_i + \epsilon_j, - \epsilon_k - \delta_{\ell},   - \delta_{t} \; | \; i < j\}$ \\  
  $\mathfrak{osp}(2m+1|2n)$  [$m <  n$]  &  $ \{ - \epsilon_i + \delta_j, -\delta_i + \epsilon_j, - \epsilon_k - \delta_{\ell},   - \delta_{t} \; | \; i < j, t \leq m\}$ \\                                                                     
${\mathfrak q}(n), \mathfrak{psq}(n)$ & $ \{ - \epsilon_i + \epsilon_j  | \; i < j \}$ \\
$ D(2,1,\alpha)$   &  $\{(-\epsilon,-\epsilon,-\epsilon), (-\epsilon,-\epsilon,\epsilon), (\epsilon,-\epsilon,-\epsilon)\}$ \\
$G(3)$  &  $\{(-\omega_{1}+\omega_{2},-\epsilon), (2\omega_{1}-\omega_{2},-\epsilon),  (0,-\epsilon), (\omega_{1}-\omega_{2},-\epsilon),$ \\
              &  $(-2\omega_{1}+\omega_{2},-\epsilon), (-\omega_{1}, -\epsilon) \}$ \\
$F(4)$  &  $\{(\omega_{2}-\omega_{3},-\epsilon), (\omega_{1}-\omega_{2}+\omega_{3},-\epsilon),  (\omega_{1}-\omega_{3},-\epsilon),$ \\ 
              &  $(-\omega_{2}+\omega_{3},-\epsilon), (-\omega_{1}+\omega_{2}-\omega_{3}, -\epsilon),(-\omega_{1}+\omega_{3},-\epsilon),(-\omega_{3},-\epsilon) \}$ \\                                            
\end{tabular}
\end{center}
\label{T:BBW-roots}
\end{table}

\newpage

\subsection{Poincar\'e Series} For the parabolic subalgebras ${\mathfrak b}$ given in Section~\ref{SS:defineBBW}, the table below provides gives 
a description of the cohomology $\operatorname{H}^{\bullet}({\mathfrak b},{\mathfrak b}_{\0},{\mathbb C})$ and relationship between 
$z_{{\mathfrak b},{\mathfrak g}}(t)$ with $p_{W_{\1}}(t)$. 

\renewcommand{\arraystretch}{1.2}
\begin{table}[htp]
\caption{Cohomology and Hilbert Series}
\begin{center}
\begin{tabular}{cccc}\label{T:Poincareseries}
${\mathfrak g}$ &  $W_{\1}$ &$\operatorname{H}^{\bullet}({\mathfrak b},{\mathfrak b}_{\0},{\mathbb C})$ & $z_{{\mathfrak b},{\mathfrak g}}(t)$ \\ \hline
$\mathfrak{gl}(m|n)$ [$m\geq n$] &  $\Sigma_{n}$   & ${\mathbb C}[x_{1}y_{1},x_{2}y_{2},\dots,x_{n}y_{n}]         $ & $p_{W_{\1}}(t^{2})$      \\
$\mathfrak{sl}(m|n)$ [$m> n$] &  $\Sigma_{n}$   & ${\mathbb C}[x_{1}y_{1},x_{2}y_{2},\dots,x_{n}y_{n}]         $ & $p_{W_{\1}}(t^{2})$      \\
$\mathfrak{sl}(n|n)$ &  $\Sigma_{n}$   & ${\mathbb C}[x_{1}y_{1},x_{2}y_{2},\dots,x_{n}y_{n},x_{1}x_{2}\dots x_{n},y_{1}y_{2}\dots y_{n}]$  & $p_{W_{\1}}(t^{2})$        \\                                                   
$\mathfrak{psl}(n|n)$ & $\Sigma_{n}$  & ${\mathbb C}[x_{1}y_{1},x_{2}y_{2},\dots,x_{n}y_{n},x_{1}x_{2}\dots x_{n},y_{1}y_{2}\dots y_{n}]$  & $p_{W_{\1}}(t^{2})$        \\                                                                            
${\mathfrak q}(n)$ & $\Sigma_{n}$ &  ${\mathbb C}[z_{1},z_{2},\dots, z_{n}]$         & $p_{W_{\1}}(t)$  \\
$\mathfrak{psq}(n)$ &  $\Sigma_{n}$ & ${\mathbb C}[z_{1},z_{2},\dots, z_{n},z_{1}z_{2}\dots z_{n}]/(z_{1}+z_{2}+\dots+ z_{n})$          & $p_{W_{\1}}(t)$  \\                                  
$\mathfrak{osp}(2m+1|2n)$ & $\Sigma_{r}\ltimes ({\mathbb Z}_{2})^{r}$ & ${\mathbb C}[x_{1}y_{1},x_{2}y_{2},\dots,x_{r}y_{r}] $  [$r=\text{min}(m,n)$]      & $p_{W_{\1}}(t^{2})$  \\
$\mathfrak{osp}(2m|2n)$ [$m>n$] & $\Sigma_{n}\ltimes ({\mathbb Z}_{2})^{n}$ & ${\mathbb C}[x_{1}y_{1},x_{2}y_{2},\dots,x_{n}y_{n}] $      & $p_{W_{\1}}(t^{2})$ \\
$\mathfrak{osp}(2m|2n)$ [$m\leq n$] & $\Sigma_{m}\ltimes ({\mathbb Z}_{2})^{m-1}$ & ${\mathbb C}[x_{1}y_{1},x_{2}y_{2},\dots,x_{m}y_{m}] $        & $p_{W_{\1}}(t^{2})$   \\
$D(2,1,\alpha)$                                 & $\Sigma_{2}$ &  ${\mathbb C}[xy]$                             & $p_{W_{\1}}(t^{2})$        \\                                                   
$G(3)$                                             & $\Sigma_{2}$ &  ${\mathbb C}[xy]$                             & $p_{W_{\1}}(t^{2})$        \\                                                      
$F(4)$                                             & $\Sigma_{2}$ &  ${\mathbb C}[xy]$                             & $p_{W_{\1}}(t^{2})$        \\                                                      
\end{tabular}
\end{center}
\label{T:dimensions}
\end{table}

\newpage


\begin{thebibliography}{99999999}
\bibliographystyle{amsalpha}

\bibitem[AS]{AS} G. Arvunin, L. Scott, {Quillen stratification for modules}, 
{\em Invent. Math.}, \textbf{66} (1982), 277--286. 

\bibitem[BaKN]{BaKN}
I.  Bagci, J.R.  Kujawa, D.K. Nakano, {Cohomology and
  support varieties for the Lie superalgebra $W(n)$}, {\em International Math. Research
Notices}, doi:10.1093/imrn/rnn115, (2008).

\bibitem[Bal]{Bal} P. Balmer, {The spectrum of prime ideals in tensor triangulated
categories}, {\em J. Reine Angew. Math.}, \textbf{588} (2005), 149--168.


\bibitem[BKN1]{BKN1}
B.D. Boe, J.R.  Kujawa, D.K. Nakano, {Cohomology and
  support varieties for {L}ie superalgebras}, {\em Transactions of the  AMS}, 
  \textbf{362} (2010), 6551-6590. 

\bibitem[BKN2]{BKN2}
\bysame, {\em {Cohomology and support varieties for Lie superalgebras II}},
{\em Proc. London Math. Soc.}, \textbf{98} (2009), no.~1, 19--44.


\bibitem[BKN3]{BKN3}
\bysame, {{Complexity and module varieties for classical Lie superalgebras}},
{\em International Math. Research Notices}, doi:10.1093/imrn/rnq090, (2010). 

\bibitem[BKN4]{BKN4}
\bysame, {{Tensor triangular geometry for classical Lie superalgebras}},
{\em Advances in Math.}, \textbf{314} (2017), 228-277.

\bibitem[BruKl]{BruKl} 
J. Brundan, A. Kleshchev, {Modular representations of the supergroup $Q(n)$ I}, 
{\em J. Algebra}, \textbf{260} (2003), 64-98. 

\bibitem[Bru]{Bru} 
J. Brundan, {Modular representations of the supergroup $Q(n)$ II}, 
{\em Pacific J. Math.}, \textbf{224} (2006), 65--90. 

\bibitem[DK]{dadokkac}
J. Dadok and V.~G. Kac, {Polar representations}, {\em J. Algebra}, \textbf{92}
  (1985), no.~2, 504--524.

\bibitem[GY]{GY} D. Grantcharov, M. Yakimov, {Cominuscule parabolics of simple finite-dimensional Lie superalgebras}, 
{\em J. Pure Appl. Algebra}, \textbf{217} (2013), 1844--1863.
 
\bibitem[GrS1]{GrS1} C. Gruson, V. Serganova, Cohomology of generalized supergrassmannians and character formulae for basic classical Lie superalgebras, 
{\em Proc. Lond. Math. Soc.}, (3) \textbf{101} (2010), 852--892.

\bibitem[GrS2]{GrS2}  C. Gruson, V.  Serganova, Bernstein-Gelfand-Gelfand reciprocity and indecomposable projective modules for classical algebraic supergroups, {\em Mosc. Math. J.}, \textbf{13} (2013), 281--313.
 
\bibitem[Hum1]{Hum1} J.E.  Humphreys, {\em Introduction to Lie Algebras and Representation Theory}, Springer-Verlag, 1972. 

\bibitem[Hum2]{Hum} J.E.  Humphreys, {\em Reflection Groups and Coxeter Groups}, Cambridge Studies in Advanced Mathematics, Vol. 29, 
Cambridge University Press, 1990. 

\bibitem[Jan]{Jan} J. Jantzen, {\em Representations of
Algebraic Groups}, Second Edition, Mathematical Surveys and Monographs, Vol. 107, American Mathematical Society, Providence RI, 2003. 

\bibitem[Kac]{Kac} V. G. Kac, Lie superalgebras, {\em Adv. Math.}  \textbf{26} (1977), no. 1, 8--96.

\bibitem[Kuj]{K} J. Kujawa, The generalized Kac-Wakimoto conjecture and support varieties for the 
Lie superalgebra $\mathfrak{osp}(m|2n)$, {\em Proc. Symp. Pure Math.}, \textbf{86}, (2012), 201--215. 

\bibitem[Kum]{Kum} S. Kumar, {\em Kac-Moody Groups, their Flag Varieties and Representation Theory}, 
Progress in Mathematics, Vol. 204, Birkha{\"u}ser, Boston, MA, 2002. 

\bibitem[LNZ]{LNZ} G.I.  Lehrer, D.K.  Nakano, R. Zhang, 
{Detecting cohomology for Lie superalgebras}, {\em Advances in Math.},  
\textbf{228} (2011), 2098--2115.

\bibitem[LR]{luna}
D.~Luna and R.~W. Richardson, {A generalization of the {C}hevalley
  restriction theorem}, {\em Duke Math. J.},  \textbf{46} (1979), no.~3, 487--496.

\bibitem[P]{P}  I. Penkov, Borel-Weil-Bott theory for classical Lie supergroups, (Russian) Translated in {\em J. Soviet Math.} \textbf{51} (1990), 2108--2140.

\bibitem[PS]{PS}  I. Penkov, V. Serganova, Characters of irreducible $G$-modules and cohomology of $G/P$ for the Lie supergroup $G = Q(N)$,  
{\em J. Math. Sci. (New York)} \textbf{84} (1997), 1382--1412.

\bibitem[Q1]{quillen1} D. Quillen, The spectrum of an equivariant cohomology ring I, {\em Annals of Math.}, 
\textbf{94} (1971), 549--572. 

\bibitem[Q2]{quillen2} D. Quillen, The spectrum of an equivariant cohomology ring II, {\em Annals of Math.}, 
\textbf{94} (1971), 573--602. 

\bibitem[S1]{S1}  V.  Serganova, Kazhdan-Lusztig polynomials and character formula for the Lie superalgebra $gl(m|n)$, {\em Selecta Math. (N.S.)} \textbf{2} (1996), 607--651.

\bibitem[S2]{S2} V.  Serganova, Finite-dimensional representations of algebraic supergroups, {\em Proceedings of  International Congress of Mathematicians, Seoul, 2014}, 603--632.

\bibitem[UGA]{UGAVIGRE} 
University of Georgia VIGRE Algebra Group, {On Kostant's theorem for Lie algebra cohomology}, 
{\em Cont. Math.}, \textbf{478}, (2009), 39--60. 

\bibitem[Zub]{Zubkov} A.N. Zubkov, Some properties of general linear supergroups and of Schur superalgebras, {\em Algebra Logic} \textbf{45} (2006), no. 3, 147-171. 
\end{thebibliography}
\end{document}